\documentclass[a4paper]{amsart}
\usepackage{mathmarco}

\usepackage[maxalphanames=99,url=false,maxbibnames=99,giveninits=true]{biblatex}
\AtEveryBibitem{
  \ifentrytype{article}{\clearfield{doi}\clearfield{issn}\clearfield{isbn}}{}
  \ifentrytype{incollection}{\clearfield{doi}\clearfield{issn}\clearfield{isbn}}{}
  \ifentrytype{book}{\clearfield{doi}\clearfield{issn}\clearfield{isbn}}{}
}

\usepackage{etoolbox}

\makeatletter

\@ifpackageloaded{hyperref}{
  \@ifpackageloaded{biblatex}{
    \DeclareRobustCommand{\SkipTocEntry}[5]{}
  }
}{}

\let\origsubsection\subsection
\renewcommand{\subsection}{
  \@ifstar{\subsection@star}{\origsubsection}%
}
\newcommand{\subsection@star}[1]{
  \addtocontents{toc}{\protect\SkipTocEntry}
  \origsubsection*{#1}
}

\let\origsection\section
\renewcommand{\section}{
  \@ifstar{\section@star}{\origsection}%
}
\newcommand{\section@star}[1]{
  \addtocontents{toc}{\protect\SkipTocEntry}
  \origsection*{#1}
}

\newcommand{\addresseshere}{
  \enddoc@text\let\enddoc@text\relax
}

\renewcommand*\env@matrix[1][\arraystretch]{%
  \edef\arraystretch{#1}%
  \hskip -\arraycolsep
  \let\@ifnextchar\new@ifnextchar
  \array{*\c@MaxMatrixCols c}}

\makeatother

\title[Optimal Hypercontractivity and Log--Sobolev inequalities on Cyclic Groups $\mathbb{Z}_{n}$]{A Complete Solution of Optimal Hypercontractivity and Log--Sobolev inequalities on Cyclic Groups $\mathbb{Z}_{n}$ for $n\geq 4$}

\date{\today}

\author{Gan Yao}

\address[Gan Yao]{Institute for Advanced Study in Mathematics, Harbin Institute of Technology,  Harbin 150001, China.}
\email{gan.yao.math@gmail.com}
\begin{document}

\begin{abstract}
For $1<p\le q<\infty$ and $n\geq 4$, we prove that the Poisson-like semigroup $(P_t)_{t\in \mathbb{R}_+}$ on $\mathbb{Z}_n$, associated with the word length $\psi_n(k)=\min(k,n-k)$, is hypercontractive from $L_p$ to $L_q$ if and only if $t\ge \tfrac{1}{2}\log\big(\tfrac{q-1}{p-1}\big)$. To this end, we establish the corresponding sharp Log--Sobolev inequalities with the optimal constant $2$.
\end{abstract}
\keywords{Hypercontractivity, Poisson-like semigroup, Cyclic group}
\maketitle
\tableofcontents
\section{Introduction}
Let $\mathbb{Z}_n=\{0,1,\dots,n-1\}$ denote the finite cyclic group equipped with the normalized counting measure $\mu_n$. The Poisson-like semigroup $(P_t)_{t\in \mathbb{R}_+}$ applied to $f(x)=\sum_{k=0}^{n-1} a_k\chi_k(x)\in L_\infty(\mathbb{Z}_n,\mu_n)$ is given by
\begin{equation}\label{Eqn: Formula of P_t}
  (P_tf)(x)=\sum_{k=0}^{n-1} e^{-t\psi_n(k)}a_k\chi_k(x),
\end{equation}
where $\psi_n(k)=\min(k,n-k)$ is the word-length function on $\mathbb{Z}_n$, and $\chi_k(x)=e^{\frac{2\pi ikx}{n}}\in L_\infty(\mathbb{Z}_n)$. The hypercontractivity problem is to determine the optimal time $t_{p,q}$ with $1< p\leq q<\infty$ such that
\[
    \norm{P_t f}_{q}\le \norm{f}_{p}\qquad \text{for all } t\ge t_{p,q}.
\]
A standard series expansion of $\norm{P_t(1+\epsilon f)}_q$ and $\norm{1+\epsilon f}_p$ at $\epsilon=0$ shows the universal lower bound 
\[
  t_{p,q}\ge \tfrac{1}{2}\log\big(\tfrac{q-1}{p-1}\big).
\]\par 
The main result of this article is that this bound is sharp for every $n\ge 4$.
\begin{theorem}\label{Thm: Hypercontractivity on n}
  For $n\geq 4$, we have 
  \[
    \norm{P_{t}f}_q\le \norm{f}_p\quad \Leftrightarrow \quad t\ge \frac{1}{2}\log(\frac{q-1}{p-1})
    \]
    for $1<p\le q<\infty$.
\end{theorem}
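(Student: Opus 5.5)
Necessity follows from the universal lower bound recalled above. Take $f=1+\epsilon g$ with $g=\chi_1+\chi_{-1}$, which is real, has mean zero and satisfies $\psi_n(\pm1)=1$. Expanding both norms to second order in $\epsilon$ gives
\[
\norm{P_t(1+\epsilon g)}_q^2\approx 1+(q-1)e^{-2t}\epsilon^2\norm{g}_2^2,
\qquad
\norm{1+\epsilon g}_p^2\approx 1+(p-1)\epsilon^2\norm{g}_2^2 .
\]
The inequality therefore forces $e^{-2t}(q-1)\le p-1$. So the whole content lies in sufficiency.

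For sufficiency I will use Gross's equivalence between hypercontractivity and the log--Sobolev inequality, as announced in the abstract. First I reduce to nonnegative $f$. The semigroup $P_t$ is convolution with the kernel $K_t(x)=\sum_k e^{-t\psi_n(k)}\chi_k(x)$, and I claim $K_t\ge 0$. Indeed, $\psi_n$ is conditionally negative definite on $\mathbb{Z}_n$, because the word length on a cycle is a graph metric of a circle and embeds in $L^1$. By Schoenberg's theorem $e^{-t\psi_n}$ is then positive definite, so $P_t$ is Markov and $|P_tf|\le P_t|f|$. Gross's theorem then says that hypercontractivity with $t_{p,q}=\tfrac12\log\frac{q-1}{p-1}$ for all $1<p\le q<\infty$ is equivalent to the log--Sobolev inequality with constant $2$:
\[
\mathrm{Ent}(f^2)\le 2\,\mathcal{E}(f,f),
\qquad
\mathcal{E}(f,f)=\sum_k \psi_n(k)|a_k|^2 ,
\]
for all $f\ge 0$, or for all real $f$.

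Next I rewrite the Dirichlet form as a jump form $\mathcal{E}(f,f)=\frac12\sum_{x,y}J(x-y)(f(x)-f(y))^2\mu_n(x)$. The jump kernel $J$ is the negative of the Fourier inverse of $\psi_n$ off the identity, and it is nonnegative by conditional negative definiteness. Computing it explicitly for the triangle-wave function $\psi_n$ gives $J(m)$ proportional to $1/\sin^2(\pi m/n)$ when $n$ is even and $m$ is odd, with a similar closed formula when $n$ is odd. Then I would compare with a known optimal inequality. The natural model is the two-point (Bonami--Beckner) inequality: $\mathbb{Z}_n$ with $n$ even contains the coset structure $\mathbb{Z}_n\to\mathbb{Z}_2$, and $\psi_n$ is comparable to a sum of "bit" Laplacians. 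The plan is to show $\mathcal{E}\ge \mathcal{E}_{\rm ref}$, where $\mathcal{E}_{\rm ref}$ is a Dirichlet form whose log--Sobolev constant is known to be $2$. The candidate is the discrete circle Poisson kernel: restrict the continuous circle result (Weissler's theorem for the Poisson semigroup on $\mathbb{T}$, which has optimal time $\tfrac12\log\frac{q-1}{p-1}$) to functions on $\mathbb{Z}_n$. Alternatively I would use a tensor or convex decomposition $J=\sum_i c_iJ_i$ into complete-graph and two-point pieces, where each piece satisfies the log--Sobolev inequality with constant $\ge$ its spectral gap. Since the spectral gap here equals $\psi_n(1)=1$, the constant $2$ can only be attained if each piece is individually sharp, so the decomposition has to be chosen exactly.

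The main obstacle is this last step. The constant $2$ equals twice the inverse spectral gap, so there is no slack, and generic comparison arguments lose constants. I would therefore attempt a direct proof of the log--Sobolev inequality via a two-step reduction. The first step is to show that it suffices to check functions with $f(x)=f(-x)$, and perhaps monotone in $\psi_n(x)$ via rearrangement, using that $J$ is symmetric decreasing in $\psi_n$. The second step is to verify the resulting finite-dimensional inequality through an interpolation or "local to global" argument: prove $\frac{d}{ds}$-type inequalities along the path $P_s f$ (the Bakry--Émery-type condition fails in discrete settings, so instead use a convexity or entropy-decay argument directly with the explicit $J$). Small cases such as $n=4$, where $\mathbb{Z}_4$ with this $\psi$ is exactly the Bonami product $\mathbb{Z}_2\times\mathbb{Z}_2$ structure, and $n=5$ serve as base checks. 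For general $n$ I would try the comparison with the restriction of Weissler's circle result first, since it naturally carries the sharp constant.
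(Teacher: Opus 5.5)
Your necessity computation, the positivity of the kernel and the appeal to Gross's theorem are fine, and they match how the paper frames the problem: everything reduces to the sharp log--Sobolev inequality with constant $2$ (Theorem~\ref{Thm: Log Sobolev inequality n>=4}). \textbf{That inequality is the entire content of the theorem, and your proposal does not prove it.} Each route you sketch either runs in the wrong direction or rests on a false premise.

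\emph{Restriction from the circle.} You cannot restrict Weissler's theorem to $\mathbb{Z}_n$. Interpolating $f$ by a trigonometric polynomial $F$ does not preserve $L_p$ norms, since the Riemann sum of $|F|^p$ is not exact. Sampling the Poisson kernel gives the periodized multiplier $\sum_{m\equiv k \bmod n}e^{-t|m|}$, not $e^{-t\psi_n(k)}$. The paper uses this transfer only in the opposite direction, deducing Corollary~\ref{Coro: LSI on circle} by letting $n\to\infty$. More decisively, any transfer uniform in $n$ would also cover $n=3$, where the optimal time is strictly larger: the quoted formula gives $t_{2,4}=\frac12\log(1+\frac{3}{\sqrt2})>\frac12\log 3$. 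So a correct proof must use $n\ge 4$ somewhere, and yours never does.

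\emph{Comparison.} As you note, there is no slack: the gap is $\psi_n(1)=1$. A dominated reference form would therefore itself need gap $1$ and a sharp constant. The natural sharp candidate, the simple random walk rescaled to gap $1$, has eigenvalues $\frac{1-\cos(2\pi j/n)}{1-\cos(2\pi/n)}\ge\psi_n(j)$. So the comparison goes the wrong way; this is exactly how the paper obtains Corollary~\ref{Coro: LSI for simple random walk} \emph{from} the Poisson inequality. The $\mathbb{Z}_n\to\mathbb{Z}_2$ picture does not exist for odd $n$. Two-point pieces give sharp constants only through a product structure, which you have for $n=4$ but not in general.

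\emph{Rearrangement.} This step assumes $J$ is decreasing in $\psi_n$, which is false. By your own formula, $J(2)=0<J(3)$ for even $n\ge 6$. For odd $n$ one finds $J(z)=\frac{1}{4n\sin^2(\pi z/2n)}$ for odd $z$ and $J(z)=\frac{1}{4n\cos^2(\pi z/2n)}$ for even $z$, so again $J(2)<J(3)$ once $n\ge 7$. The closing ``entropy-decay'' step is not specified at all.

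\textbf{The missing idea is to drop comparison and rule out bad critical points directly.}
\begin{enumerate}
\item By Lemma~\ref{Lem: KKT condition absorbing mu}, it suffices to show that the KKT system for $2\langle\lambda,\Psi(n)\lambda\rangle-\mathrm{H}_n[\lambda]$ has no solution with $0<\|\lambda\|_2^2<n$.
\item Pair the KKT equation with the edge differences $\lambda_i-\lambda_{i+1}$. The logarithms then appear only through secant slopes, and the bound $\frac{a\log a-b\log b}{a-b}<\frac{a+b}{2}$ replaces these by polynomials.
\item Summing over edges, a solution would force $K_n(\lambda)=4\langle\lambda,L\Psi(n)\lambda\rangle-\frac{2}{n}\langle\lambda^2,L\lambda\rangle<0$.
\item On the Fourier side, $K_n(\lambda)\ge 16\,(Q_{n,2}+Q_{n,4}-Q_{n,3})(\tilde\lambda)$.
\item The polynomial inequality is proved by splitting each cubic monomial $x_ax_bx_c$ via AM--GM. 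The $x_a^2x_b^2$ part is absorbed by $Q_{n,4}$ and the $x_c^2$ part by $Q_{n,2}$. For odd $n\ge 7$ this needs a small reweighting at the triple $(2,m-1,m)$; $n=5$ is settled by an explicit sum of squares.
\end{enumerate}
This is precisely where $n\ge 4$ enters. Every relevant triple has $c\ge 2$, so $x_c^2$ carries the positive coefficient $(c-1)\sin^2(\pi c/n)$ in $Q_{n,2}$. For $n=3$, by contrast, $Q_{3,2}\equiv 0$ and the cubic term cannot be absorbed near $x=0$.
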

Before the present work, the known results were confined to a few special cases. For the case $n=2$, the optimal time $t_{p,q}=\tfrac{1}{2}\log\big(\tfrac{q-1}{p-1}\big)$ follows from the classical two-point inequality. That inequality was first proved by Bonami~\cite{MR283496}, and later rediscovered by Nelson~\cite{MR343816}, Gross~\cite{MR420249}, and Beckner~\cite{MR385456}. The case $n=3$ is drastically different: using Wolff's reduction~\cite[Corollary 3.1]{MR2314078}, it was established in \cite{MR1796718,MR2073432} that the optimal time $t_{2,q}$ for $P_t$ on $\mathbb{Z}_3$ is
\[
t_{2,q}=\frac{1}{2}\log\!\Bigg(\frac{\frac{2}{3}\big(\frac{1}{3}\big)^{\frac{2}{q}-1}-\frac{1}{3}\big(\frac{2}{3}\big)^{\frac{2}{q}-1}}{\big(\frac{2}{3}\big)^{\frac{2}{q}}-\big(\frac{1}{3}\big)^{\frac{2}{q}}}\Bigg),
\]
whereas the optimal time for general $p,q$ was determined recently by Cao, Fan, Han, Qiu, and Wang~\cite{cao2026optimalhypercontractiveconstantsmathbbz3}. For $n\geq 4$, a few isolated cases were studied:
\begin{itemize}
  \item $n=4$: $t_{p,q}=\tfrac{1}{2}\log\big(\tfrac{q-1}{p-1}\big)$ by Beckner, Janson, and Jerison~\cite{MR730056} in 1983. They also conjectured that this optimal time holds for all $n\neq 3$.
  \item $n=5$: $t_{2,q}=\tfrac{1}{2}\log(q-1)$ for $q\in 2\mathbb{Z}_+$ by Andersson~\cite{MR1883499} in 2002.
  \item $n\geq 6$: $t_{2,q}=\tfrac{1}{2}\log(q-1)$ for $q\in 2\mathbb{Z}_+$ for even $n$ and the same holds for odd $n$ whenever $n\geq q$ by Junge, Palazuelos, Parcet, and Perrin~\cite{MR3709719} in 2017.
  \item $n\in \{2^k,3\cdot 2^k\}$ with $k\geq 1$: $t_{p,q}=\tfrac{1}{2}\log\big(\tfrac{q-1}{p-1}\big)$ by our work \cite{yao2025optimalhypercontractivitylogsobolevinequalities} in 2025.
\end{itemize} \par 
Hypercontractivity has also been extensively studied for semigroups on manifolds. Weissler~\cite{MR578933} proved optimal hypercontractivity for the heat and Poisson semigroups on the circle $\mathbb{S}^1$, while Rothaus~\cite{MR593787} independently treated the heat case. For related results on the sphere $\mathbb{S}^n$, see \cite{MR674060,MR706641,MR1164616,MR4278128}.\par 

It is known that the word-length function $\psi_n$ is conditionally negative; see, e.g. \cite{MR3709719}. In this setting, a standard route to hypercontractivity proceeds through Log--Sobolev inequalities (LSI), following Gross's celebrated work~\cite{MR420249}: $\norm{P_{t}f}_q\le \norm{f}_p$ holds whenever $t\geq \frac{C}{4}\log\big(\frac{q-1}{p-1}\big)$ if and only if the corresponding LSI holds with constant $C$. Thus it is enough to establish the following $n$--LSI with the optimal constant $2$ for all $n\geq 4$. The case $n=4$ follows from \cite{MR730056} together with Gross's extrapolation technique~\cite{MR420249}, while $n=3$ is rather different; see \cite{MR1758914,MR1410112}.
\begin{theorem}\label{Thm: Log Sobolev inequality n>=4}
For $n\geq 4$, we have the following LSI with the optimal constant $2$:
\[
  \int_{\mathbb{Z}_n} f^2\log f^2\dd\mu_n-\norm{f}_2^2\log\norm{f}_2^2\le 2\inner{f,A_{\psi_n}f}_{L_2(\mathbb{Z}_n,\mu_n)},\qquad f\in L_2^+(\mathbb{Z}_n,\mu_n),
\]
where $A_{\psi_n}$ is the generator of the semigroup $(P_t)_{t\in\mathbb{R}_+}$. 
\end{theorem}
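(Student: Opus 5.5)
We first compute the generator. Since $\psi_n(k)=\min(k,n-k)$ is conditionally negative, $A_{\psi_n}$ is a Markov generator. Write $\psi_n(k)=\sum_{j\neq 0}c_j(1-\cos(2\pi jk/n))$ with $c_j=c_{-j}\ge 0$. Then
\[
\inner{f,A_{\psi_n}f}=\frac12\sum_{j}c_j\int_{\mathbb{Z}_n}|f(x+j)-f(x)|^2\,\dd\mu_n(x).
\]
Thus the Dirichlet form is a positive combination of the two-point Dirichlet forms along the translations $x\mapsto x+j$. The coefficients can be read off explicitly from the discrete Fourier transform of the triangle function $\psi_n$; a direct computation gives $c_j$ proportional to $1/\sin^2(\pi j/n)$ restricted to odd $j$ (up to a parity correction when $n$ is odd).

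Optimality of the constant $2$ is the routine half. Take $f=1+\epsilon\,\mathrm{Re}\,\chi_1$ and expand both sides to order $\epsilon^2$. The left side is $2\epsilon^2\norm{\mathrm{Re}\,\chi_1}_2^2+o(\epsilon^2)$, while the right side is $C\epsilon^2\psi_n(1)\norm{\mathrm{Re}\,\chi_1}_2^2$ with $\psi_n(1)=1$. Any valid constant $C$ must therefore satisfy $C\ge 2$, i.e.\ the constant $2$ cannot be improved. This is consistent with the lower bound on $t_{p,q}$ recorded in the introduction.

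For the inequality itself, I would not use plain tensorization, since $\mathbb{Z}_n$ is not a product. Instead I would follow the strategy of our earlier work for $n\in\{2^k,3\cdot2^k\}$: bound $\mathrm{Ent}(f^2)$ by a sum of entropy pieces, each controlled by a two-point (Bonami) LSI. Concretely, I would use the cyclic structure to build a family of "pairings" of $\mathbb{Z}_n$, namely the perfect matchings $x\leftrightarrow x+j$ for suitable odd $j$, or near-matchings when $n$ is odd. By the two-point LSI with constant $2$,
\[
\mathrm{Ent}_{\text{pair}}(f^2)\le \tfrac12\,\mathbb{E}_{\text{pair}}|f(x)-f(x+j)|^2 .
\]
Entropy is subadditive over a decomposition $\mathrm{Ent}(f^2)=\mathbb{E}[\mathrm{Ent}(f^2\mid\mathcal{F})]+\mathrm{Ent}(\mathbb{E}[f^2\mid\mathcal{F}])$. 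Iterating this along a chain of partitions, as for $\mathbb{Z}_{2^k}$, gives the result for the sizes already covered. For general $n$, the plan is to compare $\mathbb{Z}_n$ with a larger group $\mathbb{Z}_N$, $N=2^k m$, via a transference or "lifting" argument. Lift $f$ to $\tilde f$ on $\mathbb{Z}_{N}$ by piecewise-constant or linear interpolation, so that $\mathrm{Ent}$ is essentially preserved. Then compare the Dirichlet form of $\tilde f$ for $\psi_N$, rescaled by $N/n$, with that of $f$ for $\psi_n$. Finally pass to a limit, or use a discrete-to-continuous comparison with Weissler's sharp Poisson LSI on $\mathbb{S}^1$, whose Dirichlet form has kernel $c(\theta)\propto 1/\sin^2(\theta/2)$, matching the $c_j$ above.

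The main obstacle is exactly this comparison step: showing that the discrete Dirichlet form $\sum_j c_j\,\mathbb{E}|f(x+j)-f(x)|^2$ dominates the Poisson Dirichlet form of a suitable extension of $f$ to $\mathbb{S}^1$ (or $\mathbb{Z}_N$) without losing constants, while the entropy does not decrease. I would first try the piecewise-constant extension $F(\theta)=f(\lfloor n\theta/2\pi\rfloor)$. Its Poisson energy is $\sum_{x,y}K_n(x-y)|f(x)-f(y)|^2$ with an explicit kernel $K_n$, obtained by integrating $1/\sin^2$ over pairs of arcs. The problem then reduces to the elementary kernel inequality $K_n(j)\le \tfrac12 c_j$ for all $j$, which I would verify via convexity of $1/\sin^2$. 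If it fails for small $j$, I would split off the nearest-neighbour term and handle it with the two-point inequality directly; small $n$ such as $5,6,7$ may need a separate numerical check.
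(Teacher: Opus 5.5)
Your jump-kernel representation of $\langle f,A_{\psi_n}f\rangle$ and the second-order argument for the optimality of the constant $2$ are fine. The inequality itself, however, is never established. By your own account, the partition/two-point argument only reproduces the sizes already known, so everything rests on the transference step, and that step fails as you set it up.

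The piecewise-constant extension $F(\theta)=f(\lfloor n\theta/2\pi\rfloor)$ has \emph{infinite} Poisson energy. A function with a jump is not in $H^{1/2}(\mathbb{S}^1)$, i.e.\ $\sum_m|m||\hat F(m)|^2=+\infty$; equivalently, the double integral of $\sin^{-2}((\theta-\phi)/2)$ over two adjacent arcs diverges logarithmically. So $K_n(\pm1)=+\infty$. Every nonconstant $f$ differs at some adjacent pair of the cycle, so Weissler's inequality yields nothing. Moreover, for even $n$ your own coefficients satisfy $c_j=0$ for even $j\neq0$, while $K_n(j)>0$ there. Thus the pointwise bound $K_n(j)\le\frac12c_j$ fails there as well, and the continuous kernel does not ``match'' the $c_j$.

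Splitting off the nearest-neighbour term is not an available move. Weissler's LSI controls $\mathrm{Ent}(F^2)$ only through the full energy of $F$. You have no entropy decomposition that would let a separate two-point inequality absorb part of $\mathrm{Ent}_{\mu_n}(f^2)$.

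Other lifts do not rescue this, because the constant is sharp. Take a linear, constant-preserving extension $E$ to $\mathbb{S}^1$ or $\mathbb{Z}_N$, with Dirichlet form $\mathcal{E}$. Apply the chain $\mathrm{Ent}_{\mu_n}(f^2)\le\mathrm{Ent}(F^2)\le2\mathcal{E}(F)\le2\langle f,A_{\psi_n}f\rangle$ to $f=1+\epsilon g$ with $g=\mathrm{Re}\,\chi_1$. At order $\epsilon^2$ this gives $\mathrm{Var}(g)\le\mathrm{Var}(Eg)\le\mathcal{E}(Eg)\le\langle g,A_{\psi_n}g\rangle=\mathrm{Var}(g)$. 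Hence $Eg$ must lie exactly in the first eigenspace of the large generator. Both generators have spectral gap $1$, so there is no room for a rescaling by $N/n$ either.

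Piecewise-constant and piecewise-linear lifts violate this requirement. The lift that satisfies it, trigonometric interpolation, loses entropy. For odd $n$ and $f=\mathbf 1_{\{0\}}$, the interpolant is the normalized Dirichlet kernel, with $F^2\le1$ and $\|F\|_2^2=1/n$. Hence $\mathrm{Ent}(F^2)<\frac{\log n}{n}=\mathrm{Ent}_{\mu_n}(f^2)$.

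A sanity check explains why no soft comparison can work: the statement is false for $n=3$. There $A_{\psi_3}=I-\mathbb{E}$, and $f=(1,1,2)$ gives $\mathrm{Ent}(f^2)=\frac23\log2>\frac49=2\langle f,A_{\psi_3}f\rangle$. So any proof must use $n\ge4$ at an identifiable point. Circle-to-$\mathbb{Z}_n$ is the hard direction; the paper only passes from $\mathbb{Z}_n$ to $\mathbb{S}^1$, via Riemann sums.

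The paper's route is quite different:
\begin{itemize}
\item A violation of the LSI produces a solution of the KKT system.
\item The secant bound $\frac{a\log a-b\log b}{a-b}<\frac{a+b}{2}$, summed over adjacent pairs, forces $K_n(\lambda)<0$ at that solution.
\item Parseval gives $K_n\ge16(Q_{n,2}+Q_{n,4}-Q_{n,3})$.
\item The cubic term is absorbed into $Q_{n,4}$ and $Q_{n,2}$ by AM--GM splittings.
\end{itemize}
The place where $n\ge4$ enters is that no triple in $E_n$ has third entry $1$. For $n=3$ the term $x_1^3$ would appear, and $Q_{n,2}$ has no $x_1^2$ term to absorb it.
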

As a consequence, we recover Weissler's hypercontractive estimate and LSI for the Poisson semigroup on the circle $\mathbb{S}^1$. The Poisson semigroup $(\mathcal{P}_t)_{t\in \mathbb{R}_+}$ applied to $f(\theta)=\sum_{m=-\infty}^{\infty} a_me^{im\theta}\in L_2(\mathbb{S}^1,\frac{\dd \theta}{2\pi})$ is given by 
  \[
      (\mathcal{P}_tf)(\theta)=\sum_{m=-\infty}^{\infty} a_me^{-t\abs{m}}e^{im\theta}.
  \]
\begin{corollary}[{\cite{MR578933}}]\label{Coro: LSI on circle}
  
  We have 
  \[
    \norm{\mathcal{P}_{t}f}_{L^q(\mathbb{S}^1)}\le \norm{f}_{L^p(\mathbb{S}^1)}\quad \Leftrightarrow \quad t\ge \frac{1}{2}\log(\frac{q-1}{p-1})
    \]
    for $1<p\le q<\infty$. Equivalently, for every $f\geq 0$ almost everywhere with Fourier expansion $f(\theta)=\sum_{m=-\infty}^{\infty} a_me^{im\theta}$, we have
  \[
    \int f^2\log f\leq \sum_{m=-\infty}^{\infty} \abs{m}\abs{a_m}^2+\norm{f}_{L^2(\mathbb{S}^1)}^2\log\norm{f}_{L^2(\mathbb{S}^1)}
  \] 
  provided the right-hand side is finite.
\end{corollary}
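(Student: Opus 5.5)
We will deduce Corollary~\ref{Coro: LSI on circle} from Theorem~\ref{Thm: Log Sobolev inequality n>=4} by discretizing the circle with the subgroups $\mathbb{Z}_n\subset\mathbb{S}^1$ and letting $n\to\infty$. By Gross's equivalence, it is enough to prove the LSI on $\mathbb{S}^1$ with constant $2$. For the Poisson semigroup, $\inner{f,Af}=\sum_m\abs{m}\abs{a_m}^2$, so the LSI with constant $2$ is exactly the stated inequality after dividing by $2$. The necessity of $t\ge\frac12\log\frac{q-1}{p-1}$ follows from the same second-order expansion of $\norm{\mathcal P_t(1+\epsilon e^{i\theta})}_q$ as in the introduction.

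\textbf{Step 1: trigonometric polynomials.} First take $f\ge 0$ a trigonometric polynomial of degree $N$, $f(\theta)=\sum_{\abs m\le N}a_me^{im\theta}$. For $n>2N$, define $f_n(x)=f(2\pi x/n)$ on $\mathbb{Z}_n$. Its Fourier coefficients are $a_m$ at $k\equiv m \pmod n$, and there is no aliasing since $n>2N$. For $\abs m\le N<n/2$ we have $\psi_n(k)=\abs m$. Hence
\[
\inner{f_n,A_{\psi_n}f_n}=\sum_{\abs m\le N}\abs m\abs{a_m}^2
\]
exactly, independently of $n$. Applying Theorem~\ref{Thm: Log Sobolev inequality n>=4} to $f_n\ge 0$ gives the discrete inequality. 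The left side is a Riemann sum, $\frac1n\sum_x F(f(2\pi x/n))$ with $F(s)=s^2\log s^2$ continuous. Likewise $\norm{f_n}_2^2$ equals $\norm f_2^2$ exactly by Parseval, again because there is no aliasing. Letting $n\to\infty$ therefore yields the circle LSI for nonnegative trigonometric polynomials.

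\textbf{Step 2: density.} For a general $f\ge0$ with $\sum\abs m\abs{a_m}^2<\infty$, use Fejér means $\sigma_Nf=K_N*f$. These are nonnegative trigonometric polynomials, since the Fejér kernel is positive. Their coefficients are $(1-\abs m/(N+1))_+a_m$, so the energy $\sum\abs m\abs{\widehat{\sigma_Nf}(m)}^2$ increases to $\sum\abs m\abs{a_m}^2$ by monotone convergence. Moreover $\sigma_Nf\to f$ in $L_2$ and a.e. along a subsequence. To pass to the limit in $\int (\sigma_Nf)^2\log(\sigma_Nf)^2$, bound the negative part of $s^2\log s^2$ below by $-1/e$ and use dominated convergence. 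For the positive part, use Fatou's lemma, which is the right direction since it appears on the smaller side of the inequality. Alternatively, finite energy $H^{1/2}$ embeds into every $L^r$, which gives uniform integrability of $s^2\log s^2$. Either way the inequality passes to the limit, and Gross's theorem then gives the hypercontractivity equivalence.

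\textbf{Main obstacle.} The only delicate point is the density argument: one must keep positivity, which the Fejér kernel provides, and control the entropy term in the limit. Fatou's lemma handles the latter cleanly. Step 1 is essentially exact thanks to the choice $n>2N$.
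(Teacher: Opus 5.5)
\textbf{Gap in the necessity step.} The test function $1+\epsilon e^{i\theta}$ is complex, in fact analytic, and it only detects a weaker threshold. For $\abs{\epsilon}<1$, Parseval applied to $(1+\epsilon e^{i\theta})^{p/2}$ gives
\[
\norm{1+\epsilon e^{i\theta}}_{L^p(\mathbb{S}^1)}^p=\sum_{j\ge 0}\binom{p/2}{j}^2\epsilon^{2j}=1+\frac{p^2}{4}\epsilon^2+O(\epsilon^4).
\]
Hence $\norm{1+\epsilon e^{i\theta}}_p=1+\frac{p}{4}\epsilon^2+O(\epsilon^4)$ and $\norm{\mathcal{P}_t(1+\epsilon e^{i\theta})}_q=1+\frac{q}{4}e^{-2t}\epsilon^2+O(\epsilon^4)$. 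The second-order comparison therefore only forces $t\ge\frac12\log\frac{q}{p}$ (the analytic threshold).

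This is strictly weaker than the claimed bound whenever $q>p$, since $\frac{q-1}{p-1}-\frac{q}{p}=\frac{q-p}{p(p-1)}>0$. For $t$ between the two thresholds your test function gives no contradiction, so as written you have not proved necessity.

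The fix is to use a real perturbation, as the paper does with $1+\epsilon\sin\theta$. There $\norm{1+\epsilon\sin\theta}_p=1+\frac{p-1}{4}\epsilon^2+O(\epsilon^3)$ and $\norm{\mathcal{P}_t(1+\epsilon\sin\theta)}_q=1+\frac{q-1}{4}e^{-2t}\epsilon^2+O(\epsilon^3)$. This yields exactly $(q-1)e^{-2t}\le p-1$.

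\textbf{The sufficiency part is correct but follows a different route from the paper.} The paper discretizes the hypercontractive inequality itself:
\begin{itemize}
\item for a trigonometric polynomial $f$ and $n>2M$ it observes $P_t^{(n)}f_n=(\mathcal{P}_tf)(2\pi\cdot/n)$;
\item it applies Theorem~\ref{Thm: Hypercontractivity on n} and lets the Riemann sums converge;
\item it extends to general $f$ by density of trigonometric polynomials in $L_p$ and $L_p$-contractivity of $\mathcal{P}_t$;
\item only at the end does it obtain the LSI from Gross's theorem.
\end{itemize}
You instead discretize the LSI of Theorem~\ref{Thm: Log Sobolev inequality n>=4}. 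There the energy and the $L_2$ norm are preserved exactly, so the cost falls on the extension step. You must approximate a general $f\ge 0$ of finite energy while keeping positivity and controlling the entropy term.

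Your Fej\'er-mean argument does this correctly. Positivity of the kernel keeps $\sigma_Nf\ge0$, and the energies increase monotonically. The bound $s^2\log s^2\ge -1/e$ plus dominated convergence handles the negative part, and Fatou handles the positive part on the smaller side.

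The paper's density step is lighter, since no entropy control is needed. Yours establishes the ``equivalently'' LSI statement directly on the whole form domain, which is exactly what Gross's theorem needs to return hypercontractivity.
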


It is worth noting that Theorem~\ref{Thm: Log Sobolev inequality n>=4} also yields the sharp LSI for the simple random walk on $\mathbb{Z}_n$. Earlier exact results were known for all even $n$ and for odd $n\le 21$; see \cite{MR1410112,MR1990534,MR2487855,MR4703690}.\par 
\begin{corollary}\label{Coro: LSI for simple random walk}
Let $K$ be the Markov kernel of the simple random walk on $\mathbb{Z}_n$, given by
\[
K(x,y)=\begin{cases}
\frac{1}{2}, & y=x\pm 1,\\
0, & \text{otherwise},
\end{cases}
\]
with indices taken modulo $n$. Then, for every $n\ge 4$, we have the following LSI with the optimal constant $\frac{2}{1-\cos(\frac{2\pi}{n})}$:
\begin{equation}\label{Ineq: LSI for simple random walk}
   \int_{\mathbb{Z}_n} f^2\log f^2\dd\mu_n-\norm{f}_2^2\log\norm{f}_2^2\le \frac{2}{1-\cos(\frac{2\pi}{n})}\inner{f,(I-K)f}_{L_2(\mathbb{Z}_n,\mu_n)},\qquad f\in L_2^+(\mathbb{Z}_n,\mu_n).  
\end{equation}
\end{corollary}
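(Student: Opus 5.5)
We will deduce the statement from Theorem~\ref{Thm: Log Sobolev inequality n>=4} by comparing Dirichlet forms. For $f=\sum_k a_k\chi_k$, Parseval gives
\[
\inner{f,A_{\psi_n}f}=\sum_{k=0}^{n-1}\psi_n(k)\abs{a_k}^2,\qquad \inner{f,(I-K)f}=\sum_{k=0}^{n-1}\Big(1-\cos\big(\tfrac{2\pi k}{n}\big)\Big)\abs{a_k}^2 ,
\]
because $K\chi_k=\tfrac12(\chi_k(\cdot+1)+\chi_k(\cdot-1))=\cos(2\pi k/n)\chi_k$. The upper bound in \eqref{Ineq: LSI for simple random walk} then follows from the pointwise multiplier inequality
\[
\psi_n(k)\le \frac{1-\cos(2\pi k/n)}{1-\cos(2\pi/n)},\qquad 0\le k\le n-1 .
\]
Both sides are symmetric under $k\mapsto n-k$, so it suffices to take $m=\psi_n(k)\in\{0,1,\dots,\lfloor n/2\rfloor\}$. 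Writing $1-\cos(2\pi m/n)=2\sin^2(\pi m/n)$, the inequality becomes $\sin(\pi m/n)\ge \sqrt m\,\sin(\pi/n)$.

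This is the main computational step. The function $g(x)=\sin(\pi x/n)/\sqrt x$ is decreasing on $(0,n/2]$, so monotonicity alone does not give the bound. Instead we use concavity. Since $\sin$ is concave on $[0,\pi/2]$, for $1\le m\le n/2$ we have $\sin(\pi m/n)\ge \tfrac{2m}{n}$. We also have $\sin(\pi/n)\le \pi/n$. It therefore suffices that $\tfrac{2m}{n}\ge \sqrt m\,\tfrac{\pi}{n}$, i.e.\ $\sqrt m\ge \pi/2$, which holds for $m\ge 3$. The case $m=1$ is an equality. For $m=2$ we need $\sin(2\pi/n)\ge\sqrt2\sin(\pi/n)$, i.e.\ $\cos(\pi/n)\ge 1/\sqrt2$, which holds exactly when $n\ge 4$. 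This is where the hypothesis $n\ge4$ enters again. The case $m=0$ is trivial.

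Combining these, Theorem~\ref{Thm: Log Sobolev inequality n>=4} gives
\[
\mathrm{Ent}(f^2)\le 2\sum_k\psi_n(k)\abs{a_k}^2\le \frac{2}{1-\cos(2\pi/n)}\inner{f,(I-K)f}.
\]

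For optimality, we use the standard linearization. Put $f=1+\epsilon h$ with $h$ real and mean zero. Then $\mathrm{Ent}(f^2)=2\epsilon^2\norm{h}_2^2+O(\epsilon^3)$, while the right-hand side is $\epsilon^2\,C\inner{h,(I-K)h}$. Hence any admissible constant $C$ must satisfy $C\ge 2/\lambda_1$, where $\lambda_1=1-\cos(2\pi/n)$ is the spectral gap of $I-K$. Taking $h=\mathrm{Re}\,\chi_1=\cos(2\pi x/n)$ attains this, so the constant $\frac{2}{1-\cos(2\pi/n)}$ cannot be improved.
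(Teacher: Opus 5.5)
Your proof is correct and follows the paper's route: Fourier comparison of the multipliers $\psi_n(j)$ and $\frac{1-\cos(2\pi j/n)}{1-\cos(2\pi/n)}$ combined with Theorem~\ref{Thm: Log Sobolev inequality n>=4}, plus the spectral-gap lower bound for optimality. The details differ in two places: the paper obtains the multiplier inequality uniformly from superadditivity of $\varphi(x)=1-\cos(\frac{2\pi x}{n})$ on $[0,\frac{n}{2}]$, via the identity $\varphi(a+b)-\varphi(a)-\varphi(b)=4\sin(\frac{\pi a}{n})\sin(\frac{\pi b}{n})\cos(\frac{\pi(a+b)}{n})\ge 0$, where you use Jordan's inequality with separate checks for $m=1,2$; and the paper cites $2\alpha\le\lambda$ where you linearize directly, which is equally valid.

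One side remark is false and should be dropped: $\sin(\pi x/n)/\sqrt{x}$ is not decreasing on $(0,\frac{n}{2}]$, since it behaves like $\frac{\pi}{n}\sqrt{x}$ near $0$ and so increases at first. Nothing in your argument depends on it.
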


The main novelty of this work is a reduction that converts an optimization problem involving \textbf{logarithmic terms} into the analysis of explicit \textbf{low-degree polynomials}. More precisely, we show that the $n$--LSI follows from the nonnegativity of an explicit polynomial of degree at most $3$ on the region $\{\lambda\in\mathbb{R}_+^n:0<\norm{\lambda}_2^2<n\}$. We further observe that this polynomial admits a simple representation on the Fourier side, which enables suitable estimates. Building on this observation, we carry out a substantial technical analysis of the resulting polynomial inequalities, based on coefficient decompositions with careful use of arithmetic--geometric mean arguments.
\medskip

The article is organized as follows. For completeness, Section~\ref{Sec: Preliminary results and definitions} reviews the Karush--Kuhn--Tucker (KKT) approach to the $n$--LSI from \cite{yao2025optimalhypercontractivitylogsobolevinequalities}. In Section~\ref{Sec: Reduction to a polynomial inequality}, we reduce the $n$--LSI to a polynomial inequality 
\[
Q_{n,2}(x)+Q_{n,4}(x)-Q_{n,3}(x)\ge 0,\qquad x\in \mathbb{R}_+^{\floor{n/2}},
\]
where each $Q_{n,d}$ is a homogeneous polynomial of degree $d$. Section~\ref{Sec: Proof of key lemma, odd n} proves this inequality in the odd case, with $n=5$ treated separately, and Section~\ref{Sec: Proof of key lemma, even n} deals with the even case. 

\section{Preliminary results and definitions}\label{Sec: Preliminary results and definitions}

Let $F_n$ denote the $n\times n$ discrete Fourier transform (DFT) matrix
\[
    F_n=\begin{pmatrix}
        1 & 1 & 1 & \cdots & 1\\
        1 & \omega & \omega^2 & \cdots & \omega^{n-1}\\
        1 & \omega^2 & \omega^4 & \cdots & \omega^{2(n-1)}\\
        \vdots & \vdots & \vdots & \ddots & \vdots\\
        1 & \omega^{n-1} & \omega^{2(n-1)} & \cdots & \omega^{(n-1)(n-1)}
    \end{pmatrix},
\]
where $\omega=e^{\frac{2\pi i}{n}}$ and $(F_n)_{j,k}=\omega^{jk}$ for $0\le j,k\le n-1$. It is well known that $\frac{1}{\sqrt{n}}F_n$ is unitary (see, e.g., \cite[Section 2.5]{MR543191}). For a column vector $x\in\mathbb{C}^n$, we write its DFT as
\[
      \hat{x}=(\hat{x}_0,\ldots,\hat{x}_{n-1})^{\mathrm{T}}=F_nx,\qquad \hat{x}_k=\sum_{j=0}^{n-1}x_j\omega^{jk},\quad 0\le k\le n-1.
\]\par 
Given a non-zero vector $\lambda\in\mathbb{R}_+^n$, define the entropy functional on the vector $\lambda$ by
\[
  \mathrm{H}_n[\lambda] \coloneqq\frac{1}{n}\sum_{k=0}^{n-1}\lambda_k^2\log(\lambda_k^2)-\frac{\norm{\lambda}_2^2}{n}\log(\frac{\norm{\lambda}_2^2}{n})=\frac{1}{n}\sum_{k=0}^{n-1}\lambda_k^2\log(\frac{n\lambda_k^2}{\norm{\lambda}_2^2}),
\]
with the convention $0\log 0=0$. Set
\[
  \diag(\psi_n)\coloneqq \diag\big(\psi_n(0),\psi_n(1),\ldots,\psi_n(n-1)\big)\in M_n(\mathbb{R}),
\]
where $\psi_n(k)=\min(k,n-k)$ is the word-length function on $\mathbb{Z}_n$. Let 
\[
    \Psi(n)=\frac{1}{n}F_n\diag(\psi_n)F_n^{-1}\in M_n(\mathbb{C}).
\]
By identifying $f$ with its value vector $(f(0),\dots,f(n-1))^{\mathrm{T}}$ in $\mathbb R^n$, an easy consequence of the unitarity of $\frac{1}{\sqrt{n}}F_n$ is that the $n$--LSI 
\[
  \int_{\mathbb{Z}_n} f^2\log f^2\dd\mu_n-\norm{f}_2^2\log\norm{f}_2^2\le 2\inner{f,A_{\psi_n}f}_{L_2(\mathbb{Z}_n,\mu_n)},\qquad f\in L_2^+(\mathbb{Z}_n,\mu_n),
\]
is equivalent to
\begin{equation}\label{Eqn: n-LSI with H_n(lambda)}
   \mathrm{H}_n[\lambda] \le 2\inner{\lambda,\Psi(n)\lambda},\qquad \lambda\in \mathbb{R}_+^n.
\end{equation}
Moreover, $\Psi(n)$ is a real symmetric matrix (see, e.g., \cite[Lemma 2.1]{yao2025optimalhypercontractivitylogsobolevinequalities}). \par 

Since both $\inner{\lambda,\Psi(n)\lambda}$ and $\mathrm{H}_n[\lambda]$ are homogeneous of degree $2$, it suffices to study the functional $g(\lambda)\coloneqq 2\inner{\lambda,\Psi(n)\lambda}-\mathrm{H}_n[\lambda]$ on the nonnegative part of the unit sphere 
\[
    \mathbb{S}^{n-1}_+ \coloneqq \Big\{ \lambda\in\mathbb{R}_+^n : \sum_{k=0}^{n-1}\lambda_k^2=1 \Big\}.
\]
Accordingly, we consider the optimization problem
\[
    \min_{\lambda\in\mathbb{S}^{n-1}_+}2\inner{\lambda,\Psi(n)\lambda}-\mathrm{H}_n[\lambda],
\]
and analyze the associated Karush--Kuhn--Tucker (KKT) system. In particular, to prove the nonnegativity of $2\inner{\lambda,\Psi(n)\lambda}-\mathrm{H}_n[\lambda]$, it suffices to show that a modification of the corresponding KKT system has no solution~\cite[Lemma 2.3]{yao2025optimalhypercontractivitylogsobolevinequalities}. We record this criterion from \cite{yao2025optimalhypercontractivitylogsobolevinequalities} for later use.
\begin{lemma}[{\cite[Lemma 2.3]{yao2025optimalhypercontractivitylogsobolevinequalities}}]\label{Lem: KKT condition absorbing mu}
  Let $Q\in M_n(\mathbb{R})$ be a symmetric matrix and set
\[
    g(\lambda)=2\inner{\lambda,Q\lambda}-\mathrm{H}_n[\lambda].
\]
If the system
\begin{equation}\label{Eqns: KKT condition absorbing mu}
\begin{cases}
4Q\lambda-\dfrac{4}{n}\begin{pmatrix}
\lambda_0\log(\lambda_0)\\
\vdots\\
\lambda_{n-1}\log(\lambda_{n-1})
\end{pmatrix}-\nu=0,\\
0<\norm{\lambda}_2^2<n,\\
\lambda_j\geq 0,\qquad 0\le j\le n-1,\\
\lambda_j\nu_j =0,\qquad 0\le j\le n-1,\\
\nu_j\ge 0,\qquad 0\le j\le n-1,
\end{cases}
\end{equation}
has no solution, then $g\ge 0$ on $\mathbb{S}^{n-1}_+$.
\end{lemma}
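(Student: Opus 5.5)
We argue by contradiction, using the homogeneity of $g$ and a rescaling of the minimizer.

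\textbf{Step 1: a minimizer exists and is strictly negative.} Suppose $g$ takes a negative value on $\mathbb{S}^{n-1}_+$. The functional $g$ is continuous on the compact set $\mathbb{S}^{n-1}_+$, with the convention $0\log 0=0$. So it attains a minimum at some $\lambda^*$, and $g(\lambda^*)=m<0$.

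\textbf{Step 2: first-order conditions on the sphere.} The constraint set is $\{\sum\lambda_k^2=1,\ \lambda_k\ge 0\}$, whose constraints satisfy LICQ, so the KKT conditions hold with multipliers.

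There is one subtlety. The map $\lambda_k\mapsto \lambda_k^2\log\lambda_k^2$ has derivative $2\lambda_k\log\lambda_k^2+2\lambda_k$, which is continuous up to $\lambda_k=0$. Hence $g$ is $C^1$ on the closed orthant and the KKT theorem applies.

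Compute the gradient:
\[
\nabla\Bigl(\tfrac1n\sum\lambda_k^2\log\lambda_k^2\Bigr)_k=\tfrac{4}{n}\lambda_k\log\lambda_k+\tfrac{2}{n}\lambda_k .
\]
The term $-\tfrac1n\norm{\lambda}_2^2\log(\norm{\lambda}_2^2/n)$ has gradient $-\tfrac{2}{n}\lambda\bigl(\log(\norm{\lambda}_2^2/n)+1\bigr)$.

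The KKT conditions then read
\[
4Q\lambda^*-\tfrac{4}{n}(\lambda^*_k\log\lambda^*_k)_k+\tfrac{2}{n}\log\bigl(\tfrac1n\bigr)\lambda^*-2\mu\lambda^*-\nu=0,
\]
with $\nu\ge0$ and $\nu_k\lambda_k^*=0$.

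\textbf{Step 3: identify the sphere multiplier.} Take the inner product with $\lambda^*$ and use $\nu_k\lambda^*_k=0$ together with $\norm{\lambda^*}_2=1$. This shows that $2\mu-\tfrac{2}{n}\log(1/n)$ equals $2g(\lambda^*)=2m$, since $\langle\lambda,\nabla g(\lambda)\rangle=2g(\lambda)$ for a function homogeneous of degree $2$.

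Define $c:=2\mu-\tfrac2n\log(1/n)$, so the system becomes
\[
4Q\lambda^*-\tfrac4n(\lambda^*_k\log\lambda^*_k)_k-c\lambda^*-\nu=0,\qquad c=2m<0 .
\]

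\textbf{Step 4: absorb $c$ by rescaling.} Set $\tilde\lambda=s\lambda^*$ with $s>0$. Then
\[
\tilde\lambda_k\log\tilde\lambda_k=s\lambda_k^*\log\lambda_k^*+s\log s\,\lambda_k^* .
\]
Multiplying the equation by $s$ gives
\[
4Q\tilde\lambda-\tfrac4n(\tilde\lambda_k\log\tilde\lambda_k)_k+\bigl(\tfrac4n\log s-c\bigr)\tilde\lambda-s\nu=0 .
\]
Choose $\log s=\tfrac{n c}{4}$, which kills the linear term. Since $c<0$, this gives $s<1$, hence $0<\norm{\tilde\lambda}_2^2=s^2<1<n$.

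The vector $\tilde\nu=s\nu\ge 0$ still satisfies complementary slackness with $\tilde\lambda$. So $(\tilde\lambda,\tilde\nu)$ solves \eqref{Eqns: KKT condition absorbing mu}, contradicting the hypothesis. Therefore $g\ge0$.

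\textbf{Main obstacle.} The delicate point is the sign bookkeeping in Steps 3–4. The strict negativity of the minimum must translate exactly into $s<1$, which is what puts $\tilde\lambda$ in the region $\norm{\tilde\lambda}_2^2<n$. The other point to check is the $C^1$ behaviour at the boundary $\lambda_k=0$ handled in Step 2; there, zero coordinates enter only through $\nu$.
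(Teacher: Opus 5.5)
Your architecture is the right one: minimize on $\mathbb{S}^{n-1}_+$, apply KKT under LICQ, identify the sphere multiplier via Euler's identity, then rescale to absorb the linear term. Steps 1, 2 and the mechanics of Step 4 are fine. But Step 3 miscomputes the multiplier, and this makes the key claim of Step 4 false.

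Your KKT equation says exactly $\nabla g(\lambda^*)=2\mu\lambda^*+\nu$. Pairing with $\lambda^*$ and using $\langle\lambda^*,\nabla g(\lambda^*)\rangle=2g(\lambda^*)$ gives $\mu=m$, not $2\mu-\tfrac2n\log\tfrac1n=2m$. Consequently
\[
c=2\mu-\tfrac{2}{n}\log\tfrac{1}{n}=2m+\tfrac{2}{n}\log n ,
\]
which need not be negative, so $s<1$ fails in general. Concretely, take $Q=\varepsilon I$ with $0<\varepsilon<\tfrac{\log n}{2n}$. The minimum of $g$ on $\mathbb{S}^{n-1}_+$ is $m=2\varepsilon-\tfrac1n\log n<0$, attained at the vertices. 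Yet $c=4\varepsilon>0$, and your rescaling factor is $s=e^{n\varepsilon}>1$.

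The repair is one line. With $\log s=\tfrac{nc}{4}=\tfrac{nm}{2}+\tfrac12\log n$ one gets
\[
\|\tilde\lambda\|_2^2=s^2=n\,e^{nm},
\]
which lies in $(0,n)$ precisely because $m<0$. This is why the system is stated with the constraint $0<\|\lambda\|_2^2<n$ and not $<1$. Your bound $s^2<1<n$ would have yielded a strictly stronger lemma, which should have been a warning sign. It also means your ``main obstacle'' paragraph misidentifies what the negativity of $m$ buys: it gives $s^2<n$, not $s<1$.

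With $c$ corrected, the rest of Step 4 goes through unchanged ($\tilde\lambda\ge 0$, $\tilde\nu=s\nu\ge 0$, $\tilde\lambda_j\tilde\nu_j=0$), and the argument is complete.
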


\section{Reduction to a polynomial inequality}\label{Sec: Reduction to a polynomial inequality}
The key novelty of our work is to reduce the existence of solutions to \eqref{Eqns: KKT condition absorbing mu} to an explicit polynomial inequality of degree at most three, since the original system \eqref{Eqns: KKT condition absorbing mu} is typically intractable.\par 
Let $S_n$ be the $n\times n$ cyclic shift matrix, so that
\[
 (S_n\lambda)_i=\lambda_{i+1},
\]
where all indices are taken modulo $n$. Define the forward difference operator on the $n$-cycle by
\[
\nabla \lambda\coloneqq (I-S_n)\lambda,\qquad (\nabla\lambda)_i=\lambda_i-\lambda_{i+1},
\]
and the discrete Laplacian $L=\nabla^*\nabla$ by
\[
L\lambda\coloneqq (I-S_n^\mathrm{T})(I-S_n)\lambda=(2I-S_n-S_n^{\mathrm{T}})\lambda,\qquad (L\lambda)_i=2\lambda_i-\lambda_{i+1}-\lambda_{i-1}.
\]
For $\lambda=(\lambda_0,\dots,\lambda_{n-1})^\mathrm{T}\in\mathbb{R}^n$, write
\[
\lambda^2 \coloneqq (\lambda_0^2,\dots,\lambda_{n-1}^2)^\mathrm{T}.
\]
We define the functional $K_n:\mathbb{R}_+^n\to\mathbb{R}$ by
\begin{equation}\label{Def: Functional Kn}
K_n(\lambda)\coloneqq
4\inner{\lambda,L\Psi(n)\lambda}
-\frac{2}{n}\inner{\lambda^2,L\lambda}.
\end{equation}
\begin{theorem}\label{Thm: nonnegativity of K_n implies n--LSI}
  Assume that for every $\lambda\in \mathbb{R}_+^n$ satisfying $0<\norm{\lambda}_2^2<n$, one has
\begin{equation}\label{Eqn: Kn nonnegative}
K_n(\lambda)\ge 0.
\end{equation}
Then the system \eqref{Eqns: KKT condition absorbing mu} admits no solution. Consequently, the $n$--LSI \eqref{Eqn: n-LSI with H_n(lambda)} holds.
\end{theorem}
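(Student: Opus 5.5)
The plan is to argue by contradiction: assume $(\lambda,\nu)$ solves \eqref{Eqns: KKT condition absorbing mu} with $Q=\Psi(n)$, and show that this forces $K_n(\lambda)<0$, contradicting \eqref{Eqn: Kn nonnegative}. The final claim then follows from Lemma~\ref{Lem: KKT condition absorbing mu} together with the homogeneity reduction explained before it. The idea is to pair the vector equation of the KKT system with $L\lambda$.

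\textbf{Pairing with $L\lambda$.} Both $\Psi(n)$ and $L$ are real symmetric circulant matrices, so they commute. Hence $\inner{L\lambda,\Psi(n)\lambda}=\inner{\lambda,L\Psi(n)\lambda}$. Taking the inner product of the first KKT equation with $L\lambda$ gives
\[
4\inner{\lambda,L\Psi(n)\lambda}=\frac{4}{n}\inner{L\lambda,\lambda\log\lambda}+\inner{L\lambda,\nu},
\]
where $\lambda\log\lambda$ denotes the entrywise vector, with the convention $0\log 0=0$.

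\textbf{The $\nu$-term.} Complementary slackness $\lambda_i\nu_i=0$ gives
\[
\inner{L\lambda,\nu}=\sum_i(2\lambda_i-\lambda_{i+1}-\lambda_{i-1})\nu_i=-\sum_i(\lambda_{i+1}+\lambda_{i-1})\nu_i\le 0,
\]
because $\lambda,\nu\ge 0$. Substituting into the definition of $K_n$ yields
\[
K_n(\lambda)\le \frac{4}{n}\inner{L\lambda,\varphi(\lambda)},\qquad \varphi(x)\coloneqq x\log x-\tfrac{x^2}{2},
\]
with $\varphi$ applied entrywise.

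\textbf{Sign of the right-hand side.} Since $L=\nabla^*\nabla$, we can write
\[
\inner{L\lambda,\varphi(\lambda)}=\sum_i(\lambda_i-\lambda_{i+1})\big(\varphi(\lambda_i)-\varphi(\lambda_{i+1})\big).
\]
On $(0,\infty)$ we have $\varphi'(x)=\log x+1-x\le 0$, with equality only at $x=1$. Also $\varphi$ is continuous on $[0,\infty)$. So $\varphi$ is strictly decreasing on $[0,\infty)$, and every summand is $\le 0$. Moreover, the sum vanishes only if $\lambda_i=\lambda_{i+1}$ for all $i$, that is, only if $\lambda$ is constant. Therefore $K_n(\lambda)<0$ whenever $\lambda$ is non-constant, which contradicts the hypothesis.

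\textbf{The constant case.} It remains to exclude $\lambda=c(1,\dots,1)^{\mathrm T}$. The constraint $0<\norm{\lambda}_2^2<n$ forces $0<c<1$. Since every $\lambda_j>0$, slackness gives $\nu=0$. Since $\psi_n(0)=0$, the constant vector lies in the kernel of $\Psi(n)$. The KKT equation then reduces to $-\frac{4}{n}c\log c=0$, which forces $c=1$, a contradiction.

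The only real obstacle is spotting the right test vector $L\lambda$ and the monotonicity of $x\log x-x^2/2$; once these are identified, the remaining steps are direct.
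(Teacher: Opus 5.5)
Your proof is correct and is essentially the paper's argument. Pairing the KKT equation with $L\lambda=\nabla^*\nabla\lambda$ is just the summed form of the paper's edgewise quantities $T_{n,i}$, your $\nu$-term bound is the paper's $(\nu_i-\nu_{i+1})(\lambda_i-\lambda_{i+1})\le 0$, the constant case is handled identically, and strict monotonicity of $\varphi(x)=x\log x-\tfrac{x^2}{2}$ is equivalent to the secant estimate \eqref{Eqn: secant estimate} (you derive it from $\varphi'(x)=\log x+1-x\le 0$, whereas the paper uses Hermite--Hadamard).
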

\begin{proof}
  Suppose, for contradiction, that $(\lambda^*,\nu^*)$ is a solution of \eqref{Eqns: KKT condition absorbing mu}. We first show that $\lambda^*$ cannot be a constant vector. Indeed, if $\lambda^*=c(1,\dots,1)=c\mathbf{1}$, then $0<\norm{\lambda^*}_2^2=nc^2<n$ implies $c\in(0,1)$. Since every coordinate of $\lambda^*$ is strictly positive, the condition $\lambda_j^*\nu_j^*=0$ and $\nu^*_j\geq 0$ for all $j$ gives $\nu^*=0$. On the other hand, $\mathbf{1}$ is an eigenvector of $\Psi(n)$ with eigenvalue $0$, so the first equation in \eqref{Eqns: KKT condition absorbing mu} becomes
\[
-\frac{4}{n}c\log c\,\mathbf{1}=\mathbf{0},
\]
which is impossible because $c\log c\neq 0$ for $0<c<1$.\par 

Next we claim that
\begin{equation}\label{Eqn: secant estimate}
\frac{a\log a-b\log b}{a-b}<\frac{a+b}{2},
\qquad a,b\geq 0,\quad a\neq b,
\end{equation}
with the convention $0\log 0=0$. If $a,b>0$, then the Hermite--Hadamard inequality
\[
   \frac{1}{b-a}\int_a^b f(x)\dd x<f(\frac{a+b}{2})
\]
applied to the strictly concave function $x\mapsto 1+\log x$ on $[\min(a,b),\max(a,b)]$ gives
\[
\frac{a\log a-b\log b}{a-b} < 1+\log\left(\frac{a+b}{2}\right)\leq \frac{a+b}{2}.
\]
If $b=0$, then
\[
\frac{a\log a-b\log b}{a-b}=\log a<\frac{a}{2}=\frac{a+b}{2},
\]
so \eqref{Eqn: secant estimate} holds in all cases.

For each $i$, define
\[
T_{n,i}(\lambda)\coloneqq \bigl(4(\Psi(n)\lambda)_i-4(\Psi(n)\lambda)_{i+1}\bigr)(\lambda_i-\lambda_{i+1})-\frac{2}{n}(\lambda_i+\lambda_{i+1})(\lambda_i-\lambda_{i+1})^2.
\]
If $\lambda_i^*\neq \lambda_{i+1}^*$, then by the first equation in
\eqref{Eqns: KKT condition absorbing mu} and \eqref{Eqn: secant estimate}, we have
\[
\frac{4(\Psi(n)\lambda^*)_i-4(\Psi(n)\lambda^*)_{i+1}-(\nu_i^*-\nu_{i+1}^*)}{\lambda_i^*-\lambda_{i+1}^*}=\frac{4}{n}\frac{\lambda_i^*\log\lambda_i^*-\lambda_{i+1}^*\log\lambda_{i+1}^*}{\lambda_i^*-\lambda_{i+1}^*}<\frac{2}{n}(\lambda_i^*+\lambda_{i+1}^*).
\]
Multiplying both sides by $(\lambda_i^*-\lambda_{i+1}^*)^2>0$, we obtain
\[
T_{n,i}(\lambda^*) < (\nu_i^*-\nu_{i+1}^*)(\lambda_i^*-\lambda_{i+1}^*).
\]
Moreover, since $\lambda_j^*,\nu_j^*\geq 0$ and $\lambda_j^*\nu_j^*=0$ for all $j$, we have
\[
(\nu_i^*-\nu_{i+1}^*)(\lambda_i^*-\lambda_{i+1}^*)=-\nu_i^*\lambda_{i+1}^*-\nu_{i+1}^*\lambda_i^*\leq 0.
\]
Hence $T_{n,i}(\lambda^*)<0$ whenever $\lambda_i^*\neq \lambda_{i+1}^*$. If instead $\lambda_i^*=\lambda_{i+1}^*$, then clearly
\[
T_{n,i}(\lambda^*)=0.
\]
 Since $\lambda^*$ is not constant, there exists at least one index $i$ such that $\lambda_i^*\neq \lambda_{i+1}^*$, and therefore
\[
\sum_{i=0}^{n-1}T_{n,i}(\lambda^*)<0.
\]\par 

On the other hand,
\[
\sum_{i=0}^{n-1}T_{n,i}(\lambda)=4\inner{\nabla\lambda,\nabla(\Psi(n)\lambda)}-\frac{2}{n}\sum_{i=0}^{n-1}(\lambda_i+\lambda_{i+1})(\lambda_i-\lambda_{i+1})^2.
\]
Since $L=\nabla^*\nabla$, the first term is
\[
4\inner{\nabla\lambda,\nabla(\Psi(n)\lambda)}=4\inner{\lambda,L\Psi(n)\lambda}.
\]
Also,
\[
\nabla(\lambda^2)
=
\bigl(
(\lambda_0+\lambda_1)(\lambda_0-\lambda_1),\dots,
(\lambda_{n-1}+\lambda_0)(\lambda_{n-1}-\lambda_0)
\bigr)^{\mathrm{T}},
\]
and hence
\[
\frac{2}{n}\sum_{i=0}^{n-1}(\lambda_i+\lambda_{i+1})(\lambda_i-\lambda_{i+1})^2 = \frac{2}{n}\inner{\nabla(\lambda^2),\nabla\lambda} = \frac{2}{n}\inner{\lambda^2,L\lambda}.
\]
Therefore, we have
\[
\sum_{i=0}^{n-1}T_{n,i}(\lambda)=K_n(\lambda).
\]
Applying this identity to $\lambda=\lambda^*$ yields
\[
K_n(\lambda^*)<0,
\]
which contradicts the assumption \eqref{Eqn: Kn nonnegative}. Hence \eqref{Eqns: KKT condition absorbing mu} has no solution. The final assertion now follows from Lemma~\ref{Lem: KKT condition absorbing mu}.
\end{proof}
The next step is to estimate the cubic term $\inner{\lambda^2,L\lambda}$ on the Fourier side. To simplify the notation, we first introduce the relevant index sets and combinatorial coefficients. Let $n\geq 4$, and for a triple $(a,b,c)\in \{0,\dots,n-1\}^3$ define
\[
  w_{n,(a,b,c)}=\sin^2(\frac{\pi a}{n})+\sin^2(\frac{\pi b}{n})+\sin^2(\frac{\pi c}{n}).
\]
If $n=2m+1$, define 
\[
    \begin{split}
      &E_{2m+1}^{(1)}=\{(a,b,c):a+b=c,1\leq a\leq b\le c\leq m\},\\
      &E_{2m+1}^{(2)}=\{(a,b,c):a+b+c=2m+1,1\leq a\leq b\le c\leq m\},
    \end{split}
\]
and set
\[
E_{2m+1}:=E_{2m+1}^{(1)}\cup E_{2m+1}^{(2)}.
\] 
If $n=2m$, define 
\[
   \begin{split}
     &E_{2m}^{(1)}=\{(a,b,c):a+b=c,1\leq a\leq b\le c\leq m-1\},\\
     &E_{2m}^{(2)}=\{(a,b,c):a+b+c=2m,1\leq a\leq b\le c\leq m-1\},\\
     &E_{2m}^{(3)}=\{(a,b,m):a+b=m,1\leq a\leq b\le m-1\},
   \end{split}
\]
and set
\[
E_{2m}:=E_{2m}^{(1)}\cup E_{2m}^{(2)}\cup E_{2m}^{(3)}.
\]
Here and below, $E_n$ denotes $E_{2m+1}$ when $n=2m+1$, and $E_{2m}$ when $n=2m$. The following lemma is a simple combinatorial observation, and we state it without proof.
\begin{lemma}\label{Lem: combinatorial description of summation indices}
For $n \ge 4$, define 
\[
\mathcal E_n := \{(r,s,t) \in \{1,\dots,n-1\}^3 : r+s+t = 0 \bmod n\},
\]
and
\[
\overline{\mathcal E}_n := \left\{(a,b,c) \in \{1,\dots,\lfloor n/2 \rfloor \}^3 : a+b+c=n, \, \text{or } \, a+b=c, \, \text{or } \, a+c=b, \, \text{or } \, b+c=a \right\}.
\]
Define the map $\pi:\mathcal{E}_n\to \overline{\mathcal{E}}_n$ by
\[
\pi(r,s,t) \coloneqq (\min(r,n-r),\min(s,n-s),\min(t,n-t)).
\]
Then $\pi(\mathcal E_n) = \overline{\mathcal E}_n$, and $\pi$ is two-to-one. More precisely,
\[
\pi^{-1}(a,b,c) = \begin{cases}
\{(a,b,c), (n-a, n-b, n-c)\}, & a+b+c=n, \\
\{(a,b,n-c), (n-a, n-b,c)\}, & a+b=c, \\
\{(a,n-b,c), (n-a,b,n-c)\}, & a+c=b, \\
\{(n-a,b,c), (a,n-b,n-c)\}, & b+c=a, \\
\end{cases}
\]
For every $(a,b,c) \in \overline{\mathcal E}_n$, its nondecreasing rearrangement is the unique element of $E_n$ obtained by permuting $(a,b,c)$. Let $N_{n,(a,b,c)}$ be the number of distinct ordered triples obtained by permuting $(a,b,c)\in E_n$. Then
\[
N_{2m+1,(a,b,c)} =
\begin{cases}
1, & (a,b,c) \in E_{2m+1}^{(2)} \text{ and } a=b=c, \\
3, & (a,b,c) \in E_{2m+1}^{(2)} \text{ and exactly two of } a,b,c \text{ are equal}, \\
6, & (a,b,c) \in E_{2m+1}^{(2)} \text{ and } a,b,c \text{ are distinct}, \\
3, & (a,b,c) \in E_{2m+1}^{(1)} \text{ and } a=b, \\
6, & (a,b,c) \in E_{2m+1}^{(1)} \text{ and } a<b,
\end{cases}
\]
and
\[
N_{2m,(a,b,c)} =
\begin{cases}
1, & (a,b,c) \in E_{2m}^{(2)} \text{ and } a=b=c, \\
3, & (a,b,c) \in E_{2m}^{(2)} \text{ and exactly two of } a,b,c \text{ are equal}, \\
6, & (a,b,c) \in E_{2m}^{(2)} \text{ and } a,b,c \text{ are distinct}, \\
3, & (a,b,c) \in E_{2m}^{(1)} \cup E_{2m}^{(3)} \text{ and } a=b, \\
6, & (a,b,c) \in E_{2m}^{(1)} \cup E_{2m}^{(3)} \text{ and } a<b,
\end{cases}
\]
\end{lemma}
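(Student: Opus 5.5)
The argument is a direct case analysis on residues. We check four things in turn: that $\pi$ lands in $\overline{\mathcal E}_n$, that every element of $\overline{\mathcal E}_n$ has exactly the two stated preimages, that nondecreasing rearrangement gives a bijection onto $E_n$, and finally the orbit counts $N_{n,(a,b,c)}$.

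\emph{Step 1: $\pi(\mathcal E_n)\subseteq\overline{\mathcal E}_n$.} Take $(r,s,t)\in\mathcal E_n$. Since $r,s,t\in\{1,\dots,n-1\}$, we have $r+s+t\in\{n,2n\}$. Write $\pi(r,s,t)=(a,b,c)$, where each coordinate is either $a=r$ (if $r\le n/2$) or $a=n-r$ (otherwise), and similarly for $b,c$. Every coordinate lies in $\{1,\dots,\lfloor n/2\rfloor\}$, and $a\equiv\pm r \pmod n$, etc. Hence $\epsilon_1a+\epsilon_2b+\epsilon_3c\equiv 0\pmod n$ for some signs $\epsilon_i$. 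Replacing all signs by their negatives, we may assume at most one sign is negative. The signed sum has absolute value at most $3n/2$, and when exactly one sign is negative it lies in $(-n/2,\,n)$. So either:
\begin{itemize}
\item all signs are positive, and then $a+b+c=n$ (since $3\le a+b+c\le 3n/2<2n$); or
\item exactly one sign is negative, and then the signed sum is $0$, which is one of $a+b=c$, $a+c=b$, $b+c=a$.
\end{itemize}
The case where the signed sum equals $n$ with one negative sign is excluded, because the sum of the two positive terms is at most $n$, with equality only if both equal $n/2$; but then the negative term forces the sum below $n$.

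\emph{Step 2: fibers of $\pi$.} Given $(a,b,c)\in\overline{\mathcal E}_n$, any preimage has each coordinate in $\{a,n-a\}$, $\{b,n-b\}$, $\{c,n-c\}$ respectively, giving at most $8$ sign patterns. We check directly which patterns have coordinate sum $\equiv 0$. In the case $a+b+c=n$, flipping exactly one coordinate changes the sum by $n-2x$ with $0<2x<n$, unless $x=n/2$. So the valid patterns are exactly the all-unflipped one and the all-flipped one. The cases $a+b=c$ and the permuted ones are handled the same way.

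Some care is needed when a coordinate equals $n/2$, since then $x=n-x$ and two sign patterns collapse to the same triple. For example, for even $n$ and $c=m$, the triple $(a,b,m)$ with $a+b=m$ satisfies both $a+b=c$ and $a+b+c=2m$. Its preimages should then be listed once, and one must confirm that exactly two distinct triples remain. This boundary bookkeeping, where the four defining conditions overlap (which also happens if $a+b=c$ together with $a+b+c=n$, i.e.\ $c=n/2$), is the step I expect to be the main obstacle. I would handle it by separately enumerating coordinates equal to $m$ when $n=2m$.

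\emph{Step 3: rearrangement.} A nondecreasing triple $a\le b\le c$ in $\overline{\mathcal E}_n$ either satisfies $a+b+c=n$, or has $c$ as the largest, in which case the only possible additive relation is $a+b=c$. This lands it in $E^{(2)}$ or $E^{(1)}$, with the upper bounds $c\le m$ or $c\le m-1$ coming from $c\le\lfloor n/2\rfloor$. The subset $E^{(3)}_{2m}$ accounts for $c=m$ when $n$ is even. The condition $a+b=c$ with $c=m$ coincides with $a+b+c=2m$, which is why $E^{(2)}_{2m}$ is restricted to $c\le m-1$ and $E^{(3)}_{2m}$ is listed separately.

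\emph{Step 4: orbit counts.} $N_{n,(a,b,c)}$ is the standard orbit size of a triple under $S_3$: it is $1$, $3$, or $6$ according to the number of coincidences among $a,b,c$. For the sets $E^{(1)}$ and $E^{(3)}$, the relation $c=a+b>b$ excludes $b=c$ and $a=b=c$, leaving only the two cases $a=b$ and $a<b$.
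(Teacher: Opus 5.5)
Your proof is correct. The paper states this lemma without proof (``a simple combinatorial observation''), so there is no competing route to compare with. Your sign-pattern analysis is the natural way to supply the missing proof.

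The boundary case you flag as the main obstacle is already settled by your Step~1. That step shows the sign patterns $\epsilon$ with $\epsilon_1a+\epsilon_2b+\epsilon_3c\equiv 0 \pmod n$ are, up to the global flip, in one-to-one correspondence with the defining conditions that $(a,b,c)$ satisfies. Two conditions can hold at once only when $n=2m$, exactly one coordinate equals $m$, and the other two sum to $m$:
\begin{itemize}
\item two additive relations together force a zero coordinate;
\item $a+b+c=n$ combined with an additive relation forces the ``sum'' coordinate to be $n/2$;
\item no element of $\overline{\mathcal E}_n$ has two coordinates equal to $m$.
\end{itemize}
In that situation the extra pair of sign patterns differs from the first pair only in the coordinate equal to $m=n-m$. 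It therefore produces the same two triples, which are distinct because not all three coordinates can equal $n/2$. So the separate enumeration you plan is unnecessary.

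Two small points need tightening. First, in Step~2 you discuss only single flips; add that a double flip is a global flip composed with a single flip, so it reduces to the single-flip case. Second, your overview calls sorting a bijection $\overline{\mathcal E}_n\to E_n$, which is not accurate. What is true is that $\overline{\mathcal E}_n$ is permutation invariant and contains $E_n$, so the fibres of the sorting map are exactly the $S_3$-orbits. Their sizes are the numbers $N_{n,(a,b,c)}$ you compute in Step~4, and this is precisely what the Fourier-side estimate of the cubic term uses.
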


For $n\geq 4$ and a triple $(a,b,c)\in E_n$, define the weight $\theta_{n,(a,b,c)}=\frac{N_{n,(a,b,c)}}{3}$ for the triple $(a,b,c)$. 
For $x=(x_1,\dots,x_{\floor{n/2}})\in \mathbb{R}_+^{\floor{n/2}}$, set
\[
  Q_{n,3}(x)=\frac{1}{n^3}\sum_{(a,b,c)\in E_{n}}\theta_{n,(a,b,c)} w_{n,(a,b,c)}x_ax_bx_c.
\]
\begin{lemma}\label{Lem: upper bound of cubic term}
  Let $n\geq 4$, and write $n=2m$ or $n=2m+1$ according to parity. For $\lambda\in \mathbb{R}_+^n$, let $\hat{\lambda}=F_n\lambda$ and $\tilde{\lambda}=(\abs{\hat{\lambda}_1},\dots,\abs{\hat{\lambda}_m})\in \mathbb{R}_+^m$. If $n=2m+1$, then we have
\[
\inner{\lambda^2,L\lambda}\leq \frac{16}{n^2}\hat{\lambda}_0\sum_{k=1}^{m}\sin^2(\frac{\pi k}{n})\abs{\hat{\lambda}_k}^2+8n Q_{n,3}(\tilde{\lambda}).
\]
If $n=2m$, then we have
\[
\inner{\lambda^2,L\lambda}\le\frac{16}{n^2} \hat\lambda_0\left(\sum_{k=1}^{m-1}\sin^2(\frac{\pi k}{n})\abs{\hat{\lambda}_k}^2+\frac{1}{2}\abs{\hat{\lambda}_m}^2  \right)+8n Q_{n,3}(\tilde{\lambda}).
\]
\end{lemma}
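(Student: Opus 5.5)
Write $\inner{\lambda^2,L\lambda}$ in the Fourier basis. Since $L$ is a circulant matrix, it is diagonalized by $F_n$:
\[
\widehat{L\lambda}_k=\Bigl(2-\omega^{k}-\omega^{-k}\Bigr)\hat\lambda_k=4\sin^2(\tfrac{\pi k}{n})\,\hat\lambda_k .
\]
Parseval (from the unitarity of $\tfrac1{\sqrt n}F_n$) then gives
\[
\inner{\lambda^2,L\lambda}=\frac1n\sum_{k}\widehat{\lambda^2}_k\,\overline{\widehat{L\lambda}_k}.
\]
The convolution theorem gives $\widehat{\lambda^2}_k=\frac1n\sum_{r+s\equiv k}\hat\lambda_r\hat\lambda_s$. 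Hence
\[
\inner{\lambda^2,L\lambda}=\frac{4}{n^2}\sum_{r+s+t\equiv 0}\sin^2(\tfrac{\pi t}{n})\,\hat\lambda_r\hat\lambda_s\hat\lambda_t ,
\]
where we used $\overline{\hat\lambda_k}=\hat\lambda_{-k}$ for real $\lambda$. The sum is over all $(r,s,t)\in\{0,\dots,n-1\}^3$, and it is real.

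We split this sum according to how many indices vanish.

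\emph{Two or more zero indices.} If two indices are zero, the third is zero too, and then $\sin^2(0)=0$, so these terms vanish.

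\emph{Exactly one zero index.} If $t=0$ the term vanishes. If $r=0$ or $s=0$, the remaining two indices satisfy $s=-t$. Together these contribute
\[
\frac{8}{n^2}\hat\lambda_0\sum_{t=1}^{n-1}\sin^2(\tfrac{\pi t}{n})\abs{\hat\lambda_t}^2 .
\]
Pairing $t$ with $n-t$ gives $\frac{16}{n^2}\hat\lambda_0\sum_{k=1}^m\sin^2(\tfrac{\pi k}n)\abs{\hat\lambda_k}^2$ for odd $n$. For even $n$, the middle index $t=m$ appears only once and $\sin^2(\pi/2)=1$, which produces the $\tfrac12\abs{\hat\lambda_m}^2$ term. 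Note that $\hat\lambda_0\ge0$ because $\lambda\ge0$. So these terms give exactly the first part of the claimed bound, with equality.

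\emph{No zero index.} The remaining sum runs over $\mathcal E_n$. Symmetrize it over permutations of $(r,s,t)$: since the product $\hat\lambda_r\hat\lambda_s\hat\lambda_t$ is symmetric, the weight $\sin^2(\tfrac{\pi t}{n})$ can be replaced by $\tfrac13 w_{n,(r,s,t)}$. The quantity $w$ depends only on $\pi(r,s,t)$, because $\sin^2$ is invariant under $k\mapsto n-k$.

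Now apply Lemma~\ref{Lem: combinatorial description of summation indices}. Each fibre $\pi^{-1}(a,b,c)$ consists of two triples that are negatives of each other mod $n$. So their products are complex conjugates, and the fibre contributes
\[
2\,\mathrm{Re}\bigl(\hat\lambda_r\hat\lambda_s\hat\lambda_t\bigr)\le 2\,\tilde\lambda_a\tilde\lambda_b\tilde\lambda_c ,
\]
using $\abs{\hat\lambda_{n-k}}=\abs{\hat\lambda_k}$. Summing over $\overline{\mathcal E}_n$ and grouping ordered triples by their nondecreasing rearrangement in $E_n$ introduces the multiplicity $N_{n,(a,b,c)}$. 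The bound becomes
\[
\frac{4}{n^2}\cdot\frac{2}{3}\sum_{E_n}N_{n,(a,b,c)}\,w_{n,(a,b,c)}\,\tilde\lambda_a\tilde\lambda_b\tilde\lambda_c
=\frac{8}{n^2}\sum_{E_n}\theta_{n,(a,b,c)}\,w\,\tilde\lambda_a\tilde\lambda_b\tilde\lambda_c
=8n\,Q_{n,3}(\tilde\lambda).
\]

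The main obstacle is bookkeeping in this last step. One must check that the map $\pi$ together with the rearrangement counts each term with the right multiplicity. In particular, for even $n$ the index $m$ is self-conjugate, and the triples in $E^{(3)}_{2m}$, together with triples such as $(m,m,0)$, need care. The latter have a zero index and so were already handled above. One also needs to confirm that the count $N_{2m}$ given in the lemma matches the number of ordered triples in $\overline{\mathcal E}_{2m}$. Finally, the normalization constants (the factors $1/n$ from Parseval and the convolution theorem) should be checked against the target coefficient $8n$ in $Q_{n,3}$, which carries a $1/n^3$.
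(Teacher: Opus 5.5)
Your proposal is correct and follows the paper's proof: Fourier expansion of the trilinear form, symmetrization to $\tfrac13 w_{n,(r,s,t)}$, pairing each fibre of $\pi$ into a complex-conjugate pair bounded by $2\tilde\lambda_a\tilde\lambda_b\tilde\lambda_c$, and regrouping over $E_n$ with the multiplicities $N_{n,(a,b,c)}$. The one cosmetic difference is how the zero frequency is separated. The paper first subtracts the mean, $\mu=\lambda-\frac{\hat\lambda_0}{n}\mathbf 1$, and writes $\inner{\lambda^2,L\lambda}=\frac{2\hat\lambda_0}{n}\inner{\mu,L\mu}+\inner{\mu^2,L\mu}$. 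You instead split the full sum by the number of vanishing indices, and your two cases coincide exactly with those two terms.
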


\begin{proof}
Define
\[
 \mu\coloneqq\lambda-\frac{\hat{\lambda}_0}{n}\mathbf{1}.
\]
By $L\mathbf{1}=0$, we have $L\lambda=L\mu$. Therefore,
\[
\inner{\lambda^2,L\lambda}=\inner{(\frac{\hat{\lambda}_0}{n}\mathbf{1}+\mu)^2,L\mu}=\frac{2\hat{\lambda}_0}{n}\inner{\mu,L\mu}+\inner{\mu^2,L\mu}.
\]
Since $\hat{\mu}_0=0$, $\hat{\mu}_k=\hat{\lambda}_k=\overline{\hat{\lambda}_{n-k}}$ for $k\neq 0$ and
\[
F_nLF_n^{-1}=F_n(2I-S_n-S_n^{-1})F_n^{-1}=\diag\left( 0,4\sin^2(\frac{\pi}{n}),4\sin^2(\frac{2\pi}{n}),\dots,4\sin^2(\frac{(n-1)\pi}{n}) \right),
\]
Parseval's identity yields
\[
\inner{\mu,L\mu}=\frac{4}{n}\sum_{k=1}^{n-1}\sin^2(\frac{\pi k}{n})\abs{\hat{\lambda}_k}^2,
\]
and therefore
\begin{equation}\label{Eqn: 2lambda_av <f,Lmu>}
  \frac{2\hat{\lambda}_0}{n}\inner{\mu,L\mu}=\frac{8}{n^2}\hat{\lambda}_0\sum_{k=1}^{n-1}\sin^2(\frac{\pi k}{n})\abs{\hat{\lambda}_k}^2.
\end{equation}
If $n=2m+1$, by $\hat{\lambda}_{k}=\overline{\hat{\lambda}_{n-k}}$ for real $\lambda$ and $\hat{\mu}_k=\hat{\lambda}_k$ for $k\neq 0$, we get
\begin{equation}\label{Eqn: 2lambda_av <f,Lmu>, odd n}
  \frac{2\hat{\lambda}_0}{n}\inner{\mu,L\mu} = \frac{16}{n^2}\hat{\lambda}_0 \sum_{k=1}^{m}\sin^2(\frac{\pi k}{n})|\hat{\lambda}_k|^2,
\end{equation}
If $n=2m$, similarly 
\begin{equation}\label{Eqn: 2lambda_av <f,Lmu>, even n}
  \frac{2\hat{\lambda}_0}{n}\inner{\mu,L\mu} = \frac{16}{n^2}\hat{\lambda}_0 \left( \sum_{k=1}^{m-1}\sin^2(\frac{\pi k}{n})|\hat{\lambda}_k|^2 +\frac{1}{2}|\hat{\lambda}_m|^2 \right).
\end{equation}\par
It remains to estimate the cubic term $\inner{\mu^2,L\mu}$. Since 
\[
 \mu_j=\frac{1}{n}\sum_{k=0}^{n-1}\omega^{-jk}\hat{\mu}_k,\qquad 0\leq j\leq n-1,
\]
we have
\[
  (\mu^2)_j=\frac{1}{n^2}(\sum_{k=0}^{n-1}\omega^{-jk}\hat{\mu}_k)(\sum_{\ell=0}^{n-1}\omega^{-j\ell}\hat{\mu}_\ell),\qquad 0\leq j\leq n-1,
\]
and hence 
\[
\left( \widehat{\mu^2} \right)_j=\sum_{k=0}^{n-1}\omega^{jk}(\mu^2)_k=\frac{1}{n^2}\sum_{i,\ell=0}^{n-1}\sum_{k=0}^{n-1}\omega^{jk}\omega^{-ki}\omega^{-k\ell}\hat{\mu}_i\hat{\mu}_\ell=\frac{1}{n^2}\sum_{i,\ell=0}^{n-1}\sum_{k=0}^{n-1}\omega^{k(j-i-\ell)}\hat{\mu}_i\hat{\mu}_\ell=\frac{1}{n}\sum_{\substack{i,\ell=0\\ j=i+\ell \bmod n}}^{n-1}\hat{\mu}_i\hat{\mu}_\ell.
\]
Applying Parseval's identity again, we obtain
\[
\begin{split}
  \inner{\mu^2,L\mu}=&\frac{1}{n}\inner{\widehat{\mu^2},\widehat{L\mu}}=\frac{4}{n}\sum_{k=0}^{n-1}\sin^2(\frac{\pi k}{n})\overline{\hat{\mu}_k}\left( \widehat{\mu^2} \right)_k=\frac{4}{n^2}\sum_{k=0}^{n-1}\sin^2(\frac{\pi k}{n})\hat{\mu}_{n-k}\sum_{\substack{\ell,j=0\\\ell+j=k \bmod n}}^{n-1}\hat{\mu}_\ell\hat{\mu}_j\\
  =&\frac{4}{n^2}\sum_{\substack{\ell,j,k=0\\ \ell+j=k \bmod n}}^{n-1}\sin^2(\frac{\pi k}{n})\hat{\mu}_{n-k}\hat{\mu}_\ell\hat{\mu}_j=\frac{4}{n^2}\sum_{\substack{\ell,j,k=0\\ \ell+j+k=0 \bmod n}}^{n-1}\sin^2(\frac{\pi k}{n})\hat{\mu}_\ell\hat{\mu}_j\hat{\mu}_k.
\end{split}
\]
Since the product $\hat{\mu}_\ell\hat{\mu}_j\hat{\mu}_k$ is symmetric in $(\ell,j,k)$, symmetrization gives
\[
\inner{\mu^2,L\mu}=\frac{4}{3n^2}\sum_{\substack{j,k,\ell=0\\j+k+\ell=0 \bmod n}}^{n-1}w_{n,(j,k,\ell)}\hat{\mu}_j\hat{\mu}_k\hat{\mu}_\ell.
\]
 Recalling $\hat{\mu}_0=0$ and $\hat{\mu}_k=\hat{\lambda}_k=\overline{\hat{\lambda}_{n-k}}$ for $k\neq 0$, we obtain

\begin{equation}\label{Eqn: <f^2,Lmu>}
\inner{\mu^2,L\mu}=\frac{4}{3n^2}\sum_{\substack{j,k,\ell=1\\j+k+\ell=0 \bmod n}}^{n-1}w_{n,(j,k,\ell)}\hat{\mu}_j\hat{\mu}_k\hat{\mu}_\ell\leq \frac{4}{3n^2}\sum_{\substack{j,k,\ell=1\\j+k+\ell=0 \bmod n}}^{n-1}w_{n,(j,k,\ell)}\abs{\hat{\lambda}_j}\abs{\hat{\lambda}_k}\abs{\hat{\lambda}_\ell}
\end{equation}
By Lemma \ref{Lem: combinatorial description of summation indices}, we have  
\[
  \begin{split}
    \frac{4}{3n^2}\sum_{\substack{j,k,\ell=1\\j+k+\ell=0 \bmod n}}^{n-1}w_{n,(j,k,\ell)}\abs{\hat{\lambda}_j}\abs{\hat{\lambda}_k}\abs{\hat{\lambda}_\ell}&=\frac{8}{3n^2}\sum_{(a,b,c)\in \overline{\mathcal{E}}_n}w_{n,(a,b,c)}\abs{\hat{\lambda}_a}\abs{\hat{\lambda}_b}\abs{\hat{\lambda}_c}\\
    &=\frac{8}{n^2}\sum_{(a,b,c)\in E_n}\theta_{n,(a,b,c)} w_{n,(a,b,c)}\abs{\hat{\lambda}_a}\abs{\hat{\lambda}_b}\abs{\hat{\lambda}_c}.
  \end{split}
\]

Therefore, we have 
\begin{equation}\label{Ineq: <f^2,Lmu>}
  \begin{split}
  \inner{\mu^2,L\mu}\leq\frac{8}{n^2}\sum_{(a,b,c)\in E_n}\theta_{n,(a,b,c)} w_{n,(a,b,c)}\abs{\hat{\lambda}_a}\abs{\hat{\lambda}_b}\abs{\hat{\lambda}_c}=8nQ_{n,3}(\tilde{\lambda}).\\
\end{split}
\end{equation}
Combining \eqref{Eqn: 2lambda_av <f,Lmu>, odd n} with \eqref{Ineq: <f^2,Lmu>} in the odd case, and \eqref{Eqn: 2lambda_av <f,Lmu>, even n} with \eqref{Ineq: <f^2,Lmu>} in the even case, we obtain the desired estimate of $\inner{\lambda^2,L\lambda}$ in both cases.
\end{proof}

We now derive a lower bound of $K_n(\lambda)$ based on the analysis on the frequency side. For $n=2m+1$ and $x=(x_1,\dots,x_m)\in\mathbb R_+^m$, define
\[
\begin{split}
  &Q_{2m+1,2}(x)=\frac{2}{(2m+1)^2}\sum_{k=1}^m (k-1)\sin^2(\frac{\pi k}{2m+1})x_k^2,\\
  &Q_{2m+1,4}(x)=\frac{2}{(2m+1)^4}\left( \sum_{k=1}^m x_k^2 \right)\left( \sum_{k=1}^m \sin^2(\frac{\pi k}{2m+1})x_k^2 \right).
\end{split}
\]
For $n=2m$ and $x=(x_1,\dots,x_m)\in \mathbb{R}_+^m$, define
\[
\begin{split}
  &Q_{2m,2}(x)=\frac{2}{(2m)^2}\sum_{k=1}^{m-1} (k-1)\sin^2(\frac{\pi k}{2m})x_k^2+\frac{m-1}{(2m)^2}x_m^2,\\
  &Q_{2m,4}(x)=\frac{2}{(2m)^4}\left( \sum_{k=1}^{m-1}x_k^2 +\frac{1}{2}x_m^2\right)\left( \sum_{k=1}^{m-1} \sin^2(\frac{\pi k}{2m})x_k^2 +\frac{1}{2}x_m^2\right).
\end{split}
\]

\begin{theorem}\label{Thm: nonnegativity of Q_2+Q_4-Q_3 implies nonnegativity of K_n}
 Let $n\geq 4$ and $\lambda\in \mathbb R_{+}^n$ satisfy $0<\norm{\lambda}_2^2<n$. Set
\[
\tilde{\lambda}=(\abs{\hat\lambda_1},\dots,\abs{\hat\lambda_{\floor{n/2}}})\in\mathbb R_{+}^{\floor{n/2}}.
\] 
Then
\[
K_n(\lambda)\geq 16(Q_{n,2}(\tilde\lambda)+Q_{n,4}(\tilde\lambda)-Q_{n,3}(\tilde\lambda)).
\]
Consequently, if
\[
Q_{n,2}(x)+Q_{n,4}(x)\ge Q_{n,3}(x) \qquad\text{for all }x\in\mathbb R_{+}^{\floor{n/2}},
\]
then $K_n(\lambda)\ge 0$ for every $\lambda\in\mathbb R_{+}^n$ with $0<\norm{\lambda}_2^2<n$.
\end{theorem}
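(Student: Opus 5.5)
The plan is to compute $K_n(\lambda)$ on the Fourier side, bound its cubic part with Lemma~\ref{Lem: upper bound of cubic term}, and use the constraint $\norm{\lambda}_2^2<n$ to control $\hat\lambda_0$; what remains should be exactly $16(Q_{n,2}+Q_{n,4}-Q_{n,3})$. I treat $n=2m+1$ in detail and then indicate the changes for $n=2m$.

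\emph{Quadratic part.} Since $F_n^{-1}=\frac1n F_n^*$, we have $\Psi(n)=\frac{1}{n^2}F_n\diag(\psi_n)F_n^*$. In the same basis, $L$ is diagonal with entries $4\sin^2(\frac{\pi k}{n})$, so $L\Psi(n)$ is simultaneously diagonalized. Parseval then gives
\[
4\inner{\lambda,L\Psi(n)\lambda}=\frac{16}{n^2}\sum_{k=0}^{n-1}\psi_n(k)\sin^2\Big(\frac{\pi k}{n}\Big)\abs{\hat\lambda_k}^2 .
\]
Pairing $k$ with $n-k$ via $\abs{\hat\lambda_k}=\abs{\hat\lambda_{n-k}}$, this equals $\frac{32}{n^2}\sum_{k=1}^m k\sin^2(\frac{\pi k}{n})\tilde\lambda_k^2$ for odd $n$. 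For even $n$ the middle term $k=m$ appears only once, with $\sin^2(\pi/2)=1$, and contributes $\frac{16m}{n^2}\tilde\lambda_m^2$.

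\emph{Cubic part.} Multiplying Lemma~\ref{Lem: upper bound of cubic term} by $-\frac2n$ gives, for odd $n$,
\[
K_n(\lambda)\ge \frac{32}{n^2}\sum_{k=1}^m\Big(k-\frac{\hat\lambda_0}{n}\Big)\sin^2\Big(\frac{\pi k}{n}\Big)\tilde\lambda_k^2-16\,Q_{n,3}(\tilde\lambda).
\]
The even case is analogous, with the $k=m$ term reading $\frac{16}{n^2}(m-\frac{\hat\lambda_0}{n})\tilde\lambda_m^2$.

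\emph{Controlling $\hat\lambda_0$.} Since $\lambda\ge0$, we have $\hat\lambda_0=\sum_j\lambda_j\ge0$; this sign matters because $\hat\lambda_0$ enters with a negative coefficient. Parseval gives $n\norm{\lambda}_2^2=\sum_k\abs{\hat\lambda_k}^2<n^2$.
\begin{itemize}
\item For odd $n$ this reads $\hat\lambda_0^2+2\sum_{k=1}^m\tilde\lambda_k^2<n^2$.
\item For even $n$ it reads $\hat\lambda_0^2+2\sum_{k<m}\tilde\lambda_k^2+\tilde\lambda_m^2<n^2$.
\end{itemize}
Write $S=\sum_{k=1}^m\tilde\lambda_k^2$ in the odd case and $S=\sum_{k<m}\tilde\lambda_k^2+\frac12\tilde\lambda_m^2$ in the even case. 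Using $\sqrt{1-u}\le 1-u/2$ for $u\in[0,1]$, we get
\[
\frac{\hat\lambda_0}{n}<\sqrt{1-\frac{2S}{n^2}}\le 1-\frac{S}{n^2}.
\]
Each coefficient $\sin^2(\frac{\pi k}{n})\tilde\lambda_k^2$ is nonnegative, so we may replace $k-\frac{\hat\lambda_0}{n}$ by the smaller quantity $(k-1)+\frac{S}{n^2}$.

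\emph{Matching with the $Q$'s.} The $(k-1)$ part gives $\frac{32}{n^2}\sum(k-1)\sin^2(\frac{\pi k}{n})\tilde\lambda_k^2=16Q_{n,2}(\tilde\lambda)$; in the even case the middle term gives $\frac{16(m-1)}{n^2}\tilde\lambda_m^2$, matching the last term of $Q_{2m,2}$. The $\frac{S}{n^2}$ part gives
\[
\frac{32}{n^4}\,S\,\Big(\sum_{k<m}\sin^2\Big(\frac{\pi k}{n}\Big)\tilde\lambda_k^2+\tfrac12\tilde\lambda_m^2\Big)=16\,Q_{n,4}(\tilde\lambda)
\]
in the even case, and the corresponding expression with all $k\le m$ and no half-weight in the odd case. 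Together with the cubic bound this yields $K_n(\lambda)\ge16(Q_{n,2}+Q_{n,4}-Q_{n,3})(\tilde\lambda)$. The "consequently" clause is then immediate, because $\tilde\lambda\in\mathbb R_+^{\floor{n/2}}$.

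The only delicate point is bookkeeping: the normalization of $\Psi(n)$ (the factor $\frac1n$ in front of $F_n\diag(\psi_n)F_n^{-1}$), the doubling when pairing $k$ with $n-k$, and the half-weights at $k=m$ when $n$ is even must all be tracked consistently. I expect this verification of constants to be the main, though routine, obstacle.
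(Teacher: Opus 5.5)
Your proposal is correct and follows the paper's proof essentially step for step: the same steps appear there, namely Fourier diagonalization of $L\Psi(n)$, Lemma~\ref{Lem: upper bound of cubic term} for the cubic term, and Parseval to obtain $1-\frac{\hat\lambda_0}{n}\ge \frac{S}{n^2}$, with the same half-weight bookkeeping at $k=m$ for even $n$. The only cosmetic difference is how you get that last bound: you use $\sqrt{1-u}\le 1-\frac{u}{2}$, whereas the paper writes $1-\frac{\hat\lambda_0}{n}=\frac{1-\hat\lambda_0^2/n^2}{1+\hat\lambda_0/n}$ and uses $0\le \hat\lambda_0<n$.
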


\begin{proof}
  Since the DFT matrix diagonalizes both $L$ and $\Psi(n)$:
 \[
  F_n\Psi(n)F_n^{-1}=\frac{1}{n}\diag(\psi_n),\qquad F_nLF_n^{-1}=\diag\left( 0,4\sin^2(\frac{\pi}{n}),4\sin^2(\frac{2\pi}{n}),\dots,4\sin^2(\frac{(n-1)\pi}{n}) \right),
\] 
Parseval's identity gives
\begin{equation}\label{Eqn: Fourier side of <lambda,LPsi(n)lambda>}
  \inner{\lambda,L\Psi(n) \lambda}=\frac{1}{n}\inner{\hat{\lambda},F_nLF_n^{-1}F_n\Psi(n)F_n^{-1}\hat{\lambda}}=\frac{4}{n^2}\sum_{k=1}^{n-1}\psi_n(k)\sin^2(\frac{\pi k}{n})\abs{\hat{\lambda}_k}^2.
\end{equation}
We treat the odd and even cases separately.

\noindent\textbf{Case 1: $n=2m+1$.} By $\psi_{n}(k)=\min(k,n-k)$ and $\abs{\hat\lambda_{n-k}}=\abs{\hat\lambda_k}$ for $1\leq k\leq m$, \eqref{Eqn: Fourier side of <lambda,LPsi(n)lambda>} becomes
\[
  \inner{\lambda,L\Psi(n) \lambda}=\frac{8}{n^2}\sum_{k=1}^m k\sin^2(\frac{\pi k}{n})\abs{\hat{\lambda}_k}^2.
\]
By Lemma~\ref{Lem: upper bound of cubic term}, we obtain
\begin{equation}\label{Ineq: Lower bound of K_n, odd n}
  \begin{split}
   K_n(\lambda)&\geq \frac{32}{n^2}\sum_{k=1}^m k\sin^2(\frac{\pi k}{n})\abs{\hat{\lambda}_k}^2-\frac{2}{n}\left( \frac{16}{n^2}\hat{\lambda}_0\sum_{k=1}^{m}\sin^2(\frac{\pi k}{n})\abs{\hat{\lambda}_k}^2+8n Q_{n,3}(\tilde{\lambda}) \right)\\
   &=\frac{32}{n^2}\sum_{k=1}^m \left( k-\frac{\hat{\lambda}_0}{n} \right)\sin^2(\frac{\pi k}{n})\abs{\hat{\lambda}_k}^2-16 Q_{n,3}(\tilde{\lambda}).
 \end{split}
\end{equation}
Next, Parseval's identity and $\hat{\lambda}_k=\overline{\hat{\lambda}_{n-k}}$ give
\[
\frac{\hat\lambda_0^2}{n^2}+\frac{2}{n^2}\sum_{k=1}^{m}\abs{\hat{\lambda}_k}^2=\frac{1}{n^2}\sum_{k=0}^{n-1}\abs{\hat\lambda_k}^2=\frac{\norm{\lambda}_2^2}{n}<1.
\]
Since $\lambda\in\mathbb R_+^n$ and $\norm{\lambda}_2^2<n$, by the Cauchy-Schwarz inequality we have $\hat{\lambda}_0<n$. It follows that
\[
1-\frac{\hat\lambda_0}{n} = \frac{1-\frac{\hat\lambda_0^2}{n^2}}{1+\frac{\hat\lambda_0}{n}}>\frac{\frac{2}{n^2}\sum_{k=1}^{m}\abs{\hat{\lambda}_k}^2}{1+\frac{\hat\lambda_0}{n}}>\frac{\sum_{k=1}^{m}\abs{\hat{\lambda}_k}^2}{n^2}.
\]
Therefore, we have
\[
  \begin{split}
    \frac{32}{n^2}\sum_{k=1}^m \left( k-\frac{\hat{\lambda}_0}{n} \right)\sin^2(\frac{\pi k}{n})\abs{\hat{\lambda}_k}^2=&\frac{32}{n^2}\sum_{k=1}^m \left( k-1+1-\frac{\hat{\lambda}_0}{n} \right)\sin^2(\frac{\pi k}{n})\abs{\hat{\lambda}_k}^2\\
    \geq &\frac{32}{n^2}\sum_{k=1}^m (k-1)\sin^2(\frac{\pi k}{n})\abs{\hat{\lambda}_k}^2+\frac{32}{n^4}\left( \sum_{k=1}^m\abs{\hat{\lambda}_k}^2 \right)\sum_{k=1}^m\sin^2(\frac{\pi k}{n})\abs{\hat{\lambda}_k}^2\\
    =&16Q_{n,2}(\tilde{\lambda})+16Q_{n,4}(\tilde{\lambda}).
  \end{split}
\]
Combining this with \eqref{Ineq: Lower bound of K_n, odd n}, we conclude that
\[
  K_n(\lambda)\geq 16Q_{n,2}(\tilde{\lambda})+16Q_{n,4}(\tilde{\lambda})-16 Q_{n,3}(\tilde{\lambda}),
\]
which proves the theorem for odd $n=2m+1$.\par 

\noindent\textbf{Case 2: $n=2m$.} In this case, the middle Fourier coefficient $\hat{\lambda}_m$ occurs only once in \eqref{Eqn: Fourier side of <lambda,LPsi(n)lambda>}. By $\psi_{n}(k)=\min(k,n-k)$ and $\abs{\hat\lambda_{n-k}}=\abs{\hat\lambda_k}$ for $1\leq k\leq m$, we have
\[
  \begin{split}
    \inner{\lambda,L\Psi(n) \lambda}=\frac{8}{n^2}\sum_{k=1}^{m-1} k\sin^2(\frac{\pi k}{n})\abs{\hat{\lambda}_k}^2+\frac{2}{n}\abs{\hat{\lambda}_{m}}^2.
  \end{split}
\]
By Lemma~\ref{Lem: upper bound of cubic term},  we obtain
\begin{equation}\label{Ineq: Lower bound of K_n, even n}
  \begin{split}
   K_n(\lambda)\geq &\frac{32}{n^2}\sum_{k=1}^{m-1} k\sin^2(\frac{\pi k}{n})\abs{\hat{\lambda}_k}^2+\frac{8}{n}\abs{\hat{\lambda}_{m}}^2-\frac{2}{n}\left( \frac{16}{n^2}\hat{\lambda}_0\left( \sum_{k=1}^{m-1}\sin^2(\frac{\pi k}{n})\abs{\hat{\lambda}_k}^2+\frac{1}{2}\abs{\hat{\lambda}_{m}}^2 \right)+8n Q_{n,3}(\tilde{\lambda}) \right)\\
   =&\frac{32}{n^2}\sum_{k=1}^{m-1} \left( k-\frac{\hat{\lambda}_0}{n} \right)\sin^2(\frac{\pi k}{n})\abs{\hat{\lambda}_k}^2+\left( \frac{8}{n}-\frac{16\hat{\lambda}_0}{n^3} \right)\abs{\hat{\lambda}_{m}}^2-16 Q_{n,3}(\tilde{\lambda}).
 \end{split}
\end{equation}
Next, Parseval's identity and the reality of $\lambda$ give
\[
\frac{1}{n^2}\hat\lambda_0^2+\frac{2}{n^2}\sum_{k=1}^{m-1}\abs{\hat{\lambda}_k}^2+\frac{1}{n^2}\abs{\hat{\lambda}_{m}}^2=\frac{1}{n^2}\sum_{k=0}^{n-1}\abs{\hat\lambda_k}^2=\frac{\norm{\lambda}_2^2}{n}<1.
\]
Since $\lambda\in\mathbb R_+^n$ and $\norm{\lambda}_2^2<n$, we have $\hat{\lambda}_0<n$. It follows that
\[
1-\frac{\hat\lambda_0}{n} = \frac{1-\frac{\hat\lambda_0^2}{n^2}}{1+\frac{\hat\lambda_0}{n}}>\frac{\frac{2}{n^2}\sum_{k=1}^{m-1}\abs{\hat{\lambda}_k}^2+\frac{1}{n^2}\abs{\hat{\lambda}_m}^2}{1+\frac{\hat\lambda_0}{n}}>\frac{1}{n^2}\left( \sum_{k=1}^{m-1}\abs{\hat{\lambda}_k}^2+\frac{1}{2}\abs{\hat{\lambda}_m}^2 \right).
\]
Using this estimate, we obtain
\[
  \begin{split}
    &\frac{32}{n^2}\sum_{k=1}^{m-1} \left( k-\frac{\hat{\lambda}_0}{n} \right)\sin^2(\frac{\pi k}{n})\abs{\hat{\lambda}_k}^2=\frac{32}{n^2}\sum_{k=1}^{m-1} \left( k-1+1-\frac{\hat{\lambda}_0}{n} \right)\sin^2(\frac{\pi k}{n})\abs{\hat{\lambda}_k}^2\\
    &\qquad \geq \frac{32}{n^2}\sum_{k=1}^{m-1} (k-1)\sin^2(\frac{\pi k}{n})\abs{\hat{\lambda}_k}^2+\frac{32}{n^4}\left( \sum_{k=1}^{m-1}\abs{\hat{\lambda}_k}^2+\frac{1}{2}\abs{\hat{\lambda}_m}^2 \right)\sum_{k=1}^{m-1}\sin^2(\frac{\pi k}{n})\abs{\hat{\lambda}_k}^2,
  \end{split}
\]
and 
\[
  \left( \frac{8}{n}-\frac{16\hat{\lambda}_0}{n^3} \right)\abs{\hat{\lambda}_{m}}^2=\frac{16}{n^2}\left( m-1+1-\frac{\hat{\lambda}_0}{n} \right)\abs{\hat{\lambda}_{m}}^2> \frac{16}{n^2}(m-1)\abs{\hat{\lambda}_{m}}^2+\frac{16}{n^4}\abs{\hat{\lambda}_m}^2\left( \sum_{k=1}^{m-1}\abs{\hat{\lambda}_k}^2+\frac{1}{2}\abs{\hat{\lambda}_m}^2 \right).
\]
Adding the last two inequalities gives 
\[
  \begin{split}
    &\frac{32}{n^2}\sum_{k=1}^{m-1} \left( k-\frac{\hat{\lambda}_0}{n} \right)\sin^2(\frac{\pi k}{n})\abs{\hat{\lambda}_k}^2+\left( \frac{8}{n}-\frac{16\hat{\lambda}_0}{n^3} \right)\abs{\hat{\lambda}_{m}}^2\\
    &\qquad \geq \frac{32}{n^2}\sum_{k=1}^{m-1} (k-1)\sin^2(\frac{\pi k}{n})\abs{\hat{\lambda}_k}^2+\frac{32}{n^4}\left( \sum_{k=1}^{m-1}\abs{\hat{\lambda}_k}^2+\frac{1}{2}\abs{\hat{\lambda}_m}^2 \right)\sum_{k=1}^{m-1}\sin^2(\frac{\pi k}{n})\abs{\hat{\lambda}_k}^2\\
     &\qquad\quad +\frac{16}{n^2}(m-1)\abs{\hat{\lambda}_{m}}^2+\frac{16}{n^4}\abs{\hat{\lambda}_m}^2\left( \sum_{k=1}^{m-1}\abs{\hat{\lambda}_k}^2+\frac{1}{2}\abs{\hat{\lambda}_m}^2 \right)\\
     &\qquad=16Q_{n,2}(\tilde{\lambda})+16Q_{n,4}(\tilde{\lambda}).
  \end{split}
\]
Combining this with \eqref{Ineq: Lower bound of K_n, even n}, we conclude that
\[
  K_n(\lambda)\geq 16Q_{n,2}(\tilde{\lambda})+16Q_{n,4}(\tilde{\lambda})-16 Q_{n,3}(\tilde{\lambda}),
\]
which proves the theorem for even $n=2m$. 
\end{proof}
The following lemma is one of the most technical parts of the paper.
\begin{lemma}\label{Lem: nonnegativity of Q_2+Q_4-Q_3}
Let $n\ge 4$. Then we have
\[
Q_{n,2}(x)+Q_{n,4}(x)-Q_{n,3}(x)\ge 0,\qquad x\in\mathbb R_+^{\floor{n/2}}.
\]
\end{lemma}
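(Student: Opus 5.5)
Everything is homogeneous: $Q_{n,2}$, $Q_{n,3}$, $Q_{n,4}$ have degrees $2,3,4$. By the scaling $x\mapsto sx$, the inequality is equivalent to
\[
Q_{n,3}(x)^2\le 4\,Q_{n,2}(x)\,Q_{n,4}(x),
\]
since $s^2Q_{n,2}+s^4Q_{n,4}-s^3Q_{n,3}\ge 0$ for all $s>0$ is equivalent to $Q_{n,3}\le 2\sqrt{Q_{n,2}Q_{n,4}}$ by AM--GM. In practice I would not square. Instead I would bound each monomial $w_{n,(a,b,c)}x_ax_bx_c$ of $Q_{n,3}$ by
\[
\tfrac{\alpha}{2}\,(\text{quadratic term}) + \tfrac{1}{2\alpha}\,(\text{quartic term}),
\]
and then show that the resulting coefficients are absorbed by those of $Q_{n,2}$ and $Q_{n,4}$.

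Concretely, for $(a,b,c)\in E_n$ with $a\le b\le c$, I would write $x_ax_bx_c=x_c\cdot(x_ax_b)$ and use
\[
x_ax_bx_c\le \tfrac{t}{2}x_c^2+\tfrac{1}{2t}x_a^2x_b^2,
\]
with $t=t_{a,b,c}$ chosen by scaling. The quadratic part is charged to the coefficient $(c-1)\sin^2(\pi c/n)$ of $x_c^2$ in $Q_{n,2}$; since $c$ is the largest index, this coefficient is large. The quartic part is charged to the cross term
\[
\tfrac{2}{n^4}\bigl(\sin^2(\tfrac{\pi a}{n})+\sin^2(\tfrac{\pi b}{n})\bigr)x_a^2x_b^2
\]
of $Q_{n,4}$, or to $\tfrac{2}{n^4}\sin^2(\pi a/n)x_a^4$ when $a=b$. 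The point is that for each pair $\{a,b\}$ there are only boundedly many triples containing it: $c=a+b$ from $E^{(1)}$, or $c=n-a-b$ from $E^{(2)}$, plus the triples where $a$ or $b$ plays the role of the largest index. So each quartic coefficient is used $O(1)$ times. By contrast, for a fixed $c$ there are about $c/2$ triples in $E^{(1)}$ and a comparable number in $E^{(2)}$, and this count matches the linear factor $(c-1)$ in $Q_{n,2}$.

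To make this work I would balance the budget by choosing
\[
t\approx \frac{(\text{number of triples with largest index } c)^{-1}\cdot (c-1)\sin^2(\pi c/n)\, n^{-2}}{w_{n,(a,b,c)}\,n^{-3}\theta},
\]
and then verify, via $w\le 3\sin^2(\pi c/n)$-type bounds together with $\sin(\pi a/n)\ge \tfrac{2a}{n}$ and $\sin^2$ monotonicity up to $n/2$, that the quartic demand $\tfrac{1}{2t}\,w\theta/n^3$ is below $\tfrac{2}{n^4}(\sin^2(\pi a/n)+\sin^2(\pi b/n))$ divided by the multiplicity. For the small cases (odd versus even $n$ with the middle coefficient $x_m$, $n=5$, and the triples with $c=a+b$ where $a=1$ and so $Q_{n,2}$ gives nothing to $x_1^2$) I would handle things by hand.

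The real obstacle is index $1$, because $Q_{n,2}$ has zero coefficient on $x_1^2$. Any triple containing index $1$ must therefore be absorbed entirely through quartic terms, or through the quadratic coefficient of its other indices. For $(1,b,b+1)$ I would use
\[
x_1x_bx_{b+1}\le \tfrac{s}{2}x_{b+1}^2+\tfrac{1}{2s}x_1^2x_b^2,
\]
charging the quadratic part to $x_{b+1}^2$ (coefficient $b\sin^2$) and the quartic part to the $x_1^2x_b^2$ term of $Q_{n,4}$, whose weight $\sin^2(\pi b/n)$ is favorable. Whether the constants close for all $n$ exactly with the sharp constant $2$ is the delicate step. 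I expect to need a careful split between $E^{(1)}$ and $E^{(2)}$, the near-diagonal triples ($b$ close to $n/2$, where $\sin^2\approx 1$), and a separate treatment of small $n$ such as $n=5$ and perhaps $n\le 8$, checked directly.
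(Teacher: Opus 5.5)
Your skeleton (split each monomial $w_{n,(a,b,c)}x_ax_bx_c$ by AM--GM into a piece on $x_c^2$, with $c$ the largest entry, and a piece on $x_a^2x_b^2$) is the same as the paper's. But the proposal stops exactly where the sharp constant is decided, and the rule you give for $t$ does not close.

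Write $s_k=\sin^2(\pi k/n)$. At constant $2$, ``each quartic coefficient is used $O(1)$ times'' is not enough. The argument needs two exact facts. First, a pair $(u,v)$ with $u\le v$ is the pair of smaller entries of at most one triple in $E_n$. Second, the coefficient of $x_u^2x_v^2$ in $n^4Q_{n,4}$, namely $2(s_u+s_v)$ (resp.\ $2s_u$ if $u=v$), is exactly $\theta(s_u+s_v)$ for the largest weight $\theta$ such a triple can carry. Letting each triple spend exactly this share fixes $t$ triple by triple; this is what the paper's $yz\le y^2/n+nz^2/4$ does, and it is not an equal split of the budget of $x_c^2$. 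It leaves on $x_c^2$ the demand $\theta_{n,(a,b,c)}A^{(c)}_{n,(a,b,c)}$, where $A^{(c)}_{n,(a,b,c)}=w_{n,(a,b,c)}^2/(4(s_a+s_b))$.

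The problem then becomes $B_j:=\sum_{(a,b,j)\in E_n}\theta_{n,(a,b,j)}A^{(c)}_{n,(a,b,j)}\le 2(j-1)s_j$. Proving this needs the exact identity $A^{(c)}_{n,(a,b,c)}=s_c+(1-s_c)\,s_as_b/(s_a+s_b)$, which comes from $c\equiv\pm(a+b)\bmod n$. It also needs the exact weighted counts: $j-1$ over $E_n^{(1)}$, and $\max(3j-n,0)$ over $E_n^{(2)}$ (with $\tfrac13$ when $3j=n$). For $j$ near $n/2$ the relative margin is only $O(1/n)$. So estimates such as $w\le 3s_c$ or $\sin x\ge 2x/\pi$, which lose constant factors in $A^{(c)}$, cannot work.

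The more serious gap is the top index when $n=2m+1$ is odd. The total weight of triples with largest entry $m$ is exactly $2(m-1)$, while every $A^{(c)}_{n,(a,b,m)}$ strictly exceeds $s_m$ by the identity above. Hence $B_m>2(m-1)s_m$, and charging every triple to its largest index with these shares overdraws $x_m^2$.

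Your equal split fares no better. In the normalization of $n^2Q_{n,2}$, where $x_m^2$ has coefficient $2(m-1)s_m$, the share of the weight-$2$ triple $(1,m-1,m)$ (for $m\ge 3$) is at most $2s_m$, whether you split by count or by weight. Yet for any $t$ compatible with the $x_1^2x_{m-1}^2$ coefficient, that triple needs at least $2A^{(c)}_{n,(1,m-1,m)}>2s_m$.

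Some extra mechanism must absorb the small excess $\Delta_n=B_m-2(m-1)s_m$. The paper moves a fraction $\delta_n$ of the triple $(2,m-1,m)$ to the other split $x_{m-1}\cdot(x_2x_m)$. This charges $x_{m-1}^2$, which has a margin of order one, and $x_2^2x_m^2$, a pair no triple otherwise uses. The case $n=5$ is handled by an explicit sum of squares. Alternatively, one could exploit that $(1,m,m)$ has weight $1$ while the coefficient of $x_1^2x_m^2$ has room for weight $2$, which halves its demand on $x_m^2$. Your proposal contains no such mechanism.

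Conversely, the obstacle you single out is not one. Index $1$ is never the largest entry of a triple in $E_n$ for $n\ge 4$, so nothing is ever charged to $x_1^2$. For even $n$ no rerouting is needed: the triples $(a,m-a,m)\in E_n^{(3)}$ have $A^{(c)}=1$ and total weight $m-1$, which is exactly the coefficient of $x_m^2$ in $n^2Q_{n,2}$.
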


The proof of Lemma \ref{Lem: nonnegativity of Q_2+Q_4-Q_3} depends on the parity of $n$. The odd case is treated in Section \ref{Sec: Proof of key lemma, odd n}, where the case $n=5$ is handled separately and the general odd case $n=2m+1\ge 7$ is established in Theorem \ref{Thm: Choice of theta for odd n geq 7}. The even case $n=2m\ge 4$ is treated in Theorem \ref{Thm: Choice of theta for even n geq 4} in Section \ref{Sec: Proof of key lemma, even n}. As an immediate consequence, we obtain the main theorem.
\begin{proof}[Proof of Theorem \ref{Thm: Log Sobolev inequality n>=4}]
  By Lemma \ref{Lem: nonnegativity of Q_2+Q_4-Q_3}, we have
\[
Q_{n,2}(x)+Q_{n,4}(x)-Q_{n,3}(x)\ge 0
\qquad \text{for all } x\in\mathbb R_+^{\floor{n/2}}.
\]
Theorem \ref{Thm: nonnegativity of Q_2+Q_4-Q_3 implies nonnegativity of K_n}
therefore implies that
\[
K_n(\lambda)\ge 0
\qquad \text{for all } \lambda\in\mathbb R_+^n \text{ with } 0<\norm{\lambda}_2^2<n.
\]
Applying Theorem \ref{Thm: nonnegativity of K_n implies n--LSI}, we conclude
that the $n$--LSI holds for every $n\ge 4$.
\end{proof}

\begin{proof}[Proof of Corollary \ref{Coro: LSI on circle}]
  Let $P_t^{(n)}$ denote the Poisson-like semigroup on $\mathbb{Z}_n$ in \eqref{Eqn: Formula of P_t}, and let $\mathcal{P}_t$ denote the Poisson semigroup on $\mathbb{S}^1$.
  A standard series expansion of $\norm{\mathcal{P}_t(1+\epsilon\sin\theta)}_q$ and $\norm{1+\epsilon \sin\theta}_p$ at $\epsilon=0$ shows that the condition
\[
  t_{p,q}\ge \tfrac{1}{2}\log\big(\tfrac{q-1}{p-1}\big)
\]
is necessary. Thus it remains to prove that this condition is sufficient. \par 
Fix $t\geq\tfrac{1}{2}\log\big(\tfrac{q-1}{p-1}\big)$. By the density of trigonometric polynomials in $L_p$ and $L_p$-contractivity of $\mathcal{P}_t$, it is sufficient to treat the case where $f$ is a trigonometric polynomial. Write $f$ as
\[
  f(\theta)=\sum_{\abs{m}\leq M}a_m e^{i m\theta}.
\]
Choose $n>2M$. Let $\omega=e^{2\pi i/n}$, and define $f_n:\mathbb{Z}_{n}\to \mathbb{C}$ by
\[
  f_n(j)=f(2\pi j/n)=\sum_{\abs{m}\leq M}a_m\omega^{mj}=\sum_{m=0}^M a_m\omega^{mj}+\sum_{m=1}^M a_{-m}\omega^{(n-m)j}, \qquad 0\leq j\leq n-1.
\]
Since $n>2M$, for every $1\le m\le M$, we have
\[
  \psi_{n}(m)=\psi_n(n-m)=m.
\]
Therefore
\[
\begin{aligned}
  (P_t^{(n)}f_n)(j)=\sum_{m=0}^M e^{-tm}a_m\omega^{mj}+\sum_{m=1}^M e^{-tm}a_{-m}\omega^{(n-m)j}=\sum_{\abs{m}\leq M}e^{-t\abs{m}}a_m\omega^{mj} =(\mathcal{P}_t f)(2\pi j/n).
\end{aligned}
\]
By Theorem \ref{Thm: Hypercontractivity on n}, we have
\[
  \left( \frac{1}{n}\sum_{j=0}^{n-1}
    \abs*{\mathcal{P}_t f(2\pi j/n)}^q \right)^{\frac{1}{q}}\le\left( \frac{1}{n}\sum_{j=0}^{n-1} \abs{f(2\pi j/n)}^p \right)^{1/p}.
\]
Letting $n\to\infty$, the Riemann sums on both sides converge and thus
\[
  \norm{\mathcal{P}_t f}_{L_q(\mathbb{S}^1)}\le \norm{f}_{L_p(\mathbb{S}^1)}
\]
for every trigonometric polynomial $f$. Finally, the LSI follows from Gross's theorem~\cite{MR420249}.
\end{proof}

\begin{proof}[Proof of Corollary~\ref{Coro: LSI for simple random walk}]  Let $\frac{1}{\alpha}$ denote the log-Sobolev constant of the simple random walk on \(\mathbb{Z}_n\). By \cite[Theorem 1]{MR1990534}, we have $2\alpha\le \lambda$, where $\lambda=1-\cos(\frac{2\pi}{n})$ is the spectral gap. So any admissible constant in \eqref{Ineq: LSI for simple random walk} must be at least
\[
\frac{1}{\alpha}\ge \frac{2}{1-\cos(\frac{2\pi}{n})}.
\]
Therefore it suffices to prove \eqref{Ineq: LSI for simple random walk}.\par

Let $f\in L_2^+(\mathbb{Z}_n,\mu_n)$, and write
\[
f=\sum_{j=0}^{n-1} a_j\chi_j, \qquad \chi_j(x)=e^{\frac{2\pi i jx}{n}}.
\]
Since 
\[
  (A_{\psi_n}\chi_j)(x)=\psi_n(j)\chi_j(x),\qquad (K\chi_j)(x)=\frac{1}{2}(\chi_j(x+1)+\chi_j(x-1))=\cos\!\left(\frac{2\pi j}{n}\right)\chi_j(x),
\]
we have
\[
\inner{ f,A_{\psi_n}f}_{L^2(\mathbb{Z}_n,\mu_n)}=\sum_{j=0}^{n-1}\psi_n(j)\abs{a_j}^2
\]
and
\[
  \inner{f,(I-K)f}_{L^2(\mathbb{Z}_n,\mu_n)}=\sum_{j=0}^{n-1}\left( 1-\cos(\frac{2\pi j}{n}) \right)\abs{a_j}^2.
\]
Therefore it remains to show that
\begin{equation}\label{Ineq: comparing eigenvalues}
  \psi_n(j)\leq \frac{1- \cos(\frac{2\pi j}{n})}{1-\cos(\frac{2\pi}{n})},\qquad 0\le j\le n-1.
\end{equation}
Since $\psi_n(j)=\min(j,n-j)\leq \frac{n}{2}$ and $\cos(\frac{2\pi j}{n})=\cos(\frac{2\pi (n-j)}{n})$, it suffices to prove 
\[
j\left( 1-\cos(\frac{2\pi}{n}) \right)\leq 1- \cos(\frac{2\pi j}{n}),\qquad 0\le j\le \floor{\frac{n}{2}}.
\]
Define
\[
\varphi(x)=1-\cos\!\left(\frac{2\pi x}{n}\right),
\qquad 0\le x\le \frac{n}{2}.
\]
For \(a,b\ge 0\) with \(a+b\le \frac{n}{2}\), we have
\[
\varphi(a+b)-\varphi(a)-\varphi(b)=4\sin(\frac{\pi a}{n})\sin(\frac{\pi b}{n})\cos(\frac{\pi(a+b)}{n})\ge 0.
\]
Hence $\varphi$ is superadditive on $[0,\frac{n}{2}]$. By induction,
\[
1- \cos(\frac{2\pi j}{n})=\varphi(j)\ge j\,\varphi(1)
= j\left(1-\cos\!\left(\frac{2\pi}{n}\right)\right),
\qquad 0\le j\le \floor{\frac{n}{2}}.
\]
This proves \eqref{Ineq: comparing eigenvalues}. Applying Theorem~\ref{Thm: Log Sobolev inequality n>=4}, we obtain
\[
\int_{\mathbb{Z}_n} f^2\log f^2\,d\mu_n
-\|f\|_2^2\log \|f\|_2^2
\le
2\langle f,A_{\psi_n}f\rangle_{L^2(\mathbb{Z}_n,\mu_n)}
\le
\frac{2}{1-\cos\!\left(\frac{2\pi}{n}\right)}
\langle f,(I-K)f\rangle_{L^2(\mathbb{Z}_n,\mu_n)}.
\]
Thus inequality \eqref{Ineq: LSI for simple random walk} holds, and the constant is optimal.
\end{proof}

\section{Proof of Lemma \ref{Lem: nonnegativity of Q_2+Q_4-Q_3} for $\mathbb{Z}_{2m+1}$ with $2m+1\geq 5$}\label{Sec: Proof of key lemma, odd n}

We first prove Lemma \ref{Lem: nonnegativity of Q_2+Q_4-Q_3} in the special case $n=5$ by direct computation. The general odd case $n=2m+1\geq 7$ is considerably more involved.

\begin{proof}[Proof of Lemma \ref{Lem: nonnegativity of Q_2+Q_4-Q_3} in the case $n=5$]
A direct computation gives
\[
    Q_{5,2}(x)=\frac{2}{5^2}\sum_{k=1}^2(k-1)\sin^2(\frac{\pi k}{5})x_k^2=\frac{5+\sqrt{5}}{100}x_2^2,
  \]
  \[
    Q_{5,4}(x)=\frac{2\sum_{k=1}^2 x_k^2}{5^4}\sum_{k=1}^2\sin^2(\frac{\pi k}{5})x_k^2=\frac{x_1^2+x_2^2}{2500}((5-\sqrt{5})x_1^2+(5+\sqrt{5})x_2^2),
  \]
  and
  \[
   Q_{5,3}(x)=\frac{1}{5^3}\left(w_{5,(1,2,2)}x_1x_2x_2+w_{5,(1,1,2)}x_1x_1x_2 \right)=\frac{1}{1000}((15+\sqrt{5})x_1x_2^2+(15-\sqrt{5})x_1^2x_2).
  \]
Expanding the difference yields
\[
 \begin{split}
   &Q_{5,2}(x)+Q_{5,4}(x)-Q_{5,3}(x)\\
   &\qquad =\frac{1}{2500}\left((5-\sqrt5)x_1^4+10x_1^2x_2^2+(5+\sqrt5)x_2^4+25(5+\sqrt5)x_2^2-\frac52(15-\sqrt5)x_1^2x_2-\frac52(15+\sqrt5)x_1x_2^2 \right).
 \end{split}
\]
Now complete the square in the bracket:
\[
\begin{split}
  &(5-\sqrt5)x_1^4+10x_1^2x_2^2+(5+\sqrt5)x_2^4+25(5+\sqrt5)x_2^2-\frac52(15-\sqrt5)x_1^2x_2-\frac52(15+\sqrt5)x_1x_2^2 \\
  &\qquad =(5-\sqrt{5}) \left(x_1^2-\frac{5}{8}(7+\sqrt{5}) x_2\right)^2+10 \left(x_1-\frac{15+\sqrt{5}}{8} \right)^2 x_2^2+(\sqrt{5}+5) x_2^4+\frac{25}{16}(7+9 \sqrt{5}) x_2^2.
\end{split}
\]  
It follows that
\[
Q_{5,2}(x)+Q_{5,4}(x)-Q_{5,3}(x)\ge 0.
\]
This proves Lemma \ref{Lem: nonnegativity of Q_2+Q_4-Q_3} for $n=5$.
\end{proof}
We now turn to the case $n=2m+1\ge 7$. For a triple $(a,b,c)$, set
\[
A_{n,(a,b,c)}^{(c)}\coloneqq\frac{w_{n,(a,b,c)}^2}{4\left(\sin^2(\frac{\pi a}{n})+\sin^2(\frac{\pi b}{n})\right)},
\qquad
A_{n,(a,b,c)}^{(b)}\coloneqq\frac{w_{n,(a,b,c)}^2}{4\left(\sin^2(\frac{\pi a}{n})+\sin^2(\frac{\pi c}{n})\right)}.
\]
Hereafter, for a triple $(a,b,c)$, the superscripts indicate whether the second or third entry of the triple is being singled out. For example, if $(a,b,c)=(2,3,4)$, then $A_{n,(a,b,c)}^{(c)}=A_{n,(2,3,4)}^{(c)}=A_{n,(2,3,4)}^{(4)}$, while $A_{n,(a,b,c)}^{(b)}=A_{n,(2,3,4)}^{(b)}=A_{n,(2,3,4)}^{(3)}$.\par 
The following proposition gives a sufficient criterion for the estimate
\[
Q_{n,2}(x)+Q_{n,4}(x)-Q_{n,3}(x)\ge 0
\]
for the odd case $n=2m+1\geq 7$.

\begin{proposition}\label{Prop: Decomposition of theta_e, odd n}
Let $n=2m+1\ge 7$. Suppose that for every $(a,b,c)\in E_n^{(1)}\cup E_n^{(2)}$, one can choose nonnegative numbers $\theta_{n,(a,b,c)}^{(b)}$ and $\theta_{n,(a,b,c)}^{(c)}$ such that
\[
\theta_{n,(a,b,c)}=\theta_{n,(a,b,c)}^{(b)}+\theta_{n,(a,b,c)}^{(c)}.
\]
For $1\le u\le v\le m$, define
\[
L_{uv}\coloneqq \sum_{(u,v,c)\in E_n^{(1)}\cup E_n^{(2)}}
\theta_{n,(u,v,c)}^{(c)} + \sum_{(u,b,v)\in E_n^{(1)}\cup E_n^{(2)}} \theta_{n,(u,b,v)}^{(b)},
\]
and for $1\le j\le m$, define
\[
R_j\coloneqq \sum_{(a,b,j)\in E_n^{(1)}\cup E_n^{(2)}} \theta_{n,(a,b,j)}^{(c)}A_{n,(a,b,j)}^{(c)} + \sum_{(a,j,c)\in E_n^{(1)}\cup E_n^{(2)}} \theta_{n,(a,j,c)}^{(b)}A_{n,(a,j,c)}^{(b)}.
\]
Assume moreover that
\begin{enumerate}
  \item $L_{uv}\le 2$ for all $1\le u<v\le m$;
  \item $L_{uu}\le 1$ for all $1\le u\le m$;
  \item $R_j\le 2(j-1)\sin^2(\frac{\pi j}{n})$ for all $1\le j\le m$.
\end{enumerate}
Then
\[
Q_{n,2}(x)+Q_{n,4}(x)-Q_{n,3}(x)\ge 0,
\qquad \text{for all }x\in \mathbb{R}_+^m.
\]
\end{proposition}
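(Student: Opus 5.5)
Multiply the target inequality by $n^4$ and bound each cubic monomial $w\,x_ax_bx_c$ by AM--GM, splitting the weight $\theta$ according to which variable is "singled out". Write $s_k=\sin^2(\frac{\pi k}{n})$, so that $n^3Q_{n,3}(x)=\sum_{E_n}\theta_{n,(a,b,c)}w_{n,(a,b,c)}x_ax_bx_c$.

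For a triple $(a,b,c)$ with the third entry singled out, use
\[
w\,x_ax_bx_c\le \frac{1}{n}\,\bigl(s_a+s_b\bigr)\,x_a^2x_b^2+\frac{n\,w^2}{4(s_a+s_b)}\,x_c^2 .
\]
This is $2\sqrt{PQ}\ge w\,x_ax_bx_c$ applied to the two summands $P,Q$ on the right. Then note that
\[
(s_a+s_b)\,x_a^2x_b^2=s_a x_a^2 x_b^2+s_b x_b^2 x_a^2 .
\]
The second term in the bound is $n\,A^{(c)}_{n,(a,b,c)}x_c^2$. Symmetrically, when the middle entry is singled out, the pair $\{a,c\}$ gets $(s_a+s_c)x_a^2x_c^2$ and the term $nA^{(b)}x_b^2$ appears. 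Multiply the $(c)$-version by $\theta^{(c)}$, the $(b)$-version by $\theta^{(b)}$, and add. Since $\theta=\theta^{(b)}+\theta^{(c)}$, this gives
\[
n^3Q_{n,3}(x)\le \frac1n\sum_{u\le v}L_{uv}\,(s_u+s_v)\,x_u^2x_v^2\cdot c_{uv}+n\sum_{j}R_j\,x_j^2 .
\]
Here $c_{uv}$ is the bookkeeping factor for diagonal pairs: when $u=v$, the product $(s_u+s_u)x_u^4=2s_ux_u^4$. By definition $L_{uv}$ collects exactly the $\theta$-weights attached to the pair $\{u,v\}$ (ordered $u\le v$), and $R_j$ collects the weights on $x_j^2$.

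Compare with the other two terms. We have
\[
n^4Q_{n,4}=\frac{2}{n}\sum_{k,l}x_k^2\,s_l x_l^2=\frac2n\Bigl(\sum_{u<v}(s_u+s_v)x_u^2x_v^2+\sum_u s_ux_u^4\Bigr),
\qquad
n^4Q_{n,2}=2n\sum_j (j-1)s_jx_j^2 .
\]
So $nQ_{n,3}\cdot n^3$ is dominated termwise provided three things hold. For off-diagonal pairs we need $L_{uv}\le 2$, matching the coefficient $2/n$. For diagonal pairs the AM--GM term is $L_{uu}\cdot 2s_u x_u^4/n$ against $2s_ux_u^4/n$, so we need $L_{uu}\le1$. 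For the quadratic terms we need $R_j\le 2(j-1)s_j$. These are precisely hypotheses (1)--(3), so summing gives $Q_{n,2}+Q_{n,4}-Q_{n,3}\ge0$.

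The main obstacle is careful bookkeeping. First, each ordered triple in $E_n$ has $a\le b\le c$, so the singled-out pair is correctly indexed as $(u,v)$ with $u\le v$. Second, in degenerate triples such as $a=b$, the "pair" is diagonal and contributes to $L_{aa}$. Also, when $b=c$ the monomial is $x_ax_b^2$ and the same AM--GM still applies. One should check that the AM--GM step is valid with $s_a+s_b>0$, which holds since indices lie in $1,\dots,m$. The normalization $n$ in AM--GM must be chosen exactly so that the scales $1/n$ (quartic) and $n$ (quadratic) relative to $n^3Q_{n,3}$ match those of $n^4Q_{n,4}$ and $n^4Q_{n,2}$; the choice above does this.
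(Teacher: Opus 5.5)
Your argument is correct and is essentially the paper's proof. It uses the same AM--GM split $w\,x_ax_bx_c\le \frac1n(s_a+s_b)x_a^2x_b^2+nA^{(c)}x_c^2$ and its $(b)$-analogue, weighted by $\theta^{(c)},\theta^{(b)}$, collected into $L_{uv}$ and $R_j$, and compared termwise with $Q_{n,4}$ and $Q_{n,2}$.

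Two small slips. Your displayed identities should read $n^3Q_{n,4}=\frac2n\sum_{k,l}x_k^2s_lx_l^2$ and $n^3Q_{n,2}=2n\sum_j(j-1)s_jx_j^2$, with $n^3$ rather than $n^4$. The factor $c_{uv}$ is unnecessary, since $(s_u+s_v)x_u^2x_v^2$ already equals $2s_ux_u^4$ when $u=v$.
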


\begin{proof}
We have
\[
n^3Q_{n,3}(x)
=
\sum_{(a,b,c)\in E_n^{(1)}\cup E_n^{(2)}}
\bigl(\theta_{n,(a,b,c)}^{(b)}+\theta_{n,(a,b,c)}^{(c)}\bigr)w_{n,(a,b,c)}x_ax_bx_c.
\]
Fix $(a,b,c)\in E_n^{(1)}\cup E_n^{(2)}$. By the inequality $yz\le \frac{y^2}{n}+\frac{nz^2}{4}$, we obtain
\begin{equation}\label{Ineq: estimate of theta w x for c}
  \theta_{n,(a,b,c)}^{(c)}w_{n,(a,b,c)} x_a x_b x_c \le \frac{1}{n}\theta_{n,(a,b,c)}^{(c)}\left( \sin^2(\frac{\pi a}{n})+\sin^2(\frac{\pi b}{n}) \right)x_a^2x_b^2 +n\theta_{n,(a,b,c)}^{(c)}A_{n,(a,b,c)}^{(c)}x_c^2,
\end{equation}
and similarly,
\begin{equation}\label{Ineq: estimate of theta w x for b}
  \theta_{n,(a,b,c)}^{(b)}w_{n,(a,b,c)} x_a x_b x_c \le \frac{1}{n}\theta_{n,(a,b,c)}^{(b)}\left( \sin^2(\frac{\pi a}{n})+\sin^2(\frac{\pi c}{n}) \right)x_a^2x_c^2+n\theta_{n,(a,b,c)}^{(b)}A_{n,(a,b,c)}^{(b)}x_b^2.
\end{equation}
Fix $(a,b)=(u,v)$ and sum the first term on the right-hand side of \eqref{Ineq: estimate of theta w x for c} over the index $c$. Also fix $(a,c)=(u,v)$ and sum the first term on the right-hand side of \eqref{Ineq: estimate of theta w x for b} over the index $b$. This gives
\[
  \begin{split}
    &\sum_{(u,v,c)\in E_n^{(1)}\cup E_n^{(2)}}\frac{1}{n}\theta_{n,(u,v,c)}^{(c)}\left(\sin^2(\frac{\pi u}{n})+\sin^2(\frac{\pi v}{n})\right)x_u^2x_v^2+\sum_{(u,b,v)\in E_n^{(1)}\cup E_n^{(2)}}\frac{1}{n}\theta_{n,(u,b,v)}^{(b)}\left(\sin^2(\frac{\pi u}{n})+\sin^2(\frac{\pi v}{n})\right)x_u^2x_v^2\\
    &\qquad=\frac{1}{n}\left(\sin^2(\frac{\pi u}{n})+\sin^2(\frac{\pi v}{n})\right)x_u^2x_v^2\left( \sum_{(u,v,c)\in E_n^{(1)}\cup E_n^{(2)}}\theta_{n,(u,v,c)}^{(c)}+\sum_{(u,b,v)\in E_n^{(1)}\cup E_n^{(2)}}\theta_{n,(u,b,v)}^{(b)} \right)\\
    &\qquad =\frac{1}{n}L_{uv}\left(\sin^2(\frac{\pi u}{n})+\sin^2(\frac{\pi v}{n})\right)x_u^2x_v^2.
  \end{split}
\]
Similarly, fixing $c=j$ and summing the second term on the right-hand side of \eqref{Ineq: estimate of theta w x for c} over $(a,b)$, and fixing $b=j$ and summing the second term on the right-hand side of \eqref{Ineq: estimate of theta w x for b} over $(a,c)$, we obtain
\[
  \begin{split}
    &\sum_{(a,b,j)\in E_n^{(1)}\cup E_n^{(2)}}n\theta_{n,(a,b,j)}^{(c)}A_{n,(a,b,j)}^{(c)}x_j^2+\sum_{(a,j,c)\in E_n^{(1)}\cup E_n^{(2)}}n\theta_{n,(a,j,c)}^{(b)}A_{n,(a,j,c)}^{(b)}x_j^2\\
    &\qquad=n x_j^2\left( \sum_{(a,b,j)\in E_n^{(1)}\cup E_n^{(2)}}\theta_{n,(a,b,j)}^{(c)}A_{n,(a,b,j)}^{(c)}+\sum_{(a,j,c)\in E_n^{(1)}\cup E_n^{(2)}}\theta_{n,(a,j,c)}^{(b)}A_{n,(a,j,c)}^{(b)} \right)\\
    &\qquad =n x_j^2R_j.
  \end{split}
\]
Therefore, after collecting terms by the pair $(u,v)$ with $u\le v$ and by the index $j$, we obtain
\[
n^3 Q_{n,3}(x)
\le
\frac{1}{n}\sum_{1\le u\le v\le m}L_{uv}\left(\sin^2(\frac{\pi u}{n})+\sin^2(\frac{\pi v}{n})\right)x_u^2x_v^2+n\sum_{j=1}^m R_jx_j^2.
\]
By assumptions (1) and (2),
\[
\begin{split}
    &\sum_{1\le u\le v\le m}L_{uv}\left(\sin^2(\frac{\pi u}{n})+\sin^2(\frac{\pi v}{n})\right)x_u^2x_v^2\\
    &\qquad=\sum_{u=1}^m 2L_{uu}\sin^2(\frac{\pi u}{n}) x_u^4+\sum_{1\le u<v\le m}L_{uv}\left(\sin^2(\frac{\pi u}{n})+\sin^2(\frac{\pi v}{n})\right)x_u^2x_v^2\\
&\qquad\le\sum_{u=1}^m 2\sin^2(\frac{\pi u}{n}) x_u^4+\sum_{1\le u<v\le m}2\left(\sin^2(\frac{\pi u}{n})+\sin^2(\frac{\pi v}{n})\right)x_u^2x_v^2\\
&\qquad=\sum_{u=1}^m\sum_{v=1}^m \left(\sin^2(\frac{\pi u}{n})+\sin^2(\frac{\pi v}{n})\right)x_u^2x_v^2\\
&\qquad=2\sum_{v=1}^m x_v^2\sum_{u=1}^m\sin^2(\frac{\pi u}{n})x_u^2\\
&\qquad=n^4 Q_{n,4}(x).
  \end{split}
\]
By assumption (3), we also have
\[
\sum_{j=1}^m R_jx_j^2 \le 2\sum_{j=1}^m (j-1)\sin^2(\frac{\pi j}{n})x_j^2 = n^2 Q_{n,2}(x).
\]
Hence, we have,
\[
n^3Q_{n,3}(x) \le n^3Q_{n,4}(x)+n^3Q_{n,2}(x).
\]
This proves the proposition.
\end{proof}

We first derive an upper bound for $A_{n,(a,b,c)}^{(c)}$ that depends only on the third entry $c$.
\begin{lemma}[Upper bounds for $A_{n,(a,b,c)}^{(c)}$]\label{Lem: Upper bound of A}
For $n\ge 4$, let $(a,b,c)\in E_n^{(1)}\cup E_n^{(2)}$. Then
\[
  A_{n,(a,b,c)}^{(c)}\leq\begin{cases}
    \sin^2(\frac{\pi c}{n})\left( 1+\frac{\cos^2(\frac{\pi c}{n})}{4} \right),&(a,b,c)\in E_n^{(1)},\\
    \sin^2(\frac{\pi c}{n})\left( 1+\frac{\cos^2(\frac{\pi c}{n})}{2} \right),&(a,b,c)\in E_n^{(2)}.
  \end{cases}
  \]
\end{lemma}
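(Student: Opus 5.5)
The plan is to reduce the statement to a one-variable inequality. Write $s_x=\sin^2(\frac{\pi x}{n})$ and $S=s_a+s_b$. Then
\[
A^{(c)}_{n,(a,b,c)}=\frac{(S+s_c)^2}{4S}=s_c\,f(t),\qquad t=\frac{S}{s_c},\qquad f(t)=\frac{(1+t)^2}{4t}.
\]
The function $f$ is decreasing on $(0,1]$ and increasing on $[1,\infty)$. So it suffices to locate $t$ in an interval and evaluate $f$ at the relevant endpoint. Put $\alpha=\frac{\pi a}{n}$, $\beta=\frac{\pi b}{n}$, $\gamma=\frac{\pi c}{n}$ and $C=\cos\gamma\ge 0$; note $C\ge0$ because $c\le m$ forces $\gamma<\frac{\pi}{2}$. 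The tool for locating $t$ is the identity
\[
S=\sin^2\alpha+\sin^2\beta=1-\cos(\alpha+\beta)\cos(\alpha-\beta),
\]
combined with the trivial bound $0\le\cos(\alpha-\beta)\le 1$. The lower bound $\cos(\alpha-\beta)\ge0$ holds since $|\alpha-\beta|<\frac{\pi}{2}$.

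\textbf{Case $(a,b,c)\in E_n^{(1)}$.} Here $\alpha+\beta=\gamma$, so $S=1-C\cos(\alpha-\beta)$. This gives $1-C\le S\le 1$. Actually the sharper bound $S\le \sin^2\gamma=s_c$ holds, which is exactly the superadditivity identity $\sin^2(\alpha+\beta)-\sin^2\alpha-\sin^2\beta=2\sin\alpha\sin\beta\cos(\alpha+\beta)\ge0$ used in the proof of Corollary~\ref{Coro: LSI for simple random walk}. Since $s_c=(1-C)(1+C)$, we get
\[
t\in\Big[\tfrac{1}{1+C},\,1\Big].
\]
On this range $f$ is decreasing, so $f(t)\le f\big(\tfrac1{1+C}\big)=\frac{(2+C)^2}{4(1+C)}=1+\frac{C^2}{4(1+C)}\le 1+\frac{C^2}{4}$, which is the first bound.

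\textbf{Case $(a,b,c)\in E_n^{(2)}$.} Here $\alpha+\beta=\pi-\gamma$, so $\cos(\alpha+\beta)=-C$, $s_c=\sin^2(\alpha+\beta)$, and $S=1+C\cos(\alpha-\beta)\in[1,1+C]$. Hence
\[
t\in\Big[\tfrac{1}{1-C^2},\,\tfrac{1}{1-C}\Big]\subset[1,\infty).
\]
Now $f$ is increasing, so $f(t)\le f\big(\tfrac1{1-C}\big)=\frac{(2-C)^2}{4(1-C)}=1+\frac{C^2}{4(1-C)}$. To conclude $f(t)\le 1+\frac{C^2}{2}$ we need $1-C\ge\frac12$, i.e.\ $\gamma\ge\frac{\pi}{3}$. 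This holds because $c$ is the largest entry and $a+b+c=n$, so $c\ge n/3$.

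The main point of care is this last step in the $E_n^{(2)}$ case. The crude range $S\le 1+C$ only yields the constant $\frac{1}{4(1-C)}$, and one must use the ordering $a\le b\le c$ to bound $C\le\frac12$. A secondary check is that $C\ge0$ in both cases (since $c\le m$), so that the monotonicity direction of $f$ is used correctly.
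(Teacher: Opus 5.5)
Your argument is correct, and it reaches the bound by a different route than the paper. The paper first derives the exact identity
\[
A^{(c)}_{n,(a,b,c)}=\sin^2(\tfrac{\pi c}{n})+\cos^2(\tfrac{\pi c}{n})\,\frac{s_as_b}{s_a+s_b}
\]
from $(s_a+s_b-s_c)^2=4s_as_b\cos^2(\frac{\pi c}{n})$. It then applies $\frac{xy}{x+y}\le\frac{x+y}{4}$ and bounds $S$ crudely: $S\le s_c$ on $E_n^{(1)}$ by superadditivity, and $S\le 2s_c$ on $E_n^{(2)}$ using only $s_a,s_b\le s_c$.

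You instead write $A^{(c)}_{n,(a,b,c)}/s_c$ as the one-variable function $f(t)=\frac{(1+t)^2}{4t}$ of $t=S/s_c$. Using $S=1-\cos(\alpha+\beta)\cos(\alpha-\beta)$, you show $t$ lies on one side of the minimum $t=1$, and you never need the product $s_as_b$. This gives sharper constants, $\frac{C^2}{4(1+C)}$ and $\frac{C^2}{4(1-C)}$, both attained at $\alpha=\beta$. In the $E_n^{(2)}$ case the ordering $a\le b\le c$ enters through $c\ge n/3$ (i.e.\ $C\le\frac12$) rather than through $s_a,s_b\le s_c$.

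In exchange, the paper's route yields the exact formula for $A^{(c)}_{n,(a,b,c)}$ as a by-product. It reuses this formula in the proof of Theorem~\ref{Thm: Choice of theta for odd n geq 7} to show $0<\Delta_n\le(m-1)\cos^2(\frac{\pi m}{n})$. Your sharper constants are not needed anywhere later in the paper.
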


\begin{proof}

Since $(a,b,c)\in E_n^{(1)}\cup E_n^{(2)}$, we have either $a+b=c$ or $a+b+c=n$. In both cases,
\[
\sin^2(\frac{\pi c}{n})=\sin^2(\frac{\pi(a+b)}{n}).
\]
Hence
\[
  \sin^2(\frac{\pi a}{n})+\sin^2(\frac{\pi b}{n})-\sin^2(\frac{\pi c}{n})=\pm 2\sin(\frac{\pi a}{n})\sin(\frac{\pi b}{n})\cos \left( \frac{\pi c}{n} \right),
\]
and therefore
\[
 \left( \sin^2(\frac{\pi a}{n})+\sin^2(\frac{\pi b}{n})-\sin^2(\frac{\pi c}{n}) \right)^2=4\sin^2(\frac{\pi a}{n})\sin^2(\frac{\pi b}{n})\cos^2(\frac{\pi c}{n}).
\]
It follows that
\[
\begin{split}
      &\left( \sin^2(\frac{\pi a}{n})+\sin^2(\frac{\pi b}{n})+\sin^2(\frac{\pi c}{n}) \right)^2\\
      &\qquad=\left( \sin^2(\frac{\pi a}{n})+\sin^2(\frac{\pi b}{n})-\sin^2(\frac{\pi c}{n}) \right)^2+4\sin^2(\frac{\pi c}{n})\left(\sin^2(\frac{\pi a}{n})+\sin^2(\frac{\pi b}{n})\right)\\
      &\qquad=4\sin^2(\frac{\pi a}{n})\sin^2(\frac{\pi b}{n})\cos^2(\frac{\pi c}{n})+4\sin^2(\frac{\pi c}{n})\left(\sin^2(\frac{\pi a}{n})+\sin^2(\frac{\pi b}{n})\right)\\
      &\qquad=4\left(\sin^2(\frac{\pi a}{n})+\sin^2(\frac{\pi b}{n})\right)\left( \sin^2(\frac{\pi c}{n})+\cos^2(\frac{\pi c}{n})\frac{\sin^2(\frac{\pi a}{n})\sin^2(\frac{\pi b}{n})}{\sin^2(\frac{\pi a}{n})+\sin^2(\frac{\pi b}{n})} \right).
  \end{split}
\]
Consequently,
\begin{equation}\label{Eqn: formula of A_e^{(c)}}
 A_{n,(a,b,c)}^{(c)}=\frac{(\sin^2(\frac{\pi a}{n})+\sin^2(\frac{\pi b}{n})+\sin^2(\frac{\pi c}{n}))^2}{4\left(\sin^2(\frac{\pi a}{n})+\sin^2(\frac{\pi b}{n})\right)}=\sin^2(\frac{\pi c}{n})+\cos^2(\frac{\pi c}{n})\frac{\sin^2(\frac{\pi a}{n})\sin^2(\frac{\pi b}{n})}{\sin^2(\frac{\pi a}{n})+\sin^2(\frac{\pi b}{n})}.
\end{equation}
Using $\frac{xy}{x+y}\leq \frac{x+y}{4}$ for $x,y>0$, we obtain
\begin{equation}\label{Eqn: estimation of A_e^{(c)} for sigma and delta}
  A_{n,(a,b,c)}^{(c)}\le \sin^2(\frac{\pi c}{n})+\frac{\cos^2(\frac{\pi c}{n})}{4}\left(\sin^2(\frac{\pi a}{n})+\sin^2(\frac{\pi b}{n})\right).
\end{equation}

If $(a,b,c)\in E_n^{(1)}$, then $a+b=c\le m$. Hence $\cos(\frac{\pi c}{n})\ge 0$, and so
\[
\sin^2(\frac{\pi a}{n})+\sin^2(\frac{\pi b}{n})-\sin^2(\frac{\pi c}{n})=-2\sin(\frac{\pi a}{n})\sin(\frac{\pi b}{n})\cos(\frac{\pi (a+b)}{n})\leq 0.
\]
Thus $\sin^2(\frac{\pi a}{n})+\sin^2(\frac{\pi b}{n})\le \sin^2(\frac{\pi c}{n})$. Substituting this into \eqref{Eqn: estimation of A_e^{(c)} for sigma and delta} gives
\[
A_{n,(a,b,c)}^{(c)}\le \sin^2(\frac{\pi c}{n})\left( 1+\frac{\cos^2(\frac{\pi c}{n})}{4} \right),
\]
which proves the $E_n^{(1)}$-case.

If $(a,b,c)\in E_n^{(2)}$, then $1\le a\le b\le c\le m$. Since $\sin x$ is
increasing on $[0,\frac{\pi}{2}]$, we have $\sin^2(\frac{\pi a}{n})\le \sin^2(\frac{\pi c}{n}),\sin^2(\frac{\pi b}{n})\le \sin^2(\frac{\pi c}{n})$, and hence $\sin^2(\frac{\pi a}{n})+\sin^2(\frac{\pi b}{n})\le 2\sin^2(\frac{\pi c}{n})$. Substituting this into \eqref{Eqn: estimation of A_e^{(c)} for sigma and delta} gives
\[
A_{n,(a,b,c)}^{(c)}\le \sin^2(\frac{\pi c}{n})\left( 1+\frac{\cos^2(\frac{\pi c}{n})}{2} \right),
\]
which proves the $E_n^{(2)}$-case.
\end{proof}

The next lemma counts the weights $\theta_{n,(a,b,c)}$ with fixed third entry $c=j$. It will be used later to estimate $R_j$.

\begin{lemma}\label{Lem: Counting of M_j}
For $n\geq 4$, define for $1\le j\le \floor{\frac{n-1}{2}}$
\[
M_j^{(1)}\coloneqq
\sum_{(a,b,j)\in E_n^{(1)}}\theta_{n,(a,b,j)},
\qquad
M_j^{(2)}\coloneqq
\sum_{(a,b,j)\in E_n^{(2)}}\theta_{n,(a,b,j)}.
\]
For $n=2m$ even, define
\[
M_m^{(3)}\coloneqq \sum_{(a,b,m)\in E_n^{(3)}}\theta_{n,(a,b,m)}.
\]
Then
\[
M_j^{(1)}=j-1,
\qquad
M_j^{(2)}=
\begin{cases}
0, & 3j<n,\\
\frac{1}{3}, & 3j=n,\\
3j-n, & 3j>n,
\end{cases}
\]
and, in the even case,
\[
M_m^{(3)}=m-1.
\]
\end{lemma}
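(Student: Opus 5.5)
The plan is to compute each sum directly from the definition $\theta_{n,(a,b,c)}=N_{n,(a,b,c)}/3$ and the explicit values of $N_{n,(a,b,c)}$ in Lemma~\ref{Lem: combinatorial description of summation indices}. The guiding observation is that $\theta$ weights an unordered triple by one third of its number of distinct orderings. So for fixed largest entry $j$, summing $\theta$ should amount to counting \emph{ordered} pairs $(a,b)$ satisfying the defining linear relation, up to the appropriate normalization. Concretely, $\theta=2$ when $a<b$ and $\theta=1$ when $a=b$ (for $E^{(1)}$ and $E^{(3)}$), which is exactly the number of ordered pairs $(a,b)$ or $(b,a)$ represented.

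\emph{Case $M_j^{(1)}$.} The triples $(a,b,j)\in E_n^{(1)}$ are given by $a+b=j$ with $1\le a\le b$; the constraint $b\le j\le \floor{(n-1)/2}$ is automatic. By the observation above, $M_j^{(1)}$ equals the number of ordered pairs $(a,b)$ with $a,b\ge1$ and $a+b=j$, which is $j-1$. The computation of $M_m^{(3)}$ in the even case is identical: ordered pairs with $a+b=m$ and $a,b\ge 1$ number $m-1$, and $b\le m-1$ holds automatically.

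\emph{Case $M_j^{(2)}$.} This is the main step. The triples are $(a,b,j)$ with $a+b=n-j$ and $1\le a\le b\le j$. If $3j<n$, then $a+b=n-j>2j$ contradicts $a\le b\le j$, so the set is empty and $M_j^{(2)}=0$. If $3j=n$, the only triple is $a=b=j$, with $N=1$, giving $\theta=1/3$. For $3j>n$, the triple $(j,j,j)$ does not occur. The triples with exactly two equal entries contribute $\theta=1$ and those with distinct entries contribute $\theta=2$, which equals the number of ordered pairs $(a,b)$ they represent, \emph{except} for triples with $b=j$. For such triples the multiplicity $N$ counts permutations in which $j$ appears in more than one slot, so the weight no longer matches the naive ordered count. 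I would handle this by splitting into subcases:
\begin{itemize}
\item $a<b<j$: $N=6$, so $\theta=2$;
\item $a=b<j$: $N=3$, so $\theta=1$;
\item $a<b=j$: $N=3$, so $\theta=1$.
\end{itemize}
Then $M_j^{(2)}$ is the number of ordered pairs $(a,b)$ with $a+b=n-j$ and $a,b\le j-1$, plus the number of unordered triples with $b=j$ and $a=n-2j$. By the subcase list the latter contributes weight $1$. Ordered pairs with $a+b=s:=n-j$ and $1\le a,b\le j-1$ number $2(j-1)-s+1=3j-n-1$, valid since $j<s$ and $s\le 2j-2$, i.e.\ $3j\ge n+2$. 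The extra triple $(n-2j,j,j)$ requires $1\le n-2j<j$, which holds because $j\le\floor{(n-1)/2}$ and $3j>n$. The total is $3j-n$.

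The remaining point is the edge case $3j=n+1$. Here the ordered-pair count is $0$, since $s=2j-1>2j-2$, and the extra triple $(j-1,j,j)$ still contributes $1$, so the formula $3j-n$ persists. I expect this boundary bookkeeping, namely reconciling the multiplicities when an entry equals $j$, to be the only delicate part of the argument.
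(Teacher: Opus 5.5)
Your proof is correct and is essentially the paper's direct count. The paper parametrizes the triples as $(a,j-a,j)$ and $(n-2j+t,j-t,j)$ and tallies weights $1$ and $2$ by the parity of $j$ and of $3j-n$; you instead identify $\theta$ with the number of ordered pairs $(a,b)$ a triple represents, which removes those parity splits, and you isolate the same exceptional triple $(n-2j,j,j)$ that the paper treats as its ``first triple.'' Your separate check of $3j=n+1$ is harmless but unnecessary, since the count $2(j-1)-s+1$ holds for all $j\le s\le 2j-1$ and gives $0$ in that case.
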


\begin{proof}
We first compute $M_j^{(1)}$. If $(a,b,c)\in E_n^{(1)}$ with $c=j$, then $a+b=j$ and $1\le a\le b\leq j$, so the admissible triples are exactly
\[
(a,b,j)=(a,j-a,j),\qquad 1\le a\le j-a.
\]
If $j$ is odd, there are $\frac{j-1}{2}$ such triples, all with $a<b$, hence $\theta_{n,(a,b,j)}=2$. Therefore
\[
M_j^{(1)}=2\cdot \frac{j-1}{2}=j-1.
\]
If $j$ is even, there are $\frac{j}{2}-1$ triples with $a<b$, each with $\theta_{n,(a,b,j)}=2$, and one triple $(\frac{j}{2},\frac{j}{2},j)$ with $\theta_{n,(\frac{j}{2},\frac{j}{2},j)}=1$. Hence
\[
M_j^{(1)}=2\left(\frac{j}{2}-1\right)+1=j-1.
\]

In the even case, the computation of $M_m^{(3)}$ is identical, because the triples in $E_n^{(3)}$ are precisely $(a,m-a,m)$ with $1\le a\le m-a$. Thus
\[
M_m^{(3)}=m-1.
\]

Next we compute $M_j^{(2)}$. If $(a,b,c)\in E_n^{(2)}$ and $c=j$, then
\[
1\le a\le b\le j,\qquad a+b+j=n.
\]
We consider three cases.

If $3j<n$, then $a,b,c\le j$, so
\[
a+b+c\le 3j<n,
\]
which is impossible. Hence $M_j^{(2)}=0$.

If $3j=n$, then necessarily $(a,b,c)=(j,j,j)$, and therefore
\[
M_j^{(2)}=\frac{1}{3}.
\]

Finally, suppose that $3j>n$. Then every admissible triple can be written as
\[
(a,b,c)=(n-2j+t,j-t,j),
\]
where the condition $a\le b\le j$ is equivalent to
\[
0\le t\le \left\lfloor\frac{3j-n}{2}\right\rfloor.
\]
If $3j-n$ is odd, then the first triple $(n-2j,j,j)$ has weight $\theta_{n,(n-2j,j,j)}=1$, and the remaining $\floor{\frac{3j-n}{2}}=\frac{3j-n-1}{2}$ triples have weight $\theta_{n,(n-2j+t,j-t,j)}=2$. Therefore
\[
M_j^{(2)}=1+2\cdot\frac{3j-n-1}{2}=3j-n.
\]
If $3j-n$ is even, then the first triple $(n-2j,j,j)$ and the last triple $\left(\frac{n-j}{2},\frac{n-j}{2},j\right)$ have weight $1$, while the remaining $\floor{\frac{3j-n}{2}}-1=\frac{3j-n}{2}-1$ triples have weight $\theta_{n,(n-2j+t,j-t,j)}=2$. Therefore
\[
M_j^{(2)}
=
1+2\left(\frac{3j-n}{2}-1\right)+1
=
3j-n.
\]
This completes the proof.
\end{proof}

As a second step in verifying the bound on $R_j$, define
\[
B_j\coloneqq\sum_{(a,b,j)\in E_n^{(1)}\cup E_n^{(2)}}\theta_{n,(a,b,j)} A_{n,(a,b,j)}^{(c)},
\]
for $1\le j\le \floor{\frac{n-1}{2}}$. Since $\theta_{n,(a,b,c)}^{(c)}\le \theta_{n,(a,b,c)}$, the next lemma gives an upper bound for the term in $R_j$ related to the superscript $c$.
\begin{lemma}[Upper bound for $B_j$]\label{Lem: upper bound of B_j}
For $n\ge 4$, we have
\[
B_j\leq 2(j-1)\sin^2(\frac{\pi j}{n}),\qquad 1\leq j\leq \floor{\frac{n}{2}}-1.
\]
\end{lemma}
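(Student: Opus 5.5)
The plan is to bound $B_j$ termwise with Lemma~\ref{Lem: Upper bound of A} and then count the terms with Lemma~\ref{Lem: Counting of M_j}. Write $s=\sin^2(\frac{\pi j}{n})$ and $\kappa=\cos^2(\frac{\pi j}{n})$. Every triple $(a,b,j)\in E_n^{(1)}$ satisfies $A^{(c)}_{n,(a,b,j)}\le s(1+\kappa/4)$, and every triple in $E_n^{(2)}$ satisfies $A^{(c)}_{n,(a,b,j)}\le s(1+\kappa/2)$. Both bounds are independent of $(a,b)$. Summing against the weights $\theta$ therefore gives
\[
B_j\le s\Big(M_j^{(1)}\big(1+\tfrac{\kappa}{4}\big)+M_j^{(2)}\big(1+\tfrac{\kappa}{2}\big)\Big)=s\Big((j-1)\big(1+\tfrac{\kappa}{4}\big)+M_j^{(2)}\big(1+\tfrac{\kappa}{2}\big)\Big).
\]
It then suffices to prove
\[
M_j^{(2)}\big(1+\tfrac{\kappa}{2}\big)\le (j-1)\big(1-\tfrac{\kappa}{4}\big)
\]
for $1\le j\le\lfloor n/2\rfloor-1$. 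I split into the three cases of Lemma~\ref{Lem: Counting of M_j}.

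\emph{Case $3j<n$.} Here $M_j^{(2)}=0$, and the inequality is trivial because $\kappa\le1$.

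\emph{Case $3j=n$.} Here $M_j^{(2)}=\frac13$, and $j\ge2$ since $n\ge4$. The left side is at most $\frac13\cdot\frac32=\frac12$. The right side is at least $1\cdot\frac34$, so the inequality holds.

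\emph{Case $3j>n$.} This is the main case. Put $d=n-2j$. The range $j\le\lfloor n/2\rfloor-1$ gives $d\ge2$, and $3j>n$ gives $d<n/3$. Then $M_j^{(2)}=3j-n=j-d$. Also $\cos(\frac{\pi j}{n})=\sin(\frac{\pi d}{2n})$, so $\kappa\le \frac{\pi^2d^2}{4n^2}$.

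After expanding, the target inequality is equivalent to
\[
\kappa\Big(\tfrac{j-d}{2}+\tfrac{j-1}{4}\Big)\le d-1.
\]
The bracket is at most $\frac{3j}{4}\le\frac{3n}{8}$. Hence the left side is at most $\frac{3\pi^2d^2}{32n}$. Since $d\ge2$, we have $d-1\ge d/2$, so it suffices that $\frac{3\pi^2 d}{32 n}\le\frac12$, i.e. $d\le\frac{16n}{3\pi^2}\approx0.54\,n$. This holds because $d<n/3$.

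The only delicate point is this last case. A crude bound on $\kappa$ (for instance $\kappa<1/4$, which only uses $j>n/3$) works just for $j\le7$. One must exploit that $\kappa$ decays quadratically in the distance $d$ from $n/2$, while the excess $M_j^{(2)}-(j-1)=1-d$ grows only linearly in $d$. The estimate $\sin x\le x$ captures exactly this.
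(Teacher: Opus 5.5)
Your proof is correct and follows essentially the same route as the paper. It uses the same termwise bound via Lemma~\ref{Lem: Upper bound of A} and Lemma~\ref{Lem: Counting of M_j} and the same three cases. In the case $3j>n$ it reduces to the same inequality $(3j-2d-1)\cos^2(\frac{\pi j}{n})\le 4(d-1)$, which is the paper's $(7j-2n-1)\cos^2(\frac{\pi j}{n})\le 4(n-2j-1)$, and closes it with $\sin x\le x$. The only difference is cosmetic: you parametrize uniformly by $d=n-2j$, whereas the paper writes $j=\lfloor n/2\rfloor-t$ and treats odd and even $n$ separately.
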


\begin{proof}

By Lemma \ref{Lem: Upper bound of A} and Lemma \ref{Lem: Counting of M_j}, we have
\begin{equation}\label{Ineq: Estimate of B_j}
  \begin{split}
     B_j=&\sum_{(a,b,j)\in E_n^{(1)}\cup E_n^{(2)}}\theta_{n,(a,b,j)}A_{n,(a,b,j)}^{(c)}\\
\leq &\sin^2(\frac{\pi j}{n})\left[ 
\left( 1+\frac{\cos^2(\frac{\pi j}{n})}{4} \right)\sum_{(a,b,j)\in E_n^{(1)}}\theta_{n,(a,b,j)} +\left( 1+\frac{\cos^2(\frac{\pi j}{n})}{2} \right)\sum_{(a,b,j)\in E_n^{(2)}}\theta_{n,(a,b,j)} \right]\\
=&\sin^2(\frac{\pi j}{n})\left[ 
\left( 1+\frac{\cos^2(\frac{\pi j}{n})}{4} \right)M_j^{(1)} +\left( 1+\frac{\cos^2(\frac{\pi j}{n})}{2} \right)M_j^{(2)} \right].
  \end{split}
\end{equation}

If $3j<n$, then $M_j^{(2)}=0$, so \eqref{Ineq: Estimate of B_j} gives
\[
B_j\le (j-1)\sin^2(\frac{\pi j}{n})\Bigl(1+\frac{\cos^2(\frac{\pi j}{n})}{4}\Bigr)\le 2(j-1)\sin^2(\frac{\pi j}{n}).
\]

If $3j=n\geq 4$, then $\cos^2(\frac{\pi j}{n})=\cos^2(\frac{\pi}{3})=\frac{1}{4}$, $j=\frac{n}{3}\ge 2$, and $M_j^{(2)}=\frac{1}{3}$. Hence
\[
B_j
\le
\sin^2(\frac{\pi j}{n})\left[
(j-1)\left(1+\frac{1}{16}\right)
+
\frac{1}{3}\left(1+\frac{1}{8}\right)
\right]\leq 2(j-1)\sin^2(\frac{\pi j}{n}).
\]
Finally, suppose that $3j>n$. Then $M_j^{(1)}=j-1$ and $M_j^{(2)}=3j-n$, so by \eqref{Ineq: Estimate of B_j} it suffices to prove
\[
(j-1)\left( 1+\frac{\cos^2(\frac{\pi j}{n})}{4} \right)+(3j-n)\left( 1+\frac{\cos^2(\frac{\pi j}{n})}{2} \right)\leq 2(j-1).
\]
This is equivalent to
\[
(7j-2n-1)\cos^2(\frac{\pi j}{n}) \leq 4(n-2j-1).
\]
Since $7j-2n-1=j+2(3j-n)-1>j-1\ge 0$, this is in turn equivalent to
\[
\cos^2(\frac{\pi j}{n})\le \frac{4(n-2j-1)}{7j-2n-1}.
\]
Write $j=\floor{n/2}-t$. Because $j\le \floor{n/2}-1$ and $3j>n\geq 2\floor{n/2}$, we have $1\le t<\frac{\floor{n/2}}{3}$. If $n=2m+1$, then by $2t+1\le 3t$ for $t\ge 1$, the bound $t<\frac{m}{3}$, and the numerical fact $9\pi^2<128$, we have
\[
\cos^2(\frac{\pi j}{n})=\sin^2\left(\frac{(2t+1)\pi}{4m+2}\right)\le \left(\frac{(2t+1)\pi}{4m+2}\right)^2<\frac{9\pi^2 t^2}{16m^2}<\frac{8t}{3m}<\frac{8t}{3m-7t-3}=\frac{4(n-2j-1)}{7j-2n-1}.
\]
If $n=2m$, then by $t<\frac{m}{3}$ and $\pi^2<16$, we have 
\[
\cos^2(\frac{\pi j}{n})=\sin^2\left(\frac{\pi t}{2m}\right)\le\left(\frac{\pi t}{2m}\right)^2<\frac{4t}{3m}\le\frac{8t-4}{3m-7t-1}=\frac{4(n-2j-1)}{7j-2n-1}.
\]
Thus, in all cases,
\[
B_j\le 2(j-1)\sin^2\left(\frac{\pi j}{n}\right),
\qquad 1\le j\le \floor{\frac{n}{2}}-1.
\]
This proves the lemma.
\end{proof}

Now we can conclude Lemma \ref{Lem: nonnegativity of Q_2+Q_4-Q_3} in the odd case $n=2m+1\geq 7$ from the following theorem.
\begin{theorem}[Explicit choice of $\theta_{n,(a,b,c)}^{(b)}$ and $\theta_{n,(a,b,c)}^{(c)}$ for odd $n\ge 7$]
\label{Thm: Choice of theta for odd n geq 7}
For every odd $n=2m+1\ge 7$, let
\[
(a_*,b_*,c_*)=(2,m-1,m).
\]
Define
\[
\Delta_n\coloneqq B_m-2(m-1)\sin^2(\frac{\pi m}{n}),
\qquad
\delta_n\coloneqq \frac{\Delta_n}{A_{n,(a_*,b_*,c_*)}^{(c)}}.
\]
Choose
\[
\theta_{n,(a,b,c)}^{(b)}=
\begin{cases}
\delta_n, & (a,b,c)=(a_*,b_*,c_*),\\
0, & (a,b,c)\neq (a_*,b_*,c_*),
\end{cases}\qquad \theta_{n,(a,b,c)}^{(c)}=
\begin{cases}
\theta_{n,(a_*,b_*,c_*)}-\delta_n, & (a,b,c)=(a_*,b_*,c_*),\\
\theta_{n,(a,b,c)}, & (a,b,c)\neq (a_*,b_*,c_*).
\end{cases}
\]
Then $\theta_{n,(a,b,c)}^{(b)},\theta_{n,(a,b,c)}^{(c)}\ge 0$ for all $(a,b,c)\in E_n^{(1)}\cup E_n^{(2)}$. Moreover, for this choice of nonnegative numbers, the hypotheses of Proposition \ref{Prop: Decomposition of theta_e, odd n} are satisfied. Consequently,
\[
Q_{n,2}(x)+Q_{n,4}(x)-Q_{n,3}(x)\ge 0,
\qquad \text{for all }x\in\mathbb{R}_+^m.
\]
\end{theorem}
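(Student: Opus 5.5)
Our plan is to verify hypotheses (1)--(3) of Proposition~\ref{Prop: Decomposition of theta_e, odd n} for this choice, after first checking that the split is admissible.

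\textbf{Step 1: the excess $\Delta_n$.} With all weight placed on the superscript $(c)$, we have $R_j=B_j$ for every $j$. Lemma~\ref{Lem: upper bound of B_j} already gives $B_j\le 2(j-1)\sin^2(\frac{\pi j}{n})$ for $j\le m-1$. The only index left uncontrolled is $j=m$, which is why we define $\Delta_n=B_m-2(m-1)\sin^2(\frac{\pi m}{n})$.

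We compute $B_m$ exactly using \eqref{Eqn: formula of A_e^{(c)}}. Here $\cos^2(\frac{\pi m}{n})=\sin^2(\frac{\pi}{2n})$ is tiny, so each $A^{(c)}_{n,(a,b,m)}$ is $\sin^2(\frac{\pi m}{n})+O(n^{-2})$. Lemma~\ref{Lem: Counting of M_j} gives $M_m^{(1)}+M_m^{(2)}=(m-1)+(3m-n)=2m-2$. Hence $\Delta_n$ equals $\cos^2(\frac{\pi m}{n})$ times a sum of terms $\frac{s_as_b}{s_a+s_b}$ (with $s_k=\sin^2(\pi k/n)$), so $\Delta_n\ge 0$ and $\Delta_n=O(m/n^2)=O(1/n)$.

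To get $\delta_n\ge 0$ and $\theta^{(c)}_{n,(a_*,b_*,c_*)}\ge 0$, we need $\delta_n\le\theta_{n,(2,m-1,m)}=2$. Note that $(2,m-1,m)\in E_n^{(2)}$ since $2+(m-1)+m=2m+1$, and $2<m-1$ for $m\ge 4$. For $m=3$ the triple is $(2,2,3)$ with weight $1$, so the case $n=7$ needs care. Since $A^{(c)}_{n,(2,m-1,m)}\ge \sin^2(\frac{\pi m}{n})\ge \sin^2(3\pi/7)$, it suffices to bound $\Delta_n\le \theta_{n,(a_*,b_*,c_*)}\sin^2(\frac{\pi m}{n})$. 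We would do this crudely: bound each $\frac{s_as_b}{s_a+s_b}\le \frac14$ and use $\cos^2(\frac{\pi m}{n})\le (\frac{\pi}{2n})^2$. Then $\Delta_n\le \frac{2m-2}{4}\cdot\frac{\pi^2}{4n^2}$, which is far below $\theta\sin^2$ for all $n\ge7$; we would check $n=7$ numerically.

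\textbf{Step 2: $R_j$.}
\begin{itemize}
\item For $j=m$, moving $\delta_n$ out of the $(c)$-slot of $(2,m-1,m)$ lowers $R_m$ by exactly $\delta_n A^{(c)}_{n,(a_*,b_*,c_*)}=\Delta_n$, so $R_m=2(m-1)\sin^2(\frac{\pi m}{n})$.
\item For $j=m-1$, $R_{m-1}=B_{m-1}+\delta_nA^{(b)}_{n,(2,m-1,m)}$. This is the main obstacle: Lemma~\ref{Lem: upper bound of B_j} gives no explicit slack. We would reuse its proof, in which the inequality $(7j-2n-1)\cos^2\le 4(n-2j-1)$ has substantial slack at $j=m-1$, i.e.\ $t=1$. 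The slack in $B_{m-1}$ is at least $\sin^2(\frac{\pi(m-1)}{n})\big[4(n-2j-1)-(7j-2n-1)\cos^2\big]/4$, which is of order $1$, whereas $\delta_nA^{(b)}=O(1/n)$. For $A^{(b)}$ we use $A^{(b)}\le w^2/(4s_m)\le 9/4$ (or sharper). Small $m$, where the condition $3j>n$ may fail, would be checked directly.
\item All other $R_j=B_j$ are handled by the lemma.
\end{itemize}

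\textbf{Step 3: $L_{uv}$.} Without the modification, $L_{uv}=\sum_c\theta_{n,(u,v,c)}$. For fixed $u\le v$, the third entry $c\ge v$ is forced to be $u+v$ (set $E^{(1)}$) or $n-u-v$ (set $E^{(2)}$), so there are at most two triples. Each has weight $2$ if $u<v$ and $1$ if $u=v$, but could both occur simultaneously?

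Both occur only when $u+v\le m$ and $n-u-v\le m$ with $n-u-v\ge v$. The conditions $u+v\le m$ and $u+v\ge m+1$ are incompatible, so at most one triple exists. This gives $L_{uv}\le 2$, and $L_{uu}\le 1$ since the weight is $\theta\le1$ when $u=v$ (the triple $(u,u,c)$ with $c>u$ has $N=3$, or $N=1$ if $u=c$).

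The modification moves $\delta_n$ from the pair $(2,m-1)$ to the pair $(2,m)$. The pair $(2,m-1)$ only decreases. For $(2,m)$, no triple $(2,m,c)$ exists with $c\ge m$ other than impossible ones, so $L_{2m}=\delta_n\le2$ by Step 1.

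Proposition~\ref{Prop: Decomposition of theta_e, odd n} then yields the claim.
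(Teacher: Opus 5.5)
Your proposal is correct and follows the paper's proof essentially step for step: the exact cancellation $R_m=2(m-1)\sin^2(\frac{\pi m}{n})$, absorbing $\delta_nA^{(b)}_{n,(2,m-1,m)}$ into the slack of the $B_{m-1}$ estimate from Lemma~\ref{Lem: upper bound of B_j} (with $n=7,9$ handled separately because $3(m-1)\le n$ there), and the ``at most one triple per pair $(u,v)$'' argument for the $L_{uv}$. Two constants need repair, though neither affects the outcome. First, $\frac{s_as_b}{s_a+s_b}\le\frac14$ fails for some $E_n^{(2)}$ triples (e.g.\ $(2,2,3)$ at $n=7$, where $s_a+s_b>1$), so use $\frac12$ as the paper does. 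Second, $A^{(b)}_{n,(2,m-1,m)}\le 9/4$ would need $s_m\ge 1$; you can avoid this estimate altogether with the paper's observation $A^{(b)}_{n,(2,m-1,m)}/A^{(c)}_{n,(2,m-1,m)}=\frac{s_2+s_{m-1}}{s_2+s_m}<1$, which gives $\delta_nA^{(b)}_{n,(2,m-1,m)}<\Delta_n$ directly.
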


\begin{proof}
We first estimate $\Delta_n$. Recall that
\[
B_j=\sum_{(a,b,j)\in E_n^{(1)}\cup E_n^{(2)}}\theta_{n,(a,b,j)}A_{n,(a,b,j)}^{(c)},
\qquad
M_j^{(1)}=\sum_{(a,b,j)\in E_n^{(1)}}\theta_{n,(a,b,j)},
\qquad
M_j^{(2)}=\sum_{(a,b,j)\in E_n^{(2)}}\theta_{n,(a,b,j)}.
\]
Since we are in the odd case $n=2m+1\ge 7$, Lemma \ref{Lem: Counting of M_j} gives
\[
M_m^{(1)}=M_m^{(2)}=m-1.
\]
Hence
\[
 \Delta_n=B_m-(M_m^{(1)}+M_m^{(2)})\sin^2(\frac{\pi m}{n})=\sum_{(a,b,m)\in E_n^{(1)}\cup E_n^{(2)}}\theta_{n,(a,b,m)}\left( A_{n,(a,b,m)}^{(c)}-\sin^2(\frac{\pi m}{n}) \right).
\]
By \eqref{Eqn: formula of A_e^{(c)}},
\begin{equation}\label{Ineq: Lower bound of A_e^m}
A_{n,(a,b,m)}^{(c)}-\sin^2(\frac{\pi m}{n})=\cos^2(\frac{\pi m}{n})\frac{\sin^2(\frac{\pi a}{n})\sin^2(\frac{\pi b}{n})}{\sin^2(\frac{\pi a}{n})+\sin^2(\frac{\pi b}{n})}>0.
\end{equation}
Therefore,
\[
\Delta_n>0.
\]

Moreover, since $\frac{xy}{x+y}\le \frac12$ for $0<x,y<1$, we have
\[
A_{n,(a,b,m)}^{(c)}-\sin^2(\frac{\pi m}{n})\le \frac{\cos^2(\frac{\pi m}{n})}{2}.
\]
Consequently,
\[
0<\Delta_n\le \frac{\cos^2(\frac{\pi m}{n})}{2}\sum_{(a,b,m)\in E_n^{(1)}\cup E_n^{(2)}}\theta_{n,(a,b,m)}=\frac{\cos^2(\frac{\pi m}{n})}{2}(M_m^{(1)}+M_m^{(2)})=(m-1)\cos^2(\frac{\pi m}{n}) .
\]
Since $n=2m+1$,
\[
\cos^2(\frac{\pi m}{n})=\sin^2(\frac{\pi}{2n})\le \left(\frac{\pi}{2n}\right)^2.
\]
Also, the function $m\mapsto \frac{m-1}{(2m+1)^2}$ is decreasing for $m\ge 3$. Hence
\[
\Delta_n \le (m-1)\left(\frac{\pi}{2n}\right)^2 \le 2\left(\frac{\pi}{14}\right)^2 < \frac{1}{4}.
\]

To prove that $\theta_{n,(a,b,c)}^{(b)}$ and $\theta_{n,(a,b,c)}^{(c)}$ are nonnegative, it suffices to show that
\[
0\le \delta_n\le \theta_{n,(a_*,b_*,c_*)}.
\]
From the definition of $\theta_{n,(a,b,c)}$, we have $\theta_{n,(a_*,b_*,c_*)}=\theta_{n,(2,m-1,m)}\in\{1,2\}$. By \eqref{Ineq: Lower bound of A_e^m}, we have 
\[
A_{n,(a_*,b_*,c_*)}^{(c)}> \sin^2(\frac{\pi m}{n})\geq \sin^2(\frac{3\pi}{7})>\frac{1}{2}.
\]
Together with $0<\Delta_n<\frac14$, this implies
\[
0<\delta_n=\frac{\Delta_n}{A_{n,(a_*,b_*,c_*)}^{(c)}}<1\le \theta_{n,(a_*,b_*,c_*)}.
\]
Hence $\theta_{n,(a,b,c)}^{(b)},\theta_{n,(a,b,c)}^{(c)}\ge 0$ for all $(a,b,c)\in E_n^{(1)}\cup E_n^{(2)}$.\par 

We next verify the bound on $R_j$. Since only the weight $\theta_{n,(a_*,b_*,c_*)}$ for the triple $(a_*,b_*,c_*)=(2,m-1,m)$ is modified, we have
\[
  \begin{split}
    R_j&=\sum_{(a,b,j)\in E_n^{(1)}\cup E_n^{(2)}}\theta_{n,(a,b,j)}^{(c)}A_{n,(a,b,j)}^{(c)}+\sum_{(a,j,c)\in E_n^{(1)}\cup E_n^{(2)}}\theta_{n,(a,j,c)}^{(b)}A_{n,(a,j,c)}^{(b)}\\
    &=B_j-\delta_n A_{n,(a_*,b_*,c_*)}^{(c)}\mathbf 1_{\{j=m\}}+\delta_n A_{n,(a_*,b_*,c_*)}^{(b)}\mathbf 1_{\{j=m-1\}}, \qquad 1\le j\le m.
  \end{split}
\]
\noindent\textbf{Case $j=m$.}
By the definitions of $\delta_n$ and $\Delta_n$, we have
\[
R_m = B_m-\delta_n A_{n,(a_*,b_*,c_*)}^{(c)} = B_m-\Delta_n = 2(m-1)\sin^2(\frac{\pi m}{n}).
\]

\noindent\textbf{Case $1\le j\le m-2$.}
By Lemma \ref{Lem: upper bound of B_j}, we have
\[
R_j=B_j\le 2(j-1)\sin^2(\frac{\pi j}{n}).
\]

\noindent\textbf{Case $j=m-1$.}
First,
\[
\frac{A_{n,(a_*,b_*,c_*)}^{(b)}}{A_{n,(a_*,b_*,c_*)}^{(c)}}=\frac{A_{n,(2,m-1,m)}^{(b)}}{A_{n,(2,m-1,m)}^{(c)}}
=
\frac{\sin^2(\frac{2\pi}{n})+\sin^2(\frac{\pi (m-1)}{n})}{\sin^2(\frac{2\pi}{n})+\sin^2(\frac{\pi m}{n})}<1.
\]
Therefore,
\[
 R_{m-1}=B_{m-1}+\delta_n A_{n,(a_*,b_*,c_*)}^{(b)}=B_{m-1}+\frac{\Delta_n}{A_{n,(a_*,b_*,c_*)}^{(c)}} A_{n,(a_*,b_*,c_*)}^{(b)}<B_{m-1}+\Delta_n.
\]
Thus it is enough to prove that
\[
B_{m-1}+\Delta_n\le 2(m-2)\sin^2(\frac{\pi (m-1)}{n}).
\]
By \eqref{Ineq: Estimate of B_j}, we get
\[
B_{m-1}\le \sin^2(\frac{\pi (m-1)}{n})\left[(m-2)\left( 1+\frac{\cos^2(\frac{\pi (m-1)}{n})}{4} \right)+M_{m-1}^{(2)}\left( 1+\frac{\cos^2(\frac{\pi (m-1)}{n})}{2} \right) \right].
\]
Hence
\[
\begin{split}
  &2(m-2)\sin^2(\frac{\pi (m-1)}{n})-B_{m-1}\\
&\qquad \ge\sin^2(\frac{\pi (m-1)}{n})\left[2(m-2)-(m-2)\left( 1+\frac{\cos^2(\frac{\pi (m-1)}{n})}{4} \right)-M_{m-1}^{(2)}\left( 1+\frac{\cos^2(\frac{\pi (m-1)}{n})}{2} \right) \right]\\
&\qquad =\sin^2(\frac{\pi (m-1)}{n})\left[m-2-M_{m-1}^{(2)}-\frac{2M_{m-1}^{(2)}+m-2}{4}\cos^2(\frac{\pi (m-1)}{n})  \right].\\
\end{split}
\]
Since
\[
\cos^2(\frac{\pi (m-1)}{n})=\sin^2(\frac{3\pi}{2n})\le \left(\frac{3\pi}{2n}\right)^2,\quad 2M_{m-1}^{(2)}+m-2>0,\quad  \sin^2(\frac{\pi (m-1)}{n})\geq \sin^2(\frac{2\pi}{7}),
\]
we obtain
\[
2(m-2)\sin^2(\frac{\pi (m-1)}{n})-B_{m-1}> \sin^2(\frac{2\pi}{7}) \left[
m-2-M_{m-1}^{(2)} - \frac{2M_{m-1}^{(2)}+m-2}{4}
\left(\frac{3\pi}{2n}\right)^2
\right].
\]
If $n=7$, then $M_{m-1}^{(2)}=0$, so
\[
2(m-2)\sin^2(\frac{\pi(m-1)}{n})-B_{m-1}
>
\left(1-\frac{9\pi^2}{784}\right)\sin^2(\frac{2\pi}{7})
>
0.54
>
\frac14
>
\Delta_n.
\]
If $n=9$, then $M_{m-1}^{(2)}=\frac13$, so
\[
2(m-2)\sin^2(\frac{\pi(m-1)}{n})-B_{m-1}
>
\left(\frac53-\frac{\pi^2}{54}\right)\sin^2(\frac{2\pi}{7})
>
0.90
>
\frac14
>
\Delta_n.
\]
If $n=2m+1\ge 11$, then $M_{m-1}^{(2)}=m-4$, and therefore
\[
2(m-2)\sin^2\left(\frac{\pi(m-1)}{n}\right)-B_{m-1}
>
\sin^2(\frac{2\pi}{7})
\left(
2-\frac{9\pi^2(3m-10)}{16(2m+1)^2}
\right).
\]
Since for $m\geq 5$, 
\[
\frac{3m-10}{(2m+1)^2}\le \frac14,
\]
it follows that
\[
2(m-2)\sin^2\left(\frac{\pi(m-1)}{n}\right)-B_{m-1}
>
\left(2-\frac{9\pi^2}{64}\right)\sin^2(\frac{2\pi}{7})
>
0.37
>
\frac14
>
\Delta_n.
\]
Thus, in every case,
\[
2(m-2)\sin^2(\frac{\pi(m-1)}{n})-B_{m-1}>\Delta_n,
\]
and hence
\[
R_{m-1}<B_{m-1}+\Delta_n\le 2(m-2)\sin^2(\frac{\pi(m-1)}{n}).
\]
Therefore,
\[
R_j\le 2(j-1)\sin^2(\frac{\pi j}{n}),\qquad 1\le j\le m.
\]

It remains to verify the bounds on $L_{uv}$. For each fixed pair $(u,v)$ with $1\le u\le v\le m$, there is at most one triple $(a,b,c)\in E_n^{(1)}\cup E_n^{(2)}$ with $(a,b)=(u,v)$: 
\begin{itemize}
  \item In $E_n^{(1)}$, it must be $(u,v,u+v)$, provided $u+v\leq m$;
  \item In $E_n^{(2)}$, it must be $(u,v,n-u-v)$, provided $v\leq n-u-v\leq m$;
\end{itemize}
These two possibilities are mutually exclusive. Define
\[
L_{uv}^{(0)} \coloneqq \sum_{(u,v,c)\in E_n^{(1)}\cup E_n^{(2)}} \theta_{n,(u,v,c)}.
\]
Then $L_{uv}^{(0)}$ is either $0$ or the weight $\theta_{n,(a,b,c)}$ for a single triple $(a,b,c)$, and hence
\[
\begin{cases}
  L_{uv}^{(0)}\le 2
\qquad \text{ if }u<v,\\
L_{uu}^{(0)}\le 1.
\end{cases}
\]
Note that 
\[
L_{uv}
=\sum_{(u,v,c)\in E_n^{(1)}\cup E_n^{(2)}}\theta_{n,(u,v,c)}^{(c)}+\sum_{(u,b,v)\in E_n^{(1)}\cup E_n^{(2)}}\theta_{n,(u,b,v)}^{(b)}=
\begin{cases}
L_{uv}^{(0)}-\delta_n, & (u,v)=(2,m-1),\\
\delta_n, & (u,v)=(2,m),\\
L_{uv}^{(0)}, & \text{otherwise}.
\end{cases}
\]
Since $0<\delta_n<1$, we conclude that
\[
\begin{cases}
  L_{uv}\le 2
\qquad \text{ if }u<v,\\
L_{uu}\le 1.
\end{cases}
\]
Thus all hypotheses of Proposition \ref{Prop: Decomposition of theta_e, odd n} are satisfied, and the conclusion follows.
\end{proof}

\section{Proof of Lemma \ref{Lem: nonnegativity of Q_2+Q_4-Q_3} for $\mathbb{Z}_{2m}$ with $2m\geq 4$}\label{Sec: Proof of key lemma, even n}

We now treat the case $n=2m\ge 4$, following the same strategy as in Section \ref{Sec: Proof of key lemma, odd n}. Unlike the odd case, one may take the trivial decomposition $\theta_{n,(a,b,c)}^{(c)}=\theta_{n,(a,b,c)}$ and $\theta_{n,(a,b,c)}^{(b)}=0$. We therefore work directly with the original coefficients in $Q_{n,3}(x)$.

\begin{proposition}\label{Prop: Decomposition of theta_e, even n}
Let $n=2m\ge 4$. For $1\le u\le v\le m-1$, define
\[
L_{uv}\coloneqq \sum_{(u,v,c)\in E_n^{(1)}\cup E_n^{(2)}\cup E_n^{(3)}}
\theta_{n,(u,v,c)},
\]
and for $1\le j\le m$, define
\[
R_j\coloneqq \sum_{(a,b,j)\in E_n^{(1)}\cup E_n^{(2)}\cup E_n^{(3)}}
\theta_{n,(a,b,j)}A_{n,(a,b,j)}^{(c)}.
\]
Assume that
\begin{enumerate}
  \item $L_{uv}\le 2$ for all $1\le u<v\le m-1$;
  \item $L_{uu}\le 1$ for all $1\le u\le m-1$;
  \item $R_j\le 2(j-1)\sin^2(\frac{\pi j}{n})$ for all $1\le j\le m-1$;
  \item $R_m\le m-1$.
\end{enumerate}
Then
\[
Q_{n,2}(x)+Q_{n,4}(x)-Q_{n,3}(x)\ge 0,
\qquad \text{for all }x\in \mathbb{R}_+^m.
\]
\end{proposition}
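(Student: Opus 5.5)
The plan is to mimic the proof of Proposition~\ref{Prop: Decomposition of theta_e, odd n}, using the trivial decomposition $\theta^{(c)}=\theta$, $\theta^{(b)}=0$, and to keep track of where the middle coordinate $x_m$ can occur. Write $s_k=\sin^2(\frac{\pi k}{n})$ and note $s_m=\sin^2(\frac{\pi}{2})=1$. The first observation, directly from the definitions of $E_{2m}^{(1)},E_{2m}^{(2)},E_{2m}^{(3)}$, concerns which entries of a triple can equal $m$:
\begin{itemize}
\item in $E_{2m}^{(1)}$ and $E_{2m}^{(2)}$, all entries are at most $m-1$;
\item in $E_{2m}^{(3)}$, the third entry is $m$ and $a\le b\le m-1$.
\end{itemize}
Hence $x_m$ appears in $n^3Q_{n,3}(x)$ only as the singled-out third variable $x_c$, and every pair $(a,b)$ satisfies $a\le b\le m-1$. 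This is why $L_{uv}$ is only needed for $u\le v\le m-1$.

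Next, for each $(a,b,c)\in E_n$, I apply $yz\le \frac{y^2}{n}+\frac{nz^2}{4}$ with $y=(s_a+s_b)^{1/2}x_ax_b$ and $z=w_{n,(a,b,c)}x_c/(s_a+s_b)^{1/2}$, exactly as in \eqref{Ineq: estimate of theta w x for c}. This gives
\[
\theta_{n,(a,b,c)}w_{n,(a,b,c)}x_ax_bx_c\le \frac{1}{n}\theta_{n,(a,b,c)}(s_a+s_b)x_a^2x_b^2+n\theta_{n,(a,b,c)}A^{(c)}_{n,(a,b,c)}x_c^2 .
\]
Summing over $E_n$ and grouping by the pair $(a,b)=(u,v)$ and by $c=j$ yields
\[
n^3Q_{n,3}(x)\le \frac1n\sum_{1\le u\le v\le m-1}L_{uv}(s_u+s_v)x_u^2x_v^2+n\sum_{j=1}^{m}R_jx_j^2 .
\]

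For the quartic part, hypotheses (1) and (2) bound the first sum by
\[
\frac1n\sum_{u,v=1}^{m-1}(s_u+s_v)x_u^2x_v^2=\frac2n\Big(\sum_{k<m}x_k^2\Big)\Big(\sum_{k<m}s_kx_k^2\Big).
\]
This is at most $\frac2n\big(\sum_{k<m}x_k^2+\tfrac12x_m^2\big)\big(\sum_{k<m}s_kx_k^2+\tfrac12x_m^2\big)=n^3Q_{n,4}(x)$, because expanding the right-hand product only adds the nonnegative terms involving $x_m^2$.

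For the quadratic part, hypothesis (3) gives $\sum_{j<m}R_jx_j^2\le 2\sum_{j<m}(j-1)s_jx_j^2$, and hypothesis (4) gives $R_mx_m^2\le (m-1)x_m^2$. Together these give $n\sum_jR_jx_j^2\le n\cdot n^2Q_{n,2}(x)$, since the definition of $Q_{2m,2}$ carries precisely the coefficient $(m-1)/(2m)^2$ on $x_m^2$. Dividing by $n^3$ then gives $Q_{n,3}\le Q_{n,2}+Q_{n,4}$.

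I expect no deep obstacle; the only delicate point is the bookkeeping for the middle frequency $m$. Specifically, one must verify that $x_m$ never enters a pair $(u,v)$, so that the factors $\tfrac12$ in $Q_{2m,4}$ are never needed on the quartic side. One must also check that the coefficient of $x_m^2$ in $Q_{2m,2}$ matches the threshold $m-1$ in hypothesis (4).
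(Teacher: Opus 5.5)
Your proposal is correct and follows essentially the same argument as the paper: you apply the termwise bound $yz\le \frac{y^2}{n}+\frac{nz^2}{4}$ with the whole weight placed on the third entry, group by pairs $(u,v)$ with $u\le v\le m-1$ and by $c=j$, absorb the quartic part into $n^4Q_{n,4}$ after discarding the nonnegative $x_m$-terms, and absorb the quadratic part into $n^2Q_{n,2}$ via hypotheses (3) and (4). Your observation that the middle frequency $m$ enters only as the singled-out third entry of triples in $E_n^{(3)}$ is the same bookkeeping the paper's proof relies on.
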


\begin{proof}
We have
\[
n^3Q_{n,3}(x)=\sum_{(a,b,c)\in E_n^{(1)}\cup E_n^{(2)}\cup E_n^{(3)}}\theta_{n,(a,b,c)}w_{n,(a,b,c)}x_ax_bx_c.
\]
Fix $(a,b,c)\in E_n^{(1)}\cup E_n^{(2)}\cup E_n^{(3)}$. By the inequality $yz\le \frac{y^2}{n}+\frac{nz^2}{4}$, we obtain
\begin{equation}\label{Ineq: estimate of theta w x for c, even n}
\theta_{n,(a,b,c)}w_{n,(a,b,c)}x_ax_bx_c\le \frac{1}{n}\theta_{n,(a,b,c)}\left(\sin^2(\frac{\pi a}{n})+\sin^2(\frac{\pi b}{n})\right)x_a^2x_b^2+n\theta_{n,(a,b,c)}A_{n,(a,b,c)}^{(c)}x_c^2.
\end{equation}
Fixing $(a,b)=(u,v)$ and summing the first term on the right-hand side of \eqref{Ineq: estimate of theta w x for c, even n} over the third index $c$, we obtain
\[
\sum_{(u,v,c)\in E_n^{(1)}\cup E_n^{(2)}\cup E_n^{(3)}}\frac{1}{n}\theta_{n,(u,v,c)}\left(\sin^2(\frac{\pi u}{n})+\sin^2(\frac{\pi v}{n}) \right)x_u^2x_v^2=\frac{1}{n}L_{uv}\left(\sin^2(\frac{\pi u}{n})+\sin^2(\frac{\pi v}{n})\right)x_u^2x_v^2.
\]
Similarly, fixing $c=j$ and summing the second term on the right-hand side of \eqref{Ineq: estimate of theta w x for c, even n} over the first two indices $(a,b)$, we obtain
\[
\sum_{(a,b,j)\in E_n^{(1)}\cup E_n^{(2)}\cup E_n^{(3)}}n\theta_{n,(a,b,j)}A_{n,(a,b,j)}^{(c)}x_c^2 = nR_jx_j^2.
\]
Therefore,
\[
n^3Q_{n,3}(x)\le \frac{1}{n}\sum_{1\le u\le v\le m-1}L_{uv}\left(\sin^2(\frac{\pi u}{n})+\sin^2(\frac{\pi v}{n})\right)x_u^2x_v^2+n\sum_{j=1}^{m}R_jx_j^2.
\]
By assumptions (1) and (2),
\[
  \begin{split}
    &\sum_{1\le u\le v\le m-1}L_{uv}\left(\sin^2(\frac{\pi u}{n})+\sin^2(\frac{\pi v}{n})\right)x_u^2x_v^2\\
    &\qquad=\sum_{u=1}^{m-1} 2L_{uu}\sin^2(\frac{\pi u}{n}) x_u^4+\sum_{1\le u<v\le m-1}L_{uv}\left(\sin^2(\frac{\pi u}{n})+\sin^2(\frac{\pi v}{n})\right)x_u^2x_v^2\\
&\qquad\le\sum_{u=1}^{m-1} 2\sin^2(\frac{\pi u}{n}) x_u^4+\sum_{1\le u<v\le m-1}2\left(\sin^2(\frac{\pi u}{n})+\sin^2(\frac{\pi v}{n})\right)x_u^2x_v^2\\
&\qquad=\sum_{u=1}^{m-1}\sum_{v=1}^{m-1} \left(\sin^2(\frac{\pi u}{n})+\sin^2(\frac{\pi v}{n})\right)x_u^2x_v^2\\
&\qquad=2\left( \sum_{v=1}^{m-1} x_v^2 \right)\left( \sum_{u=1}^{m-1}\sin^2(\frac{\pi u}{n})x_u^2 \right).
  \end{split}
\]
On the other hand,
\[
\begin{aligned}
n^4Q_{n,4}(x)
&=
2\left(\sum_{k=1}^{m-1}x_k^2+\frac{1}{2}x_m^2\right)
\left(\sum_{k=1}^{m-1}\sin^2(\frac{\pi k}{n})x_k^2+\frac{1}{2}x_m^2\right)\\
&=
2\left(\sum_{k=1}^{m-1}x_k^2\right)
\left(\sum_{k=1}^{m-1}\sin^2(\frac{\pi k}{n})x_k^2\right)
+x_m^2\sum_{k=1}^{m-1}x_k^2+x_m^2\sum_{k=1}^{m-1}\sin^2(\frac{\pi k}{n})x_k^2
+\frac{1}{2}x_m^4.
\end{aligned}
\]
Hence
\[
\sum_{1\le u\le v\le m-1}L_{uv}\left(\sin^2(\frac{\pi u}{n})+\sin^2(\frac{\pi v}{n})\right)x_u^2x_v^2\le n^4Q_{n,4}(x).
\]
By assumptions (3) and (4), we have
\[
\sum_{j=1}^m R_jx_j^2\le 2\sum_{j=1}^{m-1}(j-1)\sin^2(\frac{\pi j}{n})x_j^2
+(m-1)x_m^2=n^2Q_{n,2}(x).
\]
Therefore, for all $x\in\mathbb{R}_+^m$,
\[
n^3Q_{n,3}(x)\le n^3Q_{n,4}(x)+n^3Q_{n,2}(x).
\]
This proves the proposition.
\end{proof}
Now we can conclude Lemma \ref{Lem: nonnegativity of Q_2+Q_4-Q_3} in the even case $n=2m\geq 4$ from the following theorem.
\begin{theorem}[Trivial choice of $\theta_{n,(a,b,c)}^{(b)}$ and $\theta_{n,(a,b,c)}^{(c)}$ for even $n\ge 4$]\label{Thm: Choice of theta for even n geq 4}
For every even $n=2m\ge 4$, the hypotheses of Proposition \ref{Prop: Decomposition of theta_e, even n} are satisfied. Consequently,
\[
Q_{n,2}(x)+Q_{n,4}(x)-Q_{n,3}(x)\ge 0,\qquad \text{for all }x\in \mathbb{R}_+^m.
\]
\end{theorem}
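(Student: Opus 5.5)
With the trivial decomposition $\theta^{(c)}_{n,(a,b,c)}=\theta_{n,(a,b,c)}$, $\theta^{(b)}_{n,(a,b,c)}=0$ already built into Proposition~\ref{Prop: Decomposition of theta_e, even n}, I plan to check hypotheses (1)--(4) of that proposition one at a time. The conclusion then follows directly.

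For (1) and (2), fix $1\le u\le v\le m-1$. I will show that at most one triple in $E_n^{(1)}\cup E_n^{(2)}\cup E_n^{(3)}$ has first two entries $(u,v)$, by reading off the third entry in each case.
\begin{itemize}
\item In $E_n^{(1)}$ the third entry must be $c=u+v$, which requires $u+v\le m-1$.
\item In $E_n^{(3)}$ the third entry is $c=m$, which requires $u+v=m$.
\item In $E_n^{(2)}$ the third entry must be $c=2m-u-v$, and the constraint $c\le m-1$ forces $u+v\ge m+1$.
\end{itemize}
These three conditions on $u+v$ are mutually exclusive. Hence $L_{uv}$ is either $0$ or a single weight $\theta_{n,(u,v,c)}$. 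By Lemma~\ref{Lem: combinatorial description of summation indices}, such a weight is at most $2$ when $u<v$ and equals $1$ when $u=v$, since then $N=3$ in every admissible case. This gives (1) and (2).

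For (3), take $1\le j\le m-1$. No triple of $E_n^{(3)}$ has third entry $j$, so $R_j$ coincides with the quantity $B_j$ defined before Lemma~\ref{Lem: upper bound of B_j}. Since $m-1=\floor{n/2}-1$, that lemma gives exactly $R_j=B_j\le 2(j-1)\sin^2(\frac{\pi j}{n})$. The degenerate case $j=1$ is trivial, because there are no triples at all and $R_1=0$.

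For (4), the triples with third entry $m$ lie only in $E_n^{(3)}$, because $E_n^{(1)}$ and $E_n^{(2)}$ require $c\le m-1$. For $(a,b,m)\in E_n^{(3)}$ we have $a+b=m$, so $\sin^2(\frac{\pi c}{n})=\sin^2(\frac{\pi(a+b)}{n})$. The derivation of \eqref{Eqn: formula of A_e^{(c)}} uses only this identity, so it applies verbatim and gives
\[
A^{(c)}_{n,(a,b,m)}=\sin^2\Bigl(\frac{\pi}{2}\Bigr)+\cos^2\Bigl(\frac{\pi}{2}\Bigr)\cdot(\cdots)=1.
\]
Therefore $R_m=M_m^{(3)}$, and Lemma~\ref{Lem: Counting of M_j} gives $R_m=m-1$, so (4) holds with equality.

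I expect no real obstacle beyond this bookkeeping. The points needing care are the exclusivity of the three cases in (1)--(2), and confirming that formula \eqref{Eqn: formula of A_e^{(c)}} applies to $E_n^{(3)}$ triples. I would also sanity-check $n=4$, where $E_4=\{(1,1,2)\}$: there $R_1=0$, $L_{11}=0$ and $R_2=1=m-1$.
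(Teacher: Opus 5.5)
Your proposal is correct and follows the paper's proof essentially step by step. Both use the same mutually exclusive split on $u+v$ to bound $L_{uv}$, the identification $R_j=B_j$ together with Lemma~\ref{Lem: upper bound of B_j} for $1\le j\le m-1$, and $A^{(c)}_{n,(a,b,m)}=1$ with $M_m^{(3)}=m-1$ for $R_m$. The only difference is that you obtain $A^{(c)}=1$ from \eqref{Eqn: formula of A_e^{(c)}} using $\cos(\frac{\pi}{2})=0$, while the paper uses $\sin^2(\frac{\pi a}{n})+\sin^2(\frac{\pi b}{n})=1$.

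Two small slips, neither affecting validity. First, when $u=v$ the weight need not equal $1$: $(u,u,u)\in E_n^{(2)}$ when $3u=n$ (e.g.\ $(2,2,2)$ for $n=6$) has $\theta=\frac13$, so only $L_{uu}\le 1$ holds. Second, in your $n=4$ check, $L_{11}=\theta_{4,(1,1,2)}=1$, not $0$, since $(1,1,2)\in E_4^{(3)}$.
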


\begin{proof}
We verify the hypotheses of Proposition \ref{Prop: Decomposition of theta_e, even n}.

\noindent\textbf{Case $j=m$.}
If $(a,b,m)\in E_n^{(3)}$, then $a+b=m$. Since $n=2m$, we have
\[
\sin^2(\frac{\pi a}{n})+\sin^2(\frac{\pi b}{n})=\sin^2(\frac{\pi a}{2m})+\sin^2(\frac{\pi (m-a)}{2m})=1.
\]
Hence
\[
A_{n,(a,b,c)}^{(c)}=\frac{(\sin^2(\frac{\pi a}{n})+\sin^2(\frac{\pi b}{n})+\sin^2(\frac{\pi m}{n}))^2}{4\left(\sin^2(\frac{\pi a}{n})+\sin^2(\frac{\pi b}{n})\right)}=\frac{(1+1)^2}{4}=1.
\]
Since triples in $E_n^{(1)}\cup E_n^{(2)}$ satisfy $c\le m-1$, only triples in $E_n^{(3)}$ contribute to $R_m$. Therefore, by Lemma \ref{Lem: Counting of M_j},
\[
R_m=\sum_{(a,b,m)\in E_n^{(3)}}\theta_{n,(a,b,m)}A_{n,(a,b,m)}^{(c)}=\sum_{(a,b,m)\in E_n^{(3)}}\theta_{n,(a,b,m)}=M_m^{(3)}=m-1.
\]

\noindent\textbf{Case $1\le j\le m-1$.}
Since every triple in $E_n^{(3)}$ has $c=m$, we have
\[
R_j=\sum_{(a,b,j)\in E_n^{(1)}\cup E_n^{(2)}}\theta_{n,(a,b,j)}A_{n,(a,b,j)}^{(c)}=B_j.
\]
By Lemma \ref{Lem: upper bound of B_j}, it follows that
\[
R_j=B_j\le 2(j-1)\sin^2(\frac{\pi j}{n}),\qquad 1\le j\le m-1.
\]

It remains to verify the bounds on $L_{uv}$. For each fixed pair $(u,v)$ with $1\le u\le v\le m-1$, there is at most one triple $(a,b,c)\in E_n^{(1)}\cup E_n^{(2)}\cup E_n^{(3)}$ with $(a,b)=(u,v)$:
\begin{itemize}
  \item In $E_n^{(1)}$, it must be $(u,v,u+v)$, provided $u+v\le m-1$;
  \item In $E_n^{(3)}$, it must be $(u,v,m)$, provided $u+v=m$;
  \item In $E_n^{(2)}$, it must be $(u,v,n-u-v)$, provided $v\le n-u-v\le m-1$.
\end{itemize}
These possibilities are mutually exclusive. Hence $L_{uv}$ is either $0$ or the weight $\theta_{n,(a,b,c)}$ for a single triple $(a,b,c)$. Therefore,
\[
\begin{cases}
  L_{uv}\le 2
\qquad \text{ if }u<v,\\
L_{uu}\le 1.
\end{cases}
\]
Thus all hypotheses of Proposition \ref{Prop: Decomposition of theta_e, even n} are satisfied, and the conclusion follows.
\end{proof}

\begin{remark}
Although the direct proof above settles the even case, one can also recover it from previously known results in \cite{yao2025optimalhypercontractivitylogsobolevinequalities} and the odd case. Every even integer $n\ge 4$ can be written as $n=n_0\cdot 2^k$ with $k\ge 1$ and $n_0$ odd. The cases $n_0=1$ and $n_0=3$, that is, $n\in\{2^k,3\cdot 2^k\}$, were proved in \cite{yao2025optimalhypercontractivitylogsobolevinequalities}. For odd $n_0\ge 5$, the odd case established in Section~\ref{Sec: Proof of key lemma, odd n}, together with \cite[Theorem 4.2]{yao2025optimalhypercontractivitylogsobolevinequalities}, yields the corresponding even cases $n=n_0\cdot 2^k$. Thus all even integers $n\ge 4$ can also be covered by this indirect argument.
\end{remark}

\section*{Acknowledgments}
The author would like to thank Professor Quanhua Xu and Professor Simeng Wang for their patience and encouragement, as well as for their careful reading of the manuscript and many helpful discussions. The author is partially supported by the NSF of China (No. 12031004, No. W2441002, No. 12301161, No.12371138).

\printbibliography
\addresseshere
\end{document}